\newdimen\abovetopsep
\newtheorem{thm}{Theorem}
\numberwithin{thm}{section}
\newtheorem{lem}[thm]{Lemma}
\newtheorem{prop}[thm]{Proposition}
\theoremstyle{remark}
\newtheorem*{rem}{Remark}
\newtheorem{remn}[thm]{Remark}
\numberwithin{equation}{section}
\begin{document}
\title{The domino shuffling height process and its hydrodynamic limit}
\author{Xufan Zhang\\ Brown University\\ Providence, RI 02912, USA }
\date{}

\maketitle
\begin{abstract}
The famous domino shuffling algorithm was invented to generate the domino tilings of the Aztec Diamond. Using the domino height function, we view the domino shuffling procedure as a discrete-time random height process on the plane. The hydrodynamic limit from an arbitrary continuous profile is deduced to be the unique viscosity solution of a Hamilton-Jacobi equation $u_t+H(u_x)=0$, where the determinant of the Hessian of $H$ is negative everywhere. The proof involves interpolation of the discrete process and analysis of the limiting semigroup of the evolution. In order to identify the limit, we use the theories of  dimer models as well as Hamilton-Jacobi equations.

It seems that our result is the first example in $d>1$ where such a full hydrodynamic limit with a nonconvex Hamiltonian can be obtained for a discrete system. We also define the shuffling height process for more general periodic dimer models, where we expect similar results to hold.
\end{abstract}
\tableofcontents
\section{Introduction}

The dimer model, which consists of the weighted perfect matchings on graphs, is a well-studied model in mathematical physics. The domino model  is the dimer model on a (possibly infinite) region of the $\mbb Z^2$ lattice. Alternatively, it can be thought of as tiling a region of the square grid exactly by $1\times 2$ and $2\times 1$ dominoes. For a comprehensive survey on the subject of dimer models, see \cite{RK09}. Necessary knowledge will be reviewed in the paper.

The domino shuffling is a discrete-time random dynamics operating on the domino model, introduced originally by Elkies et al. in \cite{EKLP92} to compute the generating function of domino tilings of a specific family of graphs called Aztec Diamonds with periodic weights. In particular, the domino shuffling provides a simple algorithm to generate the uniform measure of tilings of Aztec Diamonds and led to the first rigorous proof of the famous Artic Circle Theorem by Jockusch et al.(\cite{JPS98}), that describes the asymptotic shape of the central sub-region of a random tiling.

The domino shuffling seemed quite mysterious the way it was presented in \cite{EKLP92} initially. Propp(\cite{JP03}) later gave a much clearer explanation using a procedure called urban renewal, or spider move, which also allows natural generalizations to different graphs. This is the approach we will take to define the shuffling dynamics in Section~\ref{sec:setup}. As it turns out, this also naturally converts the shuffling dynamics into a random height process, which we shall call the {\it shuffling height process}. The height process is a $(2+1)$-dimensional evolution which is discrete in both space (2-dimensional) and time (1-dimensional). The purpose of this paper is to prove that, when rescaling both space and time parameters by $n$, and starting nearby an arbitrary continuous profile (subject to certain legality requirement), as $n\rar \infty$, the height process evolves according to a first-order, nonlinear Hamilton-Jacobi equation
\ba{u_t+H(u_x)=0 \label{intropde}}
where, in the case of uniform shuffling, the Hamiltonian is given by
\al{
H(\rho_1,\rho_2)=\fr4{\pi}\cos^{-1}\lp \fr{1}{2}\cos\lp\fr{\pi \rho_2}2\rp - \fr{1}{2}\cos\lp\fr{\pi \rho_1}2\rp\rp.
}
The convergence is uniform in any compact subset of the spacetime. A subtlety is that the PDE develops shocks even starting from a smooth profile, so we need to consider a specific weak solution called the viscosity solution.

Before sketching the proof, we would like to mention some other works on domino shuffling. Johansson(\cite{KJ05}) and Nordenstam(\cite{EN10}) related the domino shuffling on the Aztec Diamond to a determinantal point process. As a result, they were able to prove that, under appropriate rescaling, the boundary of the arctic circle converges to the Airy process, and the turning point converges to the GUE minor process. In 2014, Borodin and Ferrari(\cite{BF14}) pushed this idea further, where several different $(2+1)$-dimensional interacting particle systems and random tiling models were connected through systems of non-intersecting lines. These models are believed to belong to the Anisotropic Kardar–Parisi–Zhang (AKPZ) universality class, which means that the speed function $H$ in the hydrodynamic limit \eqref{intropde} has the property that the determinant of its Hessian is negative. The domino shuffling dynamics is in particular one of them, with the connection explained by the same authors(\cite{BF15}) later in more detail. Recently, Chhita and Toninelli(\cite{CT18}) analyzed the speed and fluctuation of domino shuffling on the 2-periodic $\mbb Z^2$ lattice, and demonstrated a ``rigid'' stationary state where the fluctuation is $O(1)$.

Many of the works above focus on a specific type of region or initial condition. In terms of hydrodynamic limits starting from a more general profile, the first rigorous result in the context above was obtained in 2017 by Legras and Toninelli(\cite{FT17},\cite{LT17}). They analyzed another stochastic interface growth model from \cite{BF14}, which can be viewed as a continuous-time dynamics on lozenge tilings (the dimer model on the hexagonal lattice). In this case, different from the domino shuffling dynamics, updates at a point can depend on information arbitrarily far away, and the speed function is unbounded. As a result, the hydrodynamic limit is proved either up to the first shock time or when the initial profile is convex.

We also want to mention some background in hydrodynamic limit theory. One general approach to the hydrodynamic limit of discrete systems is to first make an educated guess about the form of the limit based on a local-equilibrium heuristic, that is, assuming the system is locally at equilibrium almost everywhere for all time. This often leads to an explicit PDE, which serves as a guide. When the PDE theory provides a characterization of a unique (weak) solution, we can try to adapt the form of the solution to the discrete system. For example, when we are dealing with the symmetric nearest neighbor simple exclusion process, as discussed in Chapter 4 of the classical reference \cite{KC13}, the hydrodynamic limit is expected to be the heat equation, whose unique solution can be characterized by an integral equation. Therefore, one wants to show that, starting from a particular initial profile,  the Riemann sum based on the empirical measure of the discrete system converges to an integral. This then becomes a classical problem in probability of showing the convergence of measures. First, one uses Prokhorov's theorem to show that every sequence of measures has a subsequential limit. Then, one uses specific knowledge about the discrete system to prove that any subsequential limit must agree with the desired integral equation, in this case using martingale techniques.

In the case when the  expected PDE is a Hamilton-Jacobi equation, the PDE theory is more complicated. When the speed function $H$ is convex or the initial profile is convex, the unique viscosity solution can be written down in a variational form, using either the Hopf-Lax formula or Hopf formula (\cite{LE14}). Certain exclusion processes do have such a PDE as the limit, and one  proof strategy consists of finding  a corresponding microscopic variational formula for the discrete system and showing the convergence of this formula to the continuous one. See, for example, the works by Sepp{\"a}l{\"a}inen(\cite{TS99}) and Rezakhanlou(\cite{FR02}). Also the Hopf formula is used in \cite{LT17} to prove the limit starting from a convex profile.

However, since  the AKPZ property exactly means that the speed function $H$ is neither convex nor concave, an explicit variational formula for the unique solution is not available. (Evans(\cite{LE14}) gave a general representation formula, but it is not clear how to relate it to these discrete systems.) The seminal work \cite{FR01} of Rezakhanlou in 2001 provided an approach in the context of certain continuous-time exclusion processes. Just as in the case of simple exclusion processes, one would like to prove the convergence of empirical measures. However, the unique viscosity solution of the Hamilton-Jacobi equation has a peculiar characterization. It involves comparing the current solution to a family of arbitrary smooth functions at all spacetime locations. Therefore, the empirical measures which we want to demonstrate convergence of need to encode the evolution from not just one initial condition, but all possible initial conditions starting from an arbitrary time, ie. as a discrete ``semigroup''. A priori, to encode this much information, the space of the resulting probability measures would be too large, i.e. inseparable, to apply Prokhorov's theorem. The key observation of Rezakhanlou is that, if the discrete system satisfies certain properties, the space of probability measures can be made separable. In \cite{FR01}, the full hydrodynamic limit of a family of exclusion processes in $d=1$ was established, with a nonexplicit Hamiltonian. It seems that \emph{our result is the first example in $d>1$ where such a full hydrodynamic limit with a nonconvex Hamiltonian has been obtained for a discrete system.} 

Another issue is that the evolution of the discrete system needs to be properly interpolated to be comparable to the evolution of the PDE, and more importantly, to keep the space of probability measures separable.  We carry out the interpolation of the domino shuffling in Section~\ref{sec:interpo}. The interpolation in \cite{FR01} is straightforward, but it takes extra work in our case due to the differences of the models. A convenience for us, however, is that the topology can be taken to be the uniform topology, instead of the Skorohod topology. This makes the argument more transparent.

In Section~\ref{sec:em}, utilizing the dimer theory, we identify the Gibbs measures of domino tilings as equilibrium measures of the shuffling process, and deduce the hydrodynamic limit starting from a flat initial condition. (Notice that we do not need the uniqueness of the dimer Gibbs measures.) This allows us to determine the full hydrodynamic limit in Section~\ref{sec:vs}. While using the general theory of viscosity solutions, we have to take care of the boundedness of the spatial gradient, imposed by our model.

The rest of the paper is outlined as follows. In Section~\ref{sec:setup}, we will provide some background information about the dimer model, and the dimer shuffling height process is defined in a general manner. We also establish a list of lemmas that are useful later. The specific set-up for the remainder of the paper and the precise statement of the theorem are presented in Section~\ref{sec:mr}. 
In Section~\ref{sec:lp}, we apply  Prokhorov's theorem and the generalized Arzel\`a-Ascoli theorem to deduce the precompactness of the sequence of the empirical measures on discrete ``semigroups'' and also prove some additional properties about the subsequential limits to be used later. In particular, the limits are bona fide semigroups.
Section~\ref{sec:comm} briefly discusses some other examples of the dimer shuffling process and possible extensions.

\section{General setup}\label{sec:setup}

\subsection{Dimer model on a periodic bipartite graph}

To start with, consider a $\mbb Z^2$-periodic bipartite graph $G=(V,E)$ embedded in the plane, where the vertices in each fundamental domain are colored  black and white in a particular way, such that the whole graph is invariant under the natural $\mbb Z^2$-translation action $T$ matching fundamental domains. One primary example will be the graph shown in Figure~\ref{fig:Z2}. Also define $G_n:=G/(n\mbb Z)^2$ as the quotient graph embedded on a torus. 

\begin{figure}[h]
\caption{The bipartite graph with vertices $\mbb Z^2$ and one fundamental domain drawn. The vertices in this figure should be considered lying on the dual lattice of Figure~\ref{fig:dominoheight}.}
\includegraphics[scale=1]{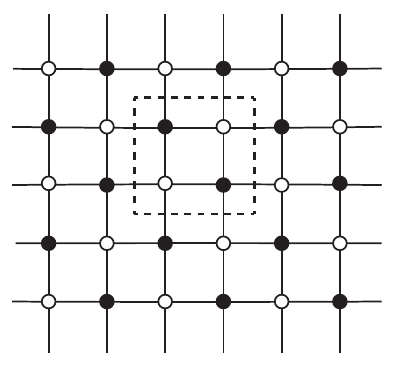}
\centering
\label{fig:Z2}
\end{figure}

A {\it dimer covering} on a bipartite graph is a subset of the edges $E$ that form a perfect matching among the vertices $V$. A chosen edge is called a {\it dimer}. We assign a nonnegative weight $w(e)$ to each edge $e$ invariant under $T$. Then on finite graphs $G_1$ and $G_n$, we can define a Boltzmann probability measure on the set of dimer coverings $\mcl M$:
\al{
\mu[M\in \mcl M]=\fr1Z\prod_{e\in M}w(e)
}
where $Z=\sum_{M\in \mcl M}\prod_{e\in M}w(e)$ is the partition function. Notice that the measure is invariant under a {\it gauge transformation}, which means multiplying all edge weights incident to a vertex by a positive constant.

This definition of course does not make sense on $G$, but we can always take a sequence of Boltzmann measures on $G_n$ with $n\rar \infty$, and any weakly convergent subsequence (which is guaranteed to exist by Prokhorov's theorem) will yield a limiting Gibbs measure on $G$, whose finite dimensional distributions are the limit along the convergent subsequence.

\subsection{Height function}\label{sec:height}

A {\it flow} on a graph is an assignment of real numbers to the directed edges,  such that edges with opposite directions are assigned opposite numbers. Given a dimer covering $M$ on $G$, we can think of it as a white-to-black flow $[M]$, where each white-to-black edge is assigned either 1 or 0. If we fix some reference covering $M_0$, then $[M]-[M_0]$ is a divergence-free flow (the net flow into each vertex is equal to 0), which induces a gradient flow on the dual graph. In other words, we can attribute a {\it height function} $h_M$ defined  on the faces to the covering $M$, by first stipulating that one base face has height 0, and assigning neighboring faces their heights as follows. When we cross an edge, the height will increase by the net amount of flow on that edge from left to right.

This height function $h_M$ is well defined on a planar graph up to an additive constant, and it is clear that, given the reference covering, we can recover the dimer covering $M$ from its height function $h_M$. If the graph is embedded on a torus, such as $G_n$ defined above, $h_M$ is still well-defined locally, but is treated as a multi-valued function globally. Suppose $h_M$ increases by $x$ as we move towards the right once around the torus back to the same face, and increases by $y$ as we move up once around the torus, we say that $h_M$ or the covering $M$ itself has {\it height change} $(x,y)$. Since $h_M$ is well defined locally, $(x,y)$ does not depend on the choice of cycles, as long as they have homology $(1,0)$ and $(0,1)$ respectively.

The set of all possible height changes on $G_1:=G/\mbb Z^2$ plays an important role in dimer theory. Their convex hull is called the {\it Newton polygon} associated to $G$, which, roughly speaking, contains exactly all possible ``slopes'' on $G$ (\cite{KOS06}). 

A different reference covering $M_0'$ will define a different height function for $M$. The difference is determined by $[M_0]-[M_0']$, which is independent of the covering $M$ of interest. Therefore, given two coverings $M$ and $M'$, the function $h_M-h_{M'}$ does not depend on the choice of reference covering.

A nice property about these height functions is that they have a lattice structure by taking pointwise maximum or minimum. This fact  was briefly mentioned in \cite{CKP01} in the context of domino tilings. Here we give a proof in the above setting.
\begin{lem}[Lattice property]\label{lem:lattice}
Fixing a reference covering $M_0$ on $G$, if $h_M$ and $h_{M'}$  are both height functions with integer values, then both $h_M\wedge h_{M'}$ and $h_M\vee h_{M'}$ are dimer height functions, where $\wedge$ denotes pointwise minimum  and $\vee$ denotes pointwise maximum. 
\end{lem}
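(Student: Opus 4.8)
The plan is to reduce the statement to a purely local, combinatorial check at each vertex. The key point is that a function $h$ on the faces of $G$ is a dimer height function (relative to the fixed reference $M_0$) if and only if it arises from some dimer covering $M$; equivalently, the flow $f := [M] - [M_0]$ it encodes must be divergence-free with $[M_0] + f$ the indicator flow of an actual perfect matching. So I would first translate "$h_M$ is a height function" into the edge-local conditions on the associated white-to-black flow: across each edge $e$ the height increment $h(\text{right}) - h(\text{left})$ equals $[M](e) - [M_0](e) \in \{1 - [M_0](e), -[M_0](e)\}$, and around each vertex the dimer constraint says exactly one incident edge is a dimer. The upshot is a characterization: $h$ (with integer values, agreeing with the reference normalization) is a dimer height function iff, reading off the edge increments, at every white vertex exactly one incident edge carries the "dimer" value and likewise at every black vertex — and this can be phrased as: for each vertex $v$, the increments of $h$ around the faces adjacent to $v$ realize one of the admissible local patterns.

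Next I would verify that this local admissibility condition is preserved under pointwise $\wedge$ and $\vee$. Fix a vertex $v$ and consider $h_M, h_{M'}$. Since the dimer covering near $v$ is determined by which of the faces around $v$ are "high" versus "low" in the relevant sense, I expect to show that on the faces around any single vertex, $h_M \wedge h_{M'}$ locally coincides with whichever of $h_M$, $h_{M'}$ is the pointwise smaller one on that cluster of faces — or, in the mixed case, that the min still forms an admissible pattern. Concretely: near $v$, a height function is constant except possibly jumping across the dimer edge; comparing the two functions, one checks case by case (using that both are already admissible and differ by an additive constant globally only up to the reference) that taking the minimum either selects one of them wholesale near $v$ or produces another admissible configuration. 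The same argument with inequalities reversed handles $\vee$; alternatively, once $\wedge$ is done, $\vee$ follows since $h_M \vee h_{M'} = h_M + h_{M'} - (h_M \wedge h_{M'})$, and one checks this still satisfies the local constraints (or by symmetry of the argument).

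The main obstacle is the mixed case at a vertex $v$ where neither $h_M$ nor $h_{M'}$ dominates the other on all faces surrounding $v$: I need to rule out that $h_M \wedge h_{M'}$ creates an "illegal" local picture (e.g. zero or two dimers at $v$). This is where the integer-valuedness and the bipartite structure are essential — the discrete height gaps across an edge are constrained, so a genuinely intertwined min cannot occur without one of the originals already violating admissibility. I would organize this as a finite case analysis over the possible local increment patterns around a white vertex (and dually a black vertex), which for the $\mathbb{Z}^2$ lattice of Figure~\ref{fig:Z2} is a short enumeration; for the general periodic bipartite $G$ the same principle applies vertex-type by vertex-type. Finally I would note that $h_M \wedge h_{M'}$ and $h_M \vee h_{M'}$ are automatically integer-valued and, being locally admissible everywhere, correspond to genuine dimer coverings, which completes the proof.
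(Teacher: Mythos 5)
Your plan follows essentially the same route as the paper: characterize dimer height functions by local edge/vertex admissibility and check that pointwise min and max preserve it. You have correctly identified both the characterization (increments across an edge confined to two values determined by $[M_0]$, exactly one dimer per vertex) and the main obstacle (the "mixed" case where neither function dominates the other on the faces around a vertex). The by-symmetry handling of $\vee$ is also fine; the alternative you float, deducing $\vee$ from $\wedge$ via $h_M \vee h_{M'} = h_M + h_{M'} - (h_M\wedge h_{M'})$, would not work directly, since sums and differences of height functions are not height functions.

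The gap is that the decisive step is only asserted: you say the mixed case is handled by "a finite case analysis over the possible local increment patterns," but that case analysis \emph{is} the proof, and as stated it is not organized in a way that obviously terminates (the patterns depend on the local graph, and you also need to rule out failures along a single edge, not just around a vertex). The paper's key observation, which you should isolate, is a single impossible crossing configuration: there is no edge $e$ between faces $f_1,f_2$ with $g(f_1)=h_M(f_1)>h_{M'}(f_1)$ and $g(f_2)=h_{M'}(f_2)>h_M(f_2)$ for $g=h_M\vee h_{M'}$. This is killed by a short chain of inequalities using only the two allowed increments across $e$ (e.g.\ if $e\in M_0$ then $h_M(f_2)\ge h_M(f_1)-1\ge h_{M'}(f_1)\ge h_{M'}(f_2)$, a contradiction). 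Once that is in hand, every possible failure of $g$ — an illegal increment across an edge, four or more sign changes around a vertex, or exactly two changes neither of which crosses the reference dimer edge $e_0$ at that vertex — reduces to this one forbidden configuration (the last case needs the extra observation that a height function changing twice around $v$ must change across $e_0$, forcing both $h_M$ and $h_{M'}$, hence $g$, to change there). Reorganizing your case analysis around this single lemma is what turns your outline into a complete proof.
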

\begin{proof}
WLOG, suppose $g=h_M\vee h_{M'}$. We first prove by contradiction that the following scenario can never happen: there is be an edge $e$ separating two faces $f_1$ and $f_2$ such that $g(f_1)=h_M(f_1)>h_{M'}(f_1)$ and $g(f_2)=h_{M'}(f_2)>h_M(f_2)$. Let us assume that when we cross $e$ from $f_1$ to $f_2$, the white vertex is on the left. If $e\in M_0$, a dimer height function either stays the same or decreases by 1 going from $f_1$ to $f_2$. Then we must have $h_M(f_2)\geq h_M(f_1)-1\geq h_{M'}(f_1)\geq h_{M'}(f_2)$, a contradiction. Similarly, if $e\notin M_0$, a dimer height function either stays the same or increases by 1 going from $f_1$ to $f_2$. Then $h_{M'}(f_1)\geq h_{M'}(f_2)-1\geq h_M(f_2)\geq h_M(f_1)$, again a contradiction.

Now suppose $g$ is not a dimer height function. One possible obstacle is that for some edge $e$ separating two faces $f_1$ and $f_2$,  $g(f_1)-g(f_2)$ is not one of the allowed dimer height changes across $e$. Then the scenario above must happen, which is impossible.

Another possible obstacle is when the value of $g$ changes at least four times circling around a vertex $v$. (Obviously $g$ has to change even number of times circling around any vertex.) On the other hand, any dimer height function must change either zero or two times circling around $v$. Also if $e_0$ is the unique edge in $M_0$ that connects $v$, a dimer height function that changes two times circling around $v$ must change its value across $e_0$. This implies that out of the four value changes for $g$ around $v$, at least one change corresponds to the scenario mentioned before, which is again impossible.

The last possible obstacle is when the value of $g$ changes exactly two times circling around a vertex $v$, but neither of them happens at $e_0$, borrowing the notations from above. Since the scenario mentioned at the beginning is impossible, those two changes must have one of them coinciding with $h_M$ and the other coinciding with $h_{M'}$. Therefore both $h_M$ and $h_{M'}$ must both change across $e_0$, with the same net increase. But since $g =  h_M\vee h_{M'}$, $g$ must also change across $e_0$, a contradiction.
\end{proof}

\subsection{Local moves}

\label{sec:localmove}

Now we define two types of local moves for the dimer model,  {\it vertex contraction/expansion} and the {\it spider move}. These first appeared in \cite{JP03} and were also studied in \cite{AK11,DS07}. These local moves happen at two different levels. One level is a local modification of the weighted bipartite graph, and another level is a possibly random mapping of dimer coverings on the graph.

{\it Vertex contraction/expansion}, on the graph level, involves either shrinking a $2$-valent vertex and its two incident edges into a single vertex, or its reversal. See Figure~\ref{fig:shrink}. Assume that {\it the three vertices involved are all distinct}. Before shrinking, we first perform a gauge transformation to make both edge weights equal to 1. During expansion, on the other hand, simply assign both edges weight 1. 
\begin{figure}[h]
\caption{Vertex contraction/expansion move}
\includegraphics[scale=1.3]{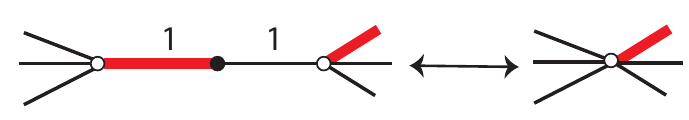}
\centering
\label{fig:shrink}
\end{figure}

On the dimer level, given a dimer covering before contraction, we simply delete the dimer incident to the deleted middle vertex, and keep the rest of the dimer covering after contraction. For expansion, we keep the covering, and match the added middle vertex with the unmatched side.

The spider move is the more interesting case. See Figure~\ref{fig:spider} for an illustration. 
\begin{figure}[h]
\caption{Spider move on graph level}
\includegraphics[scale=1.3]{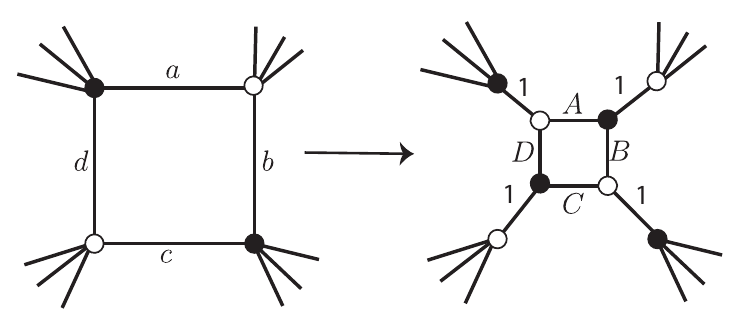}
\centering
\label{fig:spider}
\end{figure}

Start with a quadrilateral face with a top-left black vertex such that {\it all four vertices are distinct}. On the graph level, first insert four tentacles at the corners. The four tentacles are  assigned weight 1. The other four weights are assigned as follows:
\al{
A=\fr{c}{ac+bd}, B=\fr{d}{ac+bd}, C=\fr{a}{ac+bd}, D=\fr{b}{ac+bd}.\numberthis\label{spider}
} 
For the spider move with the opposite coloring, first perform vertex expansion at four corners and then implement the spider move on the internal face. For the reversal with the original coloring, perform the opposite-coloring spider move on the internal face and shrink the four $2$-valent edges.  Hence we  only call the move in Figure~\ref{fig:spider} the spider move.

On the dimer level, we keep all the dimers that are not one of the four internal edges. Then depending on whether each of four original vertices were matched externally or internally, we make some choices in order to complete a dimer covering. See Figure~\ref{fig:spidershuffle} for some of the cases.
\begin{figure}[h]
\caption{Spider moves on the dimer level. Only the first row has randomness. In the second row, we omitted three other symmetric cases.}
\includegraphics[scale=0.9]{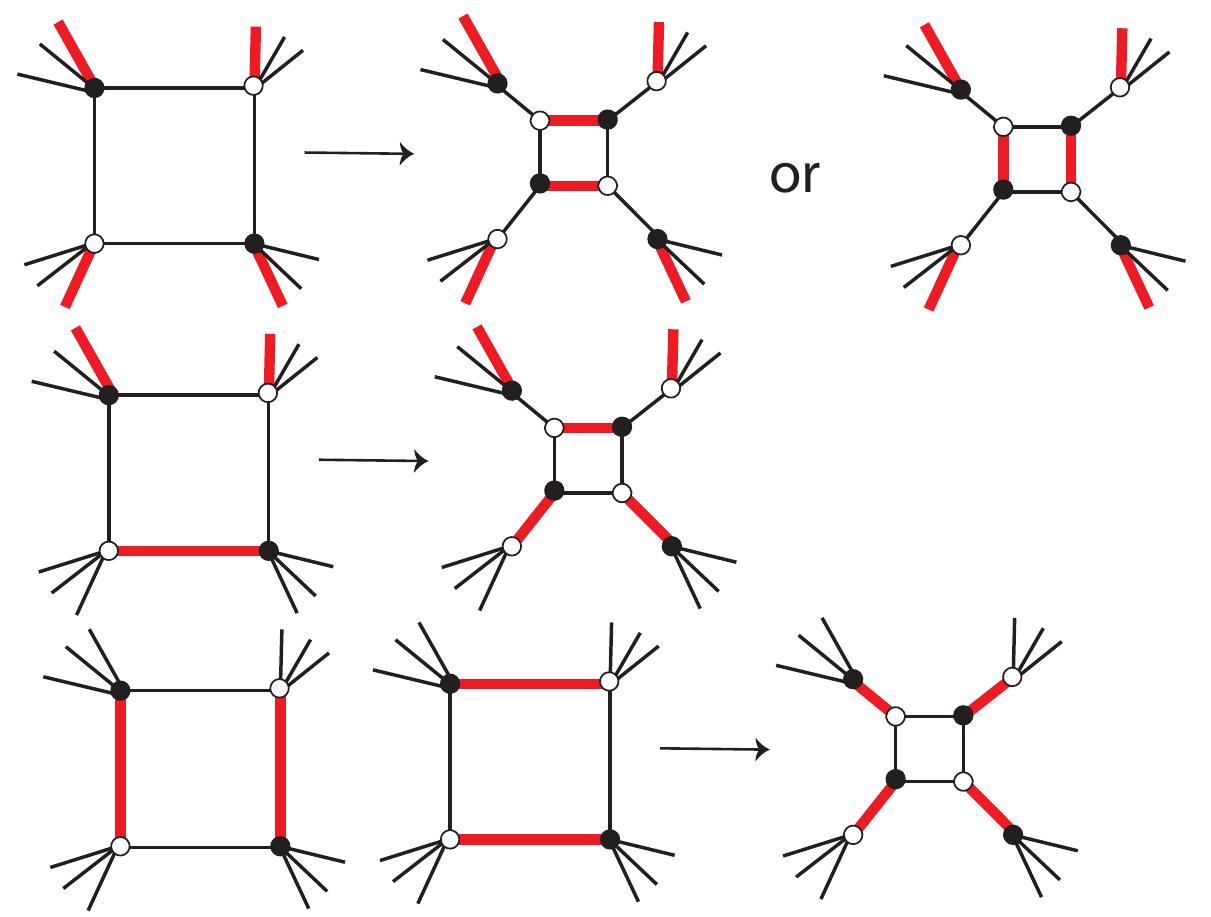}
\centering
\label{fig:spidershuffle}
\end{figure}

In the first row of Figure~\ref{fig:spidershuffle}, a choice is made between the two possibilities according to the ratio between their total weights. The two horizontal dimers are chosen with probability $\fr{AC}{AC+BD}$ and the two vertical dimers are chosen with probability $\fr{BD}{AC+BD}$. Many variants of the following statement appeared in the literature(\cite{AK11,JP03, DS07}), and we will include a proof at the end for completeness.

\begin{prop}\label{prop:loc}
For any finite bipartite  graph $H$ embedded on a torus, the local moves preserve the Boltzmann measure of dimer coverings on $H$, in the sense that applying a local move to the Boltzmann measure on $H$ results in the Boltzmann measure on the new graph $H'$.
\end{prop}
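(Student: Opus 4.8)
The plan is to verify the statement separately for vertex contraction/expansion and for the single spider move of Figure~\ref{fig:spider}; all the other spider moves are by definition compositions of these, and contraction is the inverse of expansion, so this suffices. For contraction/expansion the argument is quick: a gauge transformation multiplies the weight $\prod_{e\in M}w(e)$ of \emph{every} covering $M$ by the same positive constant and hence leaves the Boltzmann measure unchanged, so after the gauge step we may assume the two edges at the $2$-valent vertex both have weight $1$. The dimer-level operation is then a weight-preserving bijection $\mcl M(H)\to\mcl M(H')$: the $2$-valent vertex is matched to exactly one of its two neighbours, deleting that weight-$1$ dimer and merging the three (distinct) vertices gives a covering of $H'$, and recording which of the two former edges the merged vertex inherits recovers the inverse uniquely. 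The product of weights is unchanged, so $Z_H=Z_{H'}$ and $\mu_H$ pushes forward to $\mu_{H'}$.

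For the spider move I would condition on the dimers lying outside the small region where the move acts. Let $R$ be the set of edges created or destroyed — the four quadrilateral edges in $H$, and the four tentacles together with the four new quadrilateral edges in $H'$ — and let $Q$ be the four original corners (in $H'$ there are also four new $3$-valent vertices). Because the move is local, $H$ and $H'$ have literally the same vertices, edges and weights away from $R$, and the dimer-level operation retains every dimer not in $R$; so, fixing any configuration $\eta$ of dimers on $E\setminus R$ that extends to a covering, the operation carries coverings of $H$ extending $\eta$ to coverings of $H'$ extending $\eta$, and on each side the list of internal completions depends only on the set $S\subseteq Q$ of corners that $\eta$ matches outside $R$. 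A short case check — using bipartiteness and the distinctness assumptions in the definitions — then shows that the feasible patterns are $S=\emptyset$, $S=Q$ and the four ``mixed'' one-black--one-white pairs, the same on both sides; that for each mixed pair there is a unique internal completion on each side, with the $H'$-weight equal to the $H$-weight divided by $ac+bd$; that for $S=\emptyset$ the two completions of $H$ (opposite quadrilateral edges, of weights $ac$ and $bd$) both map deterministically onto the single completion of $H'$ (each corner matched to its tentacle, weight $1$); and that for $S=Q$ the single (empty) completion of $H$ maps to the two completions of $H'$ (opposite new edges, of weights $AC$ and $BD$) with precisely the probabilities $\tfrac{AC}{AC+BD},\tfrac{BD}{AC+BD}$ prescribed in the first row of Figure~\ref{fig:spidershuffle}. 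The explicit weights in \eqref{spider} are exactly what makes the mixed and $S=\emptyset$ cases consistent, and one also checks $AC+BD=1/(ac+bd)$.

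Granting this, let $\mu_H^\eta$ and $\mu_{H'}^\eta$ be the Boltzmann measures conditioned on $\eta$. In every feasible case the operation pushes $\mu_H^\eta$ forward to $\mu_{H'}^\eta$ — a bijection when the completion is unique on both sides, a map onto a point mass when $S=\emptyset$, and, when $S=Q$, a randomization with probabilities proportional to the $H'$-weights $AC,BD$ — while the ratio between the total unnormalized weight of coverings of $H$ extending $\eta$ and that of coverings of $H'$ extending $\eta$ equals $ac+bd$ for \emph{every} feasible $\eta$ (the common external factor cancels; the mixed and $S=\emptyset$ cases give $ac+bd$, and the $S=Q$ case gives $1/(AC+BD)=ac+bd$, the reciprocal arising because there weight is being shared out rather than merged). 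Writing $\mu_H$ as the mixture of the $\mu_H^\eta$ with mixing weights proportional to those totals, its pushforward is therefore the mixture of the $\mu_{H'}^\eta$ with the corresponding $H'$-totals, which is exactly $\mu_{H'}$ (and $Z_H/Z_{H'}=ac+bd$). The only step with real content is the case analysis of the preceding paragraph: enumerating, for each boundary pattern $S$, the internal completions and their weights and verifying that a \emph{single} constant $ac+bd$ (with the matching $AC+BD=1/(ac+bd)$ in the random case) governs all patterns simultaneously — this is precisely what forces the weight rule \eqref{spider}. The reduction to ``contraction/expansion plus one spider move'' and the fibrewise locality argument are routine once the distinctness hypotheses are invoked.
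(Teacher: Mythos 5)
Your proposal is correct and follows essentially the same route as the paper's proof in Appendix A: conditioning on the external configuration $\eta$ is exactly the paper's fixing of $T$ in the quantities $w^T_S$, and your case check (ratio $ac+bd$ for every boundary pattern, with the $S=Q$ split proportional to $AC:BD$) reproduces the paper's row-by-row ratio computation for Figure~\ref{fig:spidershuffle}. The only difference is presentational — you spell out the weight-preserving bijection for contraction/expansion, which the paper dismisses as obvious.
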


\subsection{Shuffling height process}\label{sec:dimershuffle}

The following is not a precise definition, but rather a general description of the type of process we are considering.

First, we consider a {\it global operation} where we choose a local move on $G$ or $G_n$, and perform it at all $T$-periodic counterparts simultaneously, requiring that {\it the edges involved do not overlap with each other}. The spider moves at different locations are independent in terms of their randomness. See Figure~\ref{fig:dimershuffle} for an example. 

By Proposition~\ref{prop:loc}, such a global operation still preserves the Boltzmann measure, since we can consider it as performing the local moves sequentially. It is also well defined on $G$, as we can perform the local moves in the increasing order of  their distance from the origin, so that every finite region of $G$ will be determined after some finite number of local moves.

Second, we want to compare the height functions before and after a local move. Given the reference covering $M_0$ and a height function $h_M$ before a local move, we may choose (in most cases there is a natural choice) the reference covering after the local move to be deterministically one of the possible outcomes of the local move applied to $M_0$. This defines a new height function $h_{M'}$ for the random outcome $M'$. 

\begin{lem}\label{lem:locheight}
With the choice above of the reference covering on $G$, $h_{M'}$ agrees with $h_M$ at every face except the inner face of the spider move (up to a global additive constant which is made zero), where $M'$ is an outcome of $M$ after a local move. 
\end{lem}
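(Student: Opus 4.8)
The plan is to argue face-by-face, using the local nature of the moves together with the definition of the height function in terms of net flow across edges. The key point is that for vertex contraction/expansion, the chosen reference covering $M_0'$ and the chosen outcome $M'$ differ from $M_0$ and $M$ only on the two (or one) edges incident to the $2$-valent vertex, and these edges lie ``inside'' the modified region, so the divergence-free flow $[M']-[M_0']$ agrees with $[M]-[M_0]$ on every edge outside a contractible neighborhood; hence the induced gradient flow on the dual graph, and therefore $h_{M'}-h_M$, is constant on the complement of that neighborhood. For contraction/expansion there is no genuine inner face being created or destroyed in a way that changes heights (the two faces adjacent to the $2$-valent vertex are the same face), so after fixing the additive constant, $h_{M'}=h_M$ everywhere — this case is essentially bookkeeping. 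The substantive case is the spider move.

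For the spider move, I would set up coordinates on the quadrilateral face: label the four outer faces $f_1,\dots,f_4$ around the quad and the inner face $f_0$ (the one created by the four internal edges). First I would check that $[M']-[M_0']$ and $[M]-[M_0]$ agree on every edge not among the four internal edges and the tentacles: this is immediate since $M'$ and $M_0'$ are chosen to coincide with $M$ and $M_0$ outside the spider region. Next, I would verify that crossing any edge that borders one of the outer faces $f_i$ but not $f_0$, the net flow is unchanged, so the heights of $f_1,\dots,f_4$ are determined (relative to each other and to everything outside) exactly as before — the spider move cannot alter them. Finally, I would compute the height of $f_0$ by crossing, say, the internal edge separating $f_0$ from $f_1$, in each of the relevant configurations of Figure~\ref{fig:spidershuffle}. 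Because in every case the four outer vertices' matching ``type'' (external vs. internal) is preserved by the move by construction, one checks directly that the flow pattern across the $f_0$–$f_1$ boundary is consistent and yields a well-defined value for $h_{M'}(f_0)$, which is the only face whose height may genuinely differ from $h_M$ (indeed $f_0$ may not even ``exist'' before the move in the sense of carrying an independent height, since before the spider move the relevant face configuration is different). Then one fixes the global additive constant so that $h_{M'}=h_M$ on all the outer faces, and the lemma follows.

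The main obstacle I anticipate is purely combinatorial: carefully enumerating the cases of the spider move on the dimer level — the four outer vertices each matched internally or externally, plus the random choice in the all-external case — and confirming in each that (i) the outer faces' heights are forced to be unchanged, and (ii) the value read off for the inner face $f_0$ is independent of the crossing used to compute it, i.e. that going around $f_0$ via different internal edges gives a consistent answer. Subtleties about orientation (which of the two vertices on an internal edge is white, hence the sign of the $\pm1$ flow contribution) and about the tentacle edges introduced by the move need to be tracked, but after the expansion/contraction step (already handled above) the tentacles contribute nothing new. One should also double-check that the ``natural choice'' of reference covering $M_0'$ mentioned before the lemma is indeed always available — i.e. that some deterministic outcome of the spider move applied to $M_0$ can be selected — which is where the hypothesis that the four vertices are distinct is used, as in the definition of the move itself. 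Once the case analysis is organized by the matching type of the outer vertices, each case is a one-line flow computation, so the proof is short modulo this enumeration.
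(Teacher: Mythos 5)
Your argument is correct and is essentially the paper's own proof: the paper likewise observes that the dimer configuration (equivalently, the flow $[M]-[M_0]$) is unchanged on all edges outside the local-move region, so heights computed along dual paths avoiding that region are invariant, leaving only the inner face of the spider move free to change. The case enumeration you anticipate is not really needed, since well-definedness of the height function for any planar dimer covering already guarantees consistency of the inner face's value.
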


For a moment let us forget about the precise embedding. When we say $h_{M'}$ agrees with $h_M$ at a face, we mean that the heights defined at the {\it combinatorially corresponding faces} agree, since no face is created or destroyed.

\begin{proof}
We can relate the heights at different faces as in Figure~\ref{fig:localheight}. Since we assume that all vertices involved in the local moves are distinct, the dimer configuration along the dotted paths drawn remains the same. Therefore, the height on the surrounding faces can be made invariant before and after the local move. Then the height at every other face also stays unchanged except the middle one in the spider move.
\begin{figure}[h]
\caption{Paths in dual graphs}
\includegraphics[scale=1]{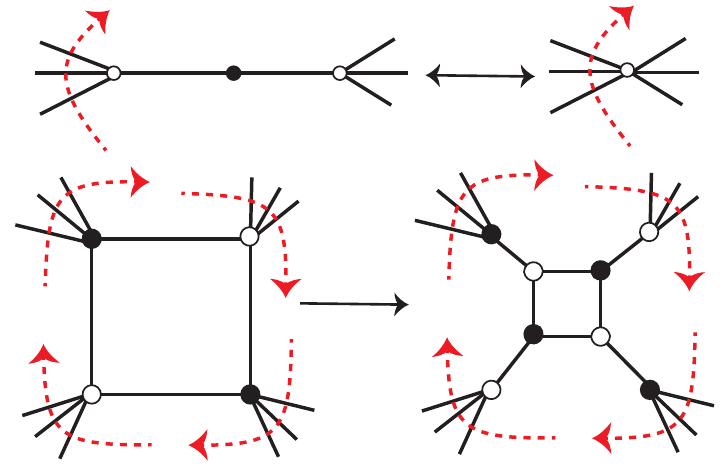}
\centering
\label{fig:localheight}
\end{figure}
\end{proof}

A consequence of Lemma~\ref{lem:locheight} is that during a global operation, the heights of all the faces except the inner faces of spider moves can be kept unchanged, since such is true after every local move. This is the reason why we will be able to discuss the evolution of height functions. Also in this case, we will choose the same new reference dimer for each local move. That is, we can assume that {\it the reference covering remains $T$-periodic} after a global operation. 

When there are two dimer coverings $M_1$ and $M_2$ on $G$, we can perform a local move or a global operation on them simultaneously. The only requirement is that,  they are {\it coupled} so that each pair of corresponding faces during a spider move share the same randomness.  Then we have the following ``monotonicity'' lemma.

\begin{lem}[Monotonicity]\label{lem:monotone}
With the same choice of reference covering on $G$ as in Lemma~\ref{lem:locheight}, if $h_{M_1}\geq h_{M_2}$ at each corresponding face before a local move, then $h_{M_1'}\geq h_{M_2'}$ still holds afterwards, where $M_1'$ and $M_2'$ are the coupled results of the local move performed on $M_1$ and $M_2$ respectively.
\end{lem}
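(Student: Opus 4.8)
The plan is to reduce the statement to a purely local check at a single spider move using Lemma~\ref{lem:locheight}, and then verify that local statement by a short case analysis that exploits the coupling. Since the lemma concerns one local move, Lemma~\ref{lem:locheight} tells us that for $i=1,2$ the function $h_{M_i'}$ coincides with $h_{M_i}$ (after normalizing the additive constant to zero) at every face except the inner face $f$ of a spider move. Hence $h_{M_1'}\ge h_{M_2'}$ holds automatically away from $f$ — in particular a vertex contraction/expansion requires no argument, since there $h_{M_i'}\equiv h_{M_i}$ — and the whole statement reduces to proving $h_{M_1'}(f)\ge h_{M_2'}(f)$.

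Next I would pin down how much $h_{M'}(f)$ can vary. After the move $f$ is bounded by four edges; fixing one of them, $e$, between $f$ and a neighbouring face $g$, Lemma~\ref{lem:locheight} gives $h_{M'}(g)=h_M(g)$, while $h_{M'}(f)=h_{M'}(g)+\varepsilon$ with $\varepsilon$ taking one of two consecutive integer values according to whether $e$ is a dimer of $M'$. Thus $h_{M'}(f)$ assumes only two consecutive values, determined by the (unchanged) surrounding heights together with the configuration of $M'$ on the four inner edges. Now the restriction of the pre-move covering $M_i$ to the four edges of the quadrilateral face is an independent set in a $4$-cycle, hence empty, a single edge, or a pair of opposite edges, and the empty case is the only one carrying the coin, producing one of the two ``parallel'' inner configurations. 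When both $M_1$ and $M_2$ are in the empty case, the shared coin forces $M_1'$ and $M_2'$ into the same parallel configuration, so $h_{M_1'}(f)$ and $h_{M_2'}(f)$ are computed from the surrounding heights in identical fashion and the inequality follows from $h_{M_1}\ge h_{M_2}$. In each of the finitely many remaining deterministic pairs of local configurations for $(M_1,M_2)$, one reads off from the spider rule on the dimer level (Figure~\ref{fig:spidershuffle}) the post-move inner configurations, expresses $h_{M_i}(f)$ and $h_{M_i'}(f)$ through the surrounding heights, and checks directly that $h_{M_1}(f)\ge h_{M_2}(f)$ forces $h_{M_1'}(f)\ge h_{M_2'}(f)$, using the global hypothesis $h_{M_1}\ge h_{M_2}$ to discard the configuration pairs that are themselves inconsistent with it.

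The main obstacle is precisely this bookkeeping: enumerating the admissible pairs of local configurations while keeping track of the colour orientation of the spider move, of the passage $M_0\mapsto M_0'$ of the reference covering, and of which pairs the global inequality excludes — the last point being why $h_{M_1}\ge h_{M_2}$ must be used everywhere, not merely near $f$. A tempting shortcut is the lattice property (Lemma~\ref{lem:lattice}): $h_N:=h_{M_1'}\wedge h_{M_2'}$ is the height function of a genuine covering $N$ that agrees with $M_2'$ off $f$, and one would try to contradict $h_N(f)<h_{M_2'}(f)$ by applying the reverse spider move; but this seems to come down to the same local inspection, so I expect the case analysis to be unavoidable. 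Finally, the version for a global operation follows by composing the single-move statement over the disjoint local moves involved.
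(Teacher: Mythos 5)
Your reduction is the same as the paper's: by Lemma~\ref{lem:locheight} everything collapses to the inner face $f$ of a spider move, and the coupled coin handles the case where both coverings are random there. Where you diverge is in how the remaining (deterministic) cases are dispatched. You propose enumerating all admissible pairs of pre-move local configurations and checking each one; the paper avoids this entirely with a single observation about the divergence-free flow $[M_1']-[M_2']$, whose height function is $h_{M_1'}-h_{M_2'}$: if this difference is nonnegative at all four neighbours of $f$ but negative at $f$, the flow must circulate around $f$, which (edges around $f$ alternating in colour, and dimers forming an independent set on the $4$-cycle) forces $M_1'$ to contain one parallel pair of inner edges and $M_2'$ the complementary pair. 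That post-move configuration arises only from the doubly random case, which the coupling excludes — so no case-by-case bookkeeping is needed, and in particular there is no need to ``discard configuration pairs inconsistent with the global hypothesis.'' Relatedly, your remark that $h_{M_1}\geq h_{M_2}$ must be invoked away from $f$ is not right: the argument only ever uses the inequality at the four faces adjacent to $f$, which survive the move unchanged. Your plan would still go through — the enumeration is finite and each case checks out, precisely because the flow argument shows no deterministic pair can produce the offending configuration — but the paper's route replaces the entire enumeration with one picture.
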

\begin{proof}
The heights of the faces do not change in vertex contraction/expansion, so there is nothing to prove there.

It suffices to consider $h_{M_1} -  h_{M_2}$ and $h_{M_1'} - h_{M_2'}$, since they do not depend on the choice of reference covering. By assumption $h_{M_1}-h_{M_2}\geq 0$. By Lemma~ \ref{lem:locheight}, we can assume that, for both $h_{M_1}$ and $h_{M_2}$, the heights at all other faces stay the same except the internal face of the spider move, denoted as $f$. 

Therefore, we have $h_{M_1'}- h_{M_2'}\geq 0$ at all faces except $f$. Suppose the statement is wrong, then $h_{M_1'}-h_{M_2'}<0$ at $f$. By considering the flow $[M_1']-[M_2']$, the only case this can happen is shown in Figure~\ref{fig:monotonepf}, where the green vertical dimers are in $M_2'$ and the red horizontal dimers  are in $M_1'$. This cannot happen since we assumed that they are coupled at $f$ during the spider move.
\begin{figure}[h]
\caption{A local configuration of two dimer coverings}
\includegraphics[scale=1]{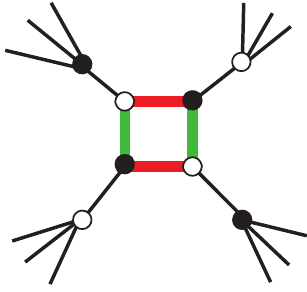}
\centering
\label{fig:monotonepf}
\end{figure}
\end{proof}

Now to obtain a tractable height process, we make a strong assumption. 

{\bf Assumption:}  After a particular choice of sequence of global operations being performed on $G_n$ (resp. $G$), the resulting graph coincides with $G_n$ (resp. $G$) in  terms of the actual embedding,  with combinatorially corresponding faces matching each other, and  the edge weights are also the same up to gauge transformations.

Another assumption is that the reference dimer covering also remains the same. This is not necessary because there are only finitely many $T$-periodic coverings, so we can make this true by running longer time if necessary, given the assumption above.

If the assumption above is true, we call one iteration of such a sequence of global operations a {\it shuffle}, and the corresponding height evolution a {\it shuffling height process}.

The assumption might seem very strong. The specific example that we will analyze is  the one in Figure~\ref{fig:Z2}. But already in $\mbb Z^2$ lattice, by increasing the size of the fundamental domain, some special phenomena in the steady state fluctuation are discovered in \cite{CT18}. 

The reason why we introduced this process in this more general manner is, first, the existence of the hydrodynamic limit of any such process can be obtained by a similar approach, even though the specific PDE might be hard  to compute; and second, the necessary lemmas listed above and their proofs do not assume much about the specific graph structure, so it seems more natural to state them independently.

\section{The main result}\label{sec:mr}

\subsection{The specific example}\label{sec:example}

Now we turn to the simplest example where a shuffling height process can be defined, the 1-periodic domino tiling model in Figure~\ref{fig:Z2}.

We assume that the vertical edges have weight $\sqrt{a}$ and the horizontal edges have weight 1, as any other choice of positive weights with the same fundamental domain is gauge equivalent to this one. By convention in the literature, we choose the reference flow $[M_0]$ in the initial graph to be $1/4$ on each edge.  We define the  height function $h$ on the faces, which are labeled by  coordinates in $\mbb Z^2$. With this coordinate, the graph is periodic under the action $T$, generated by translations $(2,0)$ and $(0,2)$. 

We assume that initially the face at $(0,0)$ has a top-left black vertex. We call faces $(i,j)$ with $i+j$ even {\it even faces}, and otherwise {\it odd faces}. Also, we multiply the heights by $4$, so that all of them are integers. This is the height function of domino tilings defined in \cite{WT90}. Figure~\ref{fig:dominoheight} shows a picture in terms of dominoes.
\begin{figure}[h]
\caption{A domino tiling on a so-called Aztec diamond region with heights labeled. The vertices lie on the dual lattice of Figure~\ref{fig:Z2}.}
\includegraphics[scale=1.2]{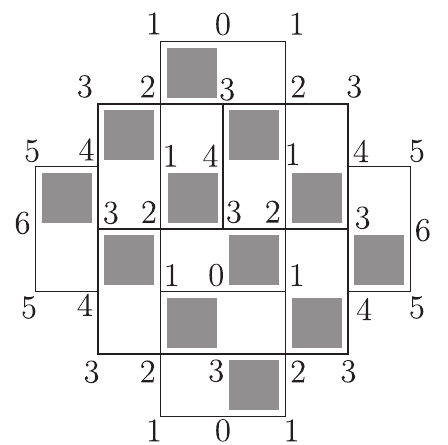}
\centering
\label{fig:dominoheight}
\end{figure}

To be clear, when we speak of an ``edge'', we always refer to an edge on the primal graph as in Figure~\ref{fig:Z2}, etc. By definition of the height function, when we cross an edge with a white vertex on the left, the height either increases by 3 or decreases by 1. Therefore once we fix the height of face $(0,0)$ modulo 4, all other faces are determined modulo 4.

The shuffling procedure is just the domino shuffling from \cite{EKLP92}. Propp \cite{JP03} rephrased this shuffling procedure in terms of the local moves described in Section \ref{sec:localmove}. By our definition, a shuffle consists of the following steps.

\begin{enumerate}
\item Perform a spider move at all even faces $(i,j)$ (these are two $T$-periodic families of local moves);

\item Perform vertex contraction at all 2-valent vertices;

\item Perform a spider move at all odd faces $(i,j)$;

\item Perform vertex contraction at all 2-valent vertices;

\end{enumerate}

See Figure~\ref{fig:dimershuffle} for one iteration. By  formula~\eqref{spider}, after Step 1, the horizontal edge weights become $\fr1{1+a}$, and the vertical edge weights become $\fr{\sqrt{a}}{1+a}$. So up to gauge transformations, the edge weights remain the same after Step 2, and after Step 4 as well. By viewing the reference flow $[M_0]$ as a convex combination of four different integer coverings, each consisting of a single type of edges, $[M_0]$ also remains $1/4$ on all horizontal and vertical edges. Therefore, the assumption for a shuffling height process is satisfied. 
\begin{figure}[h]
\caption{One domino shuffle with a fixed fundamental domain labeled. Each step consists of several $T$-periodic families of local moves.}
\includegraphics[scale=1.7]{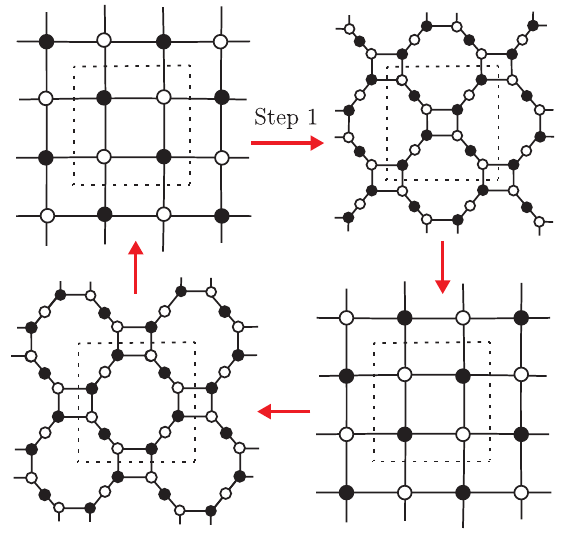}
\centering
\label{fig:dimershuffle}
\end{figure}

Consider the height at certain even face $(i,j)$. By Lemma~\ref{lem:locheight}, it only gets modified at Step 1 of a shuffle, since that is the only time when that face undergoes a spider move. A small inconvenience is that the height module $4$ changes after a shuffle. To compensate for that, from now on, {\it we subtract all heights by $2$ after every shuffle}. This amounts to adding a constant drift to the hydrodynamic limit, which does no harm. Since the only relevant faces during a spider move are the neighboring ones, we can list all the possible outcomes at $(i,j)$ after a shuffle in Table~\ref{tab:1}. We see that the height at $(i,j)$ is nonincreasing, and stays the same modulo $4$. The same table also describes the height evolution at an odd face $(i,j)$, where the left column represents the height after Step 2. In other words, the entire height process can be defined using the local rules described in Table~\ref{tab:1}, without mentioning dimers.
\begin{table}[h]
\centering
\caption{The height evolution at an even face $(i,j)$ during a shuffle }
\label{tab:1}
\begin{tabular}{@{}rclc@{}}\toprule[3 pt]
\multicolumn{3}{c}{Local heights centered at $(i,j)$} & The height at $(i,j)$ after the shuffle\\\midrule
&$h+1$&&\\
$h-1$&$h$&$h-1$&$h-4$\\
&$h-3$&&\\\cmidrule(r{4em}){1-3}
&$h-3$&&\\
$h-1$&$h$&$h-1$&$h-4$\\
&$h+1$&&\\\cmidrule(r{4em}){1-3}
&$h+1$&&$h$ with probability $\fr{a}{1+a}$ \\
$h-1$&$h$&$h-1$& \\
&$h+1$&&$h-4$ with probability $\fr{1}{1+a}$\\\cmidrule(r{4em}){1-3}
&$h+1$&&\\
$h-1$&$h$&$h+3$&$h$\\
&$h+1$&&\\\cmidrule(r{4em}){1-3}
&$h+1$&&\\
$h+3$&$h$&$h-1$&$h$\\
&$h+1$&&\\\cmidrule(r{4em}){1-3}
&$h-3$&&\\
$h-1$&$h$&$h-1$&$h-4$\\
&$h-3$&&\\\cmidrule(r{4em}){1-3}
&$h+1$&&\\
$h+3$&$h$&$h+3$&$h$\\
&$h+1$&&\\
\bottomrule[3 pt]
\end{tabular}
\end{table}

\subsection{Hydrodynamic limit}\label{sec:hydro}

To state a hydrodynamic limit result on the height evolution, we need to introduce a time parameter $t$. Suppose the initial condition at $t=0$ is the height function of certain dimer covering on $G$, which is a function  $\mbb Z^2 \rar \mbb Z$, as defined previously, the dynamics is that a shuffle happens at $t=1,2,3,\dots$. This defines a random process $h:\mbb Z^2\times \mbb N\rar \mbb Z$ where $h(x,t)$ denotes the height at face $x$ and time $t$. A priori, the spatial function $h(\cdot,t)$ for any $t$ is a domino height function. As mentioned before, $h(x,\cdot)$ is nonincreasing. Furthermore, we require that $h(0,0)\equiv 0 \pmod{4}$. This immediately determines all $h(x,t) \pmod{4}$. We will call such height function $h(\cdot, t)$ {\it admissible}.

 The underlying probability space $\Omega$ consists of a collection of iid Bernoulli random variables at each $(x,t)\in\mbb Z^2\times \mbb N$, which take value $1$ with probability $\fr 1{1+a}$ and value $0$ with probability $\fr a{1+a}$. In particular, given $\omega\in \Omega$, $\omega(x,t)$ dictates the randomness of a spider move that happens at face $x$ and time $t$.

Define the space of {\it asymptotic height functions} $\Gamma$ to be the set of all {\it $2$-spatially-Lipschitz functions} from $\mbb R^2$ to $\mbb R$, which in this paper means
\al{
|f(x)-f(y)|\leq 2\left|x-y\right|_\infty,
}
where $|\cdot|_p$ is the $\ell_p$ norm.

The choice of $\Gamma$ comes from the Newton polygon, as defined in Section~\ref{sec:height}. In this case, the (rescaled) Newton polygon bounds the region $U:=\lc x:|x|_1\leq 2\rc$, and a differentiable function is in $\Gamma$ iff its gradient lies in $U$. See Lemma~\ref{lem:phider} for a similar statement.

Now suppose we are given some $g\in \Gamma$ and a sequence of initial conditions $(h_n(\cdot,0))_{n\in \mbb N}$ approximating $g$. What we mean exactly is that $(h_n(\cdot,0))$ is a sequence of admissible height functions, random or not, independent or not, such that
\ba{
\lim_{n\rar \infty} \mbb{E} \sup_{|x|_1\leq R}\left|\fr1nh_n(\lfloor nx\rfloor, 0)-g(x)\right|=0\label{init}
}
for every finite $R>0$, and the expectation $\mbb{E}$ is taken over the probability space $\Omega_0$ of the initial condition $(h_n(\cdot,0))_{n\in \mbb Z_{>0}}$.

The height evolution of $(h_n)$ is governed by the single probability space $\Omega$. This means implicitly that the randomness of a spider move at $(x,t)$ is coupled for all $h_n$.

Now we are ready to state the hydrodynamic limit.
\begin{thm}\label{thm:main}We have for every $R>0$
\ba{\lim_{n\rar \infty} \mbb{E}\sup_{|x|_1\leq R, t\leq R} \left|\fr1nh_n(\lfloor nx\rfloor, \lfloor nt\rfloor)-u(x,t)\right|=0\label{hydro}}
where $u:\mbb R^2\times \mbb R_{\geq 0}\rar \mbb R$ is the unique viscosity solution of 
\begin{equation}
\lc\begin{array}{rl}u_t+H(u_x)&=0\label{pde}\\
u(x,0)&=g(x),\end{array}\right.
\end{equation}
 $H$ is defined on $U$ by
\ba{H(\rho_1,\rho_2)=\fr4{\pi}\cos^{-1}\lp \fr{a}{1+a}\cos\lp\fr{\pi \rho_2}2\rp - \fr{1}{1+a}\cos\lp\fr{\pi \rho_1}2\rp\rp\label{hamil},}
 $u_t:=\pd{u}{t}$ and $u_x$ denotes the spatial gradient of $u$.
\end{thm}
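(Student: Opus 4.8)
The plan is to follow the Rezakhanlou-type strategy of \cite{FR01}, adapted to this two-dimensional, bounded-speed setting. First I would interpolate the discrete height process $h_n$ in both space and time, so that each realization becomes a continuous function on $\mbb R^2\times\mbb R_{\ge 0}$ that is Lipschitz in space (the $2$-Lipschitz bound coming from the Newton polygon) and Lipschitz in time (a bound coming from the $\{0,-4\}$-valued increments in Table~\ref{tab:1}); this is Section~\ref{sec:interpo}, and the interpolation can be arranged so that one works in the uniform rather than the Skorohod topology. Because the viscosity characterization of \eqref{pde} must be tested against arbitrary smooth functions at arbitrary spacetime points, a single realization of the dynamics is not enough: I would instead encode, for each $n$, the \emph{whole} family of interpolated evolutions started from every admissible profile at every integer time, i.e.\ a discrete random ``semigroup'' $S^{(n)}$, and study its law as a probability measure on a space of semigroup-valued trajectories equipped with a topology of local uniform convergence. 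The central difficulty, exactly as in \cite{FR01}, is that this space is a priori too large to be separable; the resolution is to exploit monotonicity (Lemma~\ref{lem:monotone}), translation covariance, commutation with additive constants, and the uniform Lipschitz bounds to show that the evolution from all profiles is a uniformly continuous function of its action on a countable dense family, so that the relevant space is separable and complete. Then Prokhorov's theorem together with a generalized Arzel\`a--Ascoli argument gives precompactness of the laws of $S^{(n)}$, and one extracts a subsequential limit; this is Section~\ref{sec:lp}.

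Next I would pin down the structure of any subsequential limit $S=(S_t)_{t\ge 0}$. Passing Lemma~\ref{lem:monotone} (monotonicity), the obvious translation covariance, the identity $S_t(f+c)=S_tf+c$, and the discrete contraction $\|S^{(n)}_tf-S^{(n)}_tf'\|_\infty\le\|f-f'\|_\infty$ to the limit shows $S$ is monotone, translation covariant, commutes with constants, and is a sup-norm contraction; the fact that one shuffle updates a face using only its neighbours (Lemma~\ref{lem:locheight}) gives a finite propagation speed in the limit; the $2$-Lipschitz bound is preserved, so $S_t$ maps $\Gamma$ into $\Gamma$; and the nesting of shuffles gives the semigroup identity $S_{s+t}=S_s\circ S_t$, so $S$ is a bona fide semigroup. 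A monotone sandwiching of $h_n$ between deterministic profiles, combined with the contraction property, shows the subsequential limit is in fact \emph{deterministic}, i.e.\ the empirical measure concentrates.

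The third step identifies the limit using dimer theory. By Proposition~\ref{prop:loc} a shuffle preserves the Boltzmann measure on $G_n$, and (since the height change on the torus is unchanged by local moves) in the $n\to\infty$ limit it preserves the ergodic Gibbs measure $\mu_\rho$ of every slope $\rho$ in the interior of $U$. Starting the dynamics from a sample of $\mu_\rho$, the expected change of the height at a fixed face during one shuffle is, by Table~\ref{tab:1}, exactly $-4$ times the $\mu_\rho$-probability of an explicit finite collection of local dimer patterns around that face (one of them further weighted by the Bernoulli variable); this probability is computed from finitely many inverse-Kasteleyn (coupling-function) entries of the explicit slope-$\rho$ Gibbs measure on the weighted $\mbb Z^2$ lattice of Figure~\ref{fig:Z2}, and the value comes out to be precisely $H(\rho)$ as in \eqref{hamil}. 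Ergodicity of $\mu_\rho$ upgrades this to a law of large numbers, so the interpolated evolution from the flat profile $\ell_\rho$ of slope $\rho$ converges to $\ell_\rho(x)-tH(\rho)$, i.e.\ $S_t\ell_\rho=\ell_\rho-tH(\rho)$; by bracketing any $g\in\Gamma$ locally between flat profiles of nearby slopes and using monotonicity, this determines $S$ on all piecewise-linear, hence all, profiles in $\Gamma$. This is Section~\ref{sec:em}.

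Finally I would show $u(x,t):=(S_tg)(x)$ is the unique viscosity solution of \eqref{pde}. Given a smooth $\varphi$ touching $u(\cdot,t_0)$ from above at $x_0$, its gradient $p=\nabla\varphi(x_0)$ automatically lies in $U$ since $u(\cdot,t_0)\in\Gamma$; locally $u(\cdot,t_0)\le\ell_p+o(|x-x_0|)$, so by monotonicity, commutation with constants, finite propagation speed, and $S_s\ell_p=\ell_p-sH(p)$ one gets $u(x_0,t_0+s)\le u(x_0,t_0)-sH(p)+o(s)$, the subsolution inequality $u_t+H(u_x)\le0$; the supersolution inequality is symmetric. Because $\nabla u\in U$ everywhere, only the values of $H$ on $U$ are used, consistent with $H$ being defined only there; a comparison principle for this gradient-constrained Hamilton--Jacobi problem (via doubling of variables, with care at $\partial U$ where $H$ is not smooth) then forces $u$ to equal the viscosity solution of \eqref{pde}, and since the subsequential limit is unique and deterministic, the whole sequence converges, giving \eqref{hydro}; this is Section~\ref{sec:vs}. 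The step I expect to be the main obstacle is the first one: encoding the evolution from all initial data and all starting times produces an a priori non-separable space, and recovering separability, precompactness, and the semigroup property hinges on a delicate interaction between the monotonicity/covariance/Lipschitz structure of the domino shuffling and a model-specific interpolation that does not transfer verbatim from \cite{FR01}; the explicit Kasteleyn computation of $H$ and the boundary behaviour at $\partial U$ in the viscosity theory are the other substantial but more routine pieces.
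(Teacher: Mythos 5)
Your proposal follows essentially the same route as the paper: interpolation of the discrete process into a semigroup-valued trajectory, Rezakhanlou-type compactness on a carefully constructed separable space, identification of the limiting evolution on flat profiles via shuffle-invariant Gibbs measures and the inverse-Kasteleyn computation of $H$, and finally the viscosity-solution characterization. There is, however, one step whose justification as written would fail. You claim that ``ergodicity of $\mu_\rho$ upgrades this to a law of large numbers.'' The quantity that must concentrate is the \emph{time} average $\frac{1}{t}\sum_{s<t}\mathbf{1}_{Q(0,s)}$, i.e.\ a Birkhoff average for the time shift of the space-time process; spatial ergodicity of $\mu_\rho$ (which is itself not established here --- the paper deliberately avoids invoking Sheffield's uniqueness theorem) and stationarity of the dynamics do not by themselves identify its limit with the constant $H(\rho)/4$, since temporal ergodicity of the coupled dynamics is a separate, unproved statement. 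The paper's Lemma~\ref{lem:originspeed} instead converts the temporal deviation into a simultaneous spatial deviation over a ball of radius of order $\varepsilon t$ using the $2$-Lipschitz property of height functions, and then runs a quantitative second-moment estimate based on the polynomial correlation decay from \cite{KOS06}. Because the spatial window grows linearly with $t$, a qualitative ergodic theorem would not suffice; the quantitative decay is genuinely used. You would need to supply this (or an equivalent temporal-mixing argument) for your step three to go through.

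Two smaller overstatements are worth flagging but do not break the argument. First, the claim that ``monotone sandwiching'' already shows the subsequential limit is deterministic at the compactness stage is premature: knowing $S(s,t;g_\rho)=g_\rho-(t-s)H(\rho)$ for all $\rho$, together with monotonicity and the contraction property, pins down only the instantaneous behaviour at points of differentiability, not the evolution past shocks. Likewise, bracketing a general $g\in\Gamma$ between flat profiles does not by itself ``determine $S$ on all profiles''; that determination is exactly what the uniqueness of the viscosity solution provides, so both the identification of $S$ and the determinism of the limit should be recorded as consequences of Proposition~\ref{prop:idsemi}, as in the paper. Since you do carry out the viscosity step afterwards (including the correct observation that only the values of $H$ on $U$ are ever used), the conclusion is reached correctly once the law-of-large-numbers step is repaired.
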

The precise definition of viscosity solutions is delayed to  Section~\ref{sec:vs}. 

\begin{rem}
Notice that $H$ is continuous on $U$. One can compute the determinant of the Hessian of $H$ to be
\al{
-\lp\fr{a\pi\lp\cos\lp \fr{\pi \rho_1}2\rp+\cos\lp\fr{\pi \rho_2}2\rp\rp}{(a+1)^2-\lp\cos\lp\fr{\pi\rho_1}2\rp-a\cos\lp\fr{\pi \rho_2}2\rp\rp^2}\rp^2
}
which is negative in the interior of $U$.

The result here is for the shuffling in the plane. It can be replaced by a torus or cylinder, and the formula remains the same.
\end{rem}

\section{Smoothing out the height process}\label{sec:interpo}

The goal of this section is to embed the height process $h(s,t)$ in a suitable  space, which, as shown later,  also contains the semigroup solving the PDE. Since $h$ is discrete in both space and time, we need to extend it to a continuous process.

\subsection{Useful properties of the height process}

We first take a closer look at our height process $h(x,t)$. Let $\Phi$ denote the space of all admissible height functions, which are domino height functions whose value at $(0,0)$ is $0\pmod 4$. The following lemma is  rephrasing Lemma~\ref{lem:lattice}.
\begin{lem}[Lattice property]\label{lem:lattice2}
If $\vph_1, \vph_2\in \Phi$, then $\vph_1\wedge \vph_2\in \Phi$ and $\vph_1\vee \vph_2\in \Phi$. 
\end{lem}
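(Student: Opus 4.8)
The plan is to deduce this directly from Lemma~\ref{lem:lattice}, which already establishes the lattice property for general integer-valued dimer height functions on $G$, once we account for the two bookkeeping conventions that define $\Phi$: that the relevant height functions are rescaled by a factor of $4$ (so their increments across edges are $3$ or $-1$ rather than $3/4$ or $-1/4$), and that they are normalized so that the value at the face $(0,0)$ is congruent to $0 \pmod 4$. Lemma~\ref{lem:lattice} is stated for height functions with integer values and a fixed reference covering $M_0$, which is exactly our situation: every $\vph \in \Phi$ is $h_M$ for some dimer covering $M$ of $G$ (with the fixed reference flow $[M_0]$ equal to $1/4$ on each edge, as fixed in Section~\ref{sec:example}), scaled by $4$.

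First I would observe that taking a pointwise minimum or maximum commutes with the global scaling by $4$, so it suffices to check the statement for the unscaled, $\frac14\mathbb{Z}$-valued height functions; there Lemma~\ref{lem:lattice} applies verbatim to give that $\vph_1 \wedge \vph_2$ and $\vph_1 \vee \vph_2$ are again dimer height functions for the reference covering $M_0$. Second, I would verify the normalization is preserved: if $\vph_1(0,0) \equiv \vph_2(0,0) \equiv 0 \pmod 4$, then whichever of $\vph_1(0,0), \vph_2(0,0)$ is selected at the face $(0,0)$ by the min or the max, the resulting value is still $\equiv 0 \pmod 4$, so $\vph_1\wedge\vph_2$ and $\vph_1\vee\vph_2$ lie in $\Phi$. (Recall that, as noted in Section~\ref{sec:example}, once the value at $(0,0)$ is fixed modulo $4$, the admissibility congruence at every other face is automatic, since crossing any edge changes the height by $3$ or $-1$, both $\equiv -1 \pmod 4$; so no further congruence checks are needed.)

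I do not expect any genuine obstacle here, since the real combinatorial content — that the pointwise extremum of two height functions is again a height function, i.e. that the flow $[\,\cdot\,]-[M_0]$ stays divergence-free and respects the allowed increments around every vertex — is entirely contained in Lemma~\ref{lem:lattice}. The only thing to be careful about is making explicit that $\Phi$, as defined in this section, is precisely the image under the rescaling-and-normalization of the set of integer-valued dimer height functions covered by Lemma~\ref{lem:lattice}, so that the earlier lemma is genuinely applicable; this is a matter of matching conventions rather than proving anything new. Accordingly the write-up will be short: cite Lemma~\ref{lem:lattice}, handle the factor of $4$, and check the $\pmod 4$ normalization at the basepoint.
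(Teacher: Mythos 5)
Your proposal is correct and matches the paper, which gives no separate argument for this lemma beyond declaring it a rephrasing of Lemma~\ref{lem:lattice}; your added bookkeeping (the factor of $4$ rescaling commuting with pointwise min/max, and the $\bmod\ 4$ normalization at the base face being preserved) is exactly the routine convention-matching that the paper leaves implicit.
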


Define $h(x,t;\vph,\omega)$ as the height at position $x$ and time $t$ of the deterministic height process with an initial configuration $h(\cdot, 0)=\vph\in \Phi$ and Bernoulli mark $\omega\in \Omega$.
 
Since $\vph\in \Phi$ is well defined up to a global additive constant that is a multiple of $4$, for all $k\in \mbb Z$,
\ba{
h(x,t;\vph+4k, \omega)=h(x,t;\vph, \omega) +4k.\label{moveup}
}

The following ``monotonicity'' lemma is the deterministic version of Lemma~\ref{lem:monotone}.
\begin{lem}[Monotonicity]\label{lem:monotone2}
Given $\vph_1, \vph_2\in \Phi$, if $\vph_1\leq \vph_2$, then $h(\cdot, t;\vph_1,\omega)\leq h(\cdot, t;\vph_2,\omega)$ for all $t\in \mbb N$.
\end{lem}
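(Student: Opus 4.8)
The plan is to prove this by induction on $t\in\mbb N$, with Lemma~\ref{lem:monotone} — monotonicity under a single local move — as the engine. Fix the Bernoulli mark $\omega\in\Omega$. The base case $t=0$ is exactly the hypothesis $\vph_1\le\vph_2$. For the inductive step I would assume $h(\cdot,t;\vph_1,\omega)\le h(\cdot,t;\vph_2,\omega)$ and deduce the same inequality at time $t+1$. The configurations at time $t+1$ arise from those at time $t$ by performing one shuffle followed by the global subtraction of $2$; since the latter is applied identically to both processes it cannot disturb an inequality, so it is enough to show that a shuffle preserves the ordering.

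By definition (Section~\ref{sec:example}) a shuffle is a finite sequence of global operations, each of which is a $T$-periodic family of pairwise non-overlapping local moves, either vertex contractions or spider moves. I would list all of these local moves in increasing order of distance from the origin — as in the discussion of well-definedness of global operations on $G$ in Section~\ref{sec:dimershuffle}, so that the height at each face is determined after finitely many of them — and run them one at a time, on $h(\cdot,t;\vph_1,\omega)$ and $h(\cdot,t;\vph_2,\omega)$ simultaneously, relative to a common reference covering. After each local move the inequality survives: a vertex contraction/expansion changes no height (Lemma~\ref{lem:locheight}), and a spider move alters only its inner face, at which the two coverings are coupled — both processes execute that move according to the one coordinate of $\omega$ that governs it — so Lemma~\ref{lem:monotone} applies. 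Composing over the sequence gives $h(\cdot,t+1;\vph_1,\omega)\le h(\cdot,t+1;\vph_2,\omega)$, closing the induction.

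A few bookkeeping items require care rather than ingenuity. Since $\vph_1,\vph_2\in\Phi$ are genuine domino height functions, they come from dimer coverings $M_1,M_2$, so the dimer-level Lemma~\ref{lem:monotone} may be invoked; throughout, both processes must be described against the \emph{same} reference covering, which is legitimate because the chosen reference covering is $T$-periodic, its deterministic update under each local move does not depend on $\vph_i$, and by the Assumption of Section~\ref{sec:dimershuffle} it returns to the original one after each shuffle. Moreover Lemma~\ref{lem:monotone} really concerns the gauge-independent difference $h_{M_1}-h_{M_2}$, which is precisely what is tracked, so additive-constant ambiguities (the $-2$ normalization included) never interfere. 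The one point I would actually need to verify — and it is immediate from the construction of $h(\cdot,\cdot;\vph,\omega)$ — is that the coupling hypothesis of Lemma~\ref{lem:monotone}, that corresponding faces share randomness during each spider move, holds: a given face undergoes exactly one spider move per shuffle, and that move's randomness is dictated by a coordinate of $\omega$ that is independent of $\vph$, so the two processes are automatically coupled face by face. I do not anticipate any genuine obstacle; the content is carried entirely by Lemmas~\ref{lem:monotone} and~\ref{lem:locheight}, and the remaining work is in assembling them correctly through the layers of the shuffle.
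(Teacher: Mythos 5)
Your proposal is correct and matches the paper's intent exactly: the paper gives no written proof of Lemma~\ref{lem:monotone2}, simply calling it ``the deterministic version of Lemma~\ref{lem:monotone},'' and your induction on $t$ — decomposing each shuffle into coupled local moves, invoking Lemma~\ref{lem:monotone} at each spider move and Lemma~\ref{lem:locheight} at each contraction, and noting the uniform $-2$ shift is harmless — is precisely the argument being left implicit. The bookkeeping you flag (common $T$-periodic reference covering, coupling via the shared coordinate of $\omega$) is handled correctly.
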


Now we state a simple but crucial  lemma, which states that  information propagates at linear speed. This can also be easily generalized to other shuffling height processes. 
\begin{lem}[Linear propagation]\label{lem:linear}
Given $\vph_1, \vph_2\in \Phi$ and $x\in \mbb Z^2$, if $\vph_1(y)=\vph_2(y)$ for all $y$ such that $|x-y|_1 \leq R$, then $h(y, t;\vph_1,\omega)= h(y, t;\vph_2,\omega)$ for all $y$ such that $|x-y|_1\leq R-2t$.
\end{lem}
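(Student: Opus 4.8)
The plan is to induct on $t$. The base case $t=0$ is immediate: if $\vph_1(y)=\vph_2(y)$ for all $y$ with $|x-y|_1\le R$, then in particular $h(y,0;\vph_1,\omega)=\vph_1(y)=\vph_2(y)=h(y,0;\vph_2,\omega)$ for all such $y$, and $R-2\cdot 0 = R$.

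For the inductive step, suppose the claim holds at time $t$ and we want it at time $t+1$. By the structure of a shuffle established in Section~\ref{sec:example}, the height at a face $y$ after one shuffle is determined by a deterministic (given $\omega(y,t)$ and $\omega(y',t)$ for the relevant neighbors) local rule — the entries of Table~\ref{tab:1} — which reads only the heights at $y$ itself and at its four nearest neighbors $y\pm e_1, y\pm e_2$ (more precisely, one must track which faces are consulted across Steps 1--4 of a shuffle; because each spider move only looks at faces neighboring the spider face, and the two rounds of spider moves are on even then odd faces, the total "reach" of one shuffle is at most two lattice steps in $\ell_1$ distance). Thus there is a constant $c\le 2$ such that $h(y,t+1;\vph,\omega)$ depends on $\vph$ and $\omega$ only through the values $h(\,\cdot\,,t;\vph,\omega)$ on the $\ell_1$-ball of radius $c$ about $y$. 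I would state and prove this "finite range of dependence of a single shuffle" as the key sub-claim (either inline or by direct inspection of the four steps, using Lemma~\ref{lem:locheight} to see that only faces undergoing a spider move change and only neighboring faces are consulted).

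Granting the sub-claim: fix $y$ with $|x-y|_1 \le (R) - 2(t+1) = (R-2t) - 2$. For any $z$ in the $\ell_1$-ball of radius $c\le 2$ about $y$ we have $|x-z|_1 \le |x-y|_1 + |y-z|_1 \le (R-2t) - 2 + 2 = R-2t$, so by the inductive hypothesis $h(z,t;\vph_1,\omega) = h(z,t;\vph_2,\omega)$. Since the shuffle rule at $y$ between times $t$ and $t+1$ depends only on these values (and on $\omega(\cdot,t)$, which is the same for both processes), we conclude $h(y,t+1;\vph_1,\omega) = h(y,t+1;\vph_2,\omega)$. This closes the induction.

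The only genuine point requiring care — the main obstacle — is pinning down the exact propagation constant of a single shuffle and confirming it is $\le 2$. A shuffle is a composition of four sub-operations (two rounds of spider moves interleaved with vertex contractions), so naively one might fear the reach compounds; one must check that the composite rule summarized in Table~\ref{tab:1} indeed reads only the five faces $(i,j)$ and $(i\pm 1, j), (i, j\pm 1)$, equivalently that the "footprint" of one shuffle is contained in the $\ell_1$-ball of radius $2$. Once this is verified from the table (or from Lemma~\ref{lem:locheight} applied step by step), the rest is the routine induction above.
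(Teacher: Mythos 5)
Your proof is correct and follows essentially the same route as the paper's: identify that a single shuffle (Step 1 updating even faces from their four neighbors, Step 3 updating odd faces from theirs) has propagation radius at most $2$ in $\ell_1$ distance, then induct on $t$. The paper's proof is exactly this observation plus the same induction, so nothing further is needed.
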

\begin{proof}
During each shuffle, there are two rounds of height updates. In the first step, all even faces $(i,j)$ get modified. But since $\omega$ is given, the new height at $(i,j)$ is just a function of the heights of its four neighbors. Similarly, in the third step, heights at odd faces $(i,j)$ are updated according to its four neighbors. Therefore, the new height at any face $x$ after a shuffle is just a function of original heights at $y$ where $|y-x|_1\leq 2$. Now the statement follows by an induction on $t$.
\end{proof}

Combing the few statements, we deduce a ``localization'' property of shuffling height processes.
\begin{prop}[Localization]\label{prop:local}
Assume $k \in \mbb N$.
~\begin{enumerate}
\item{ Given $\vph_1, \vph_2\in \Phi$ and $x\in\mbb Z^2$, if $|\vph_1(y)-\vph_2(y)|\leq 4k$ for all $y$ such that $|x-y|_1\leq R$, then $|h(y, t;\vph_1,\omega)- h(y, t;\vph_2,\omega)|\leq 4k$ for all $y$ such that $|x-y|_1\leq R-2t$;}
\item{ Given $\vph_1, \vph_2\in \Phi$, if $|\vph_1(y)-\vph_2(y)|\leq 4k$ for all $y$, then $|h(y, t;\vph_1,\omega)- h(y, t;\vph_2,\omega)|\leq 4k$ for all $y$.}
\end{enumerate} 
\end{prop}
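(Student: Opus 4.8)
The plan is to derive both statements of Proposition~\ref{prop:local} from the monotonicity lemma (Lemma~\ref{lem:monotone2}), the vertical-shift identity \eqref{moveup}, the lattice property (Lemma~\ref{lem:lattice2}), and — for part (1) — the linear propagation lemma (Lemma~\ref{lem:linear}). The key trick is that a two-sided bound $|\vph_1 - \vph_2| \le 4k$ is equivalent to the pair of one-sided bounds $\vph_1 - 4k \le \vph_2 \le \vph_1 + 4k$, and these one-sided bounds involve only \emph{vertical translates} of admissible height functions, which remain admissible and along which the evolution commutes with the shift.

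First I would prove part (2), the global statement. Assume $|\vph_1(y) - \vph_2(y)| \le 4k$ for all $y$. Then $\vph_1 - 4k \le \vph_2 \le \vph_1 + 4k$ pointwise. By \eqref{moveup}, $\vph_1 \pm 4k$ (viewed as $\vph_1 + 4(\pm k)$) are again elements of $\Phi$, so Lemma~\ref{lem:monotone2} applies to each inequality with the \emph{same} Bernoulli mark $\omega$, giving
\[
h(\cdot, t; \vph_1, \omega) - 4k = h(\cdot, t; \vph_1 - 4k, \omega) \le h(\cdot, t; \vph_2, \omega) \le h(\cdot, t; \vph_1 + 4k, \omega) = h(\cdot, t; \vph_1, \omega) + 4k,
\]
where the two outer equalities are exactly \eqref{moveup}. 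This says $|h(y,t;\vph_1,\omega) - h(y,t;\vph_2,\omega)| \le 4k$ for all $y$, which is part (2).

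For part (1), the localized statement, the obstacle is that monotonicity is inherently a global comparison: one cannot directly feed in a bound that holds only on the ball $\{|x - y|_1 \le R\}$. The fix is to localize \emph{before} comparing. Given $\vph_1, \vph_2$ agreeing up to $4k$ on that ball, I would introduce an auxiliary configuration $\tilde\vph_2$ that equals $\vph_2$ on the ball $\{|x-y|_1 \le R\}$ but is modified outside so that $|\vph_1 - \tilde\vph_2| \le 4k$ \emph{everywhere} while keeping $\tilde\vph_2 \in \Phi$ — concretely one can take $\tilde\vph_2 := (\vph_1 + 4k) \wedge (\vph_2 \vee (\vph_1 - 4k))$, which by Lemma~\ref{lem:lattice2} lies in $\Phi$, agrees with $\vph_2$ wherever $|\vph_1 - \vph_2| \le 4k$ (in particular on the ball), and satisfies the two-sided bound against $\vph_1$ globally. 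Applying part (2) to the pair $(\vph_1, \tilde\vph_2)$ gives $|h(y,t;\vph_1,\omega) - h(y,t;\tilde\vph_2,\omega)| \le 4k$ for all $y$ and $t$. Separately, since $\tilde\vph_2 = \vph_2$ on $\{|x-y|_1 \le R\}$, Lemma~\ref{lem:linear} gives $h(y,t;\tilde\vph_2,\omega) = h(y,t;\vph_2,\omega)$ for all $y$ with $|x-y|_1 \le R - 2t$. Combining the two yields $|h(y,t;\vph_1,\omega) - h(y,t;\vph_2,\omega)| \le 4k$ on $\{|x-y|_1 \le R - 2t\}$, which is part (1).

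The only genuinely delicate point is verifying that the auxiliary $\tilde\vph_2$ does what is claimed: that the lattice operations preserve admissibility (immediate from Lemma~\ref{lem:lattice2}, since $\vph_1 \pm 4k \in \Phi$) and that $\tilde\vph_2$ coincides with $\vph_2$ exactly on the set where $|\vph_1 - \vph_2| \le 4k$ (a pointwise check: if $\vph_2(y) \in [\vph_1(y) - 4k, \vph_1(y) + 4k]$ then both the $\vee$ and the $\wedge$ are inert at $y$). Everything else is a routine application of the three cited lemmas, so I do not expect any real difficulty; this proposition is essentially a bookkeeping corollary packaging monotonicity and finite speed of propagation into the form needed later.
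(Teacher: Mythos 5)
Your proof is correct and uses essentially the same ingredients as the paper's: lattice surgery via Lemma~\ref{lem:lattice2} to build an admissible auxiliary configuration, monotonicity together with the shift identity \eqref{moveup}, and linear propagation to localize. The only difference is organizational — the paper proves part (1) directly via two one-sided comparisons with $\vph_1\vee(\vph_2+4k)$ and obtains part (2) as the case $R=\infty$, whereas you prove (2) first and then derive (1) with the two-sided clamp $(\vph_1+4k)\wedge(\vph_2\vee(\vph_1-4k))$ — an equally valid packaging of the same argument.
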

\begin{proof}
Let $\vph_3=\vph_1\vee(\vph_2+4k)$, which is admissible by Lemma~\ref{lem:lattice2}. Since $\vph_1(y)\leq \vph_2(y)+4k$ for all $y$ such that $|x-y|_1\leq R$, we have $\vph_3(y)=\vph_2(y)+4k$ for all such $y$. By Lemma~\ref{lem:linear}, $h(y, t;\vph_3,\omega)= h(y, t;\vph_2+4k,\omega)$ for all $y$ such that $|x-y|_1\leq R-2t$. Therefore, for all such $y$,
\al{
h(y,t;\vph_1,\omega)&\leq h(y,t;\vph_3,\omega)\\
&=h(y, t;\vph_2+4k,\omega)\\
&=h(y, t;\vph_2,\omega)+4k.\;\;\;\;\;\;\;\text{(by \eqref{moveup})}
}

The other inequality can be proved similarly, so the first statement holds. The second statement follows by taking $R=\infty$.
\end{proof}

Another property of the height process is that the vertical drift speed is linearly bounded.
\begin{lem}[Vertical speed bound]\label{lem:vert}
For any $\vph\in \Phi$, $x\in\mbb Z^2$ and $t\in\mbb N$, 
\al{\vph(x)-4t\leq h(x,t;\vph,\omega)\leq \vph(x).}
\end{lem}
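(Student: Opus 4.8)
The plan is to argue by induction on $t$, the crux being the one-step claim that \emph{a single shuffle changes the height at any fixed face by either $0$ or $-4$}. Granting this, I would telescope: $h(x,t;\vph,\omega)=\vph(x)+\sum_{s=1}^{t}\big(h(x,s;\vph,\omega)-h(x,s-1;\vph,\omega)\big)$, where every summand lies in $\{0,-4\}$, so the partial sums lie in $[-4t,0]$; this is exactly $\vph(x)-4t\le h(x,t;\vph,\omega)\le\vph(x)$.

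For the one-step claim I would fix a face $x$ and a time $s\ge 1$ and use that $h(\cdot,s-1;\vph,\omega)$ is an admissible domino height function. Say first $x$ is an even face. By Lemma~\ref{lem:locheight} the two vertex-contraction steps and the odd-face spider moves of Step~3 do not change the height at $x$, so the only place a change can occur is Step~1, where $x$ is the inner face of a spider move; the outcome there is determined by the heights of $h(\cdot,s-1;\vph,\omega)$ at $x$ and its four neighbours. Since those are the values of an admissible domino height function, the local picture must be one of the rows of Table~\ref{tab:1}, and in every row the new height at $x$ is $h$ or $h-4$. If $x$ is odd, the same reasoning applies with the roles of Steps~1 and~3 exchanged, using that Steps~1--2 leave the height at $x$ untouched, so that the ``left column'' of Table~\ref{tab:1} records exactly the time-$(s-1)$ local heights around $x$. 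In both cases the ``subtract $2$ after every shuffle'' normalization has already been folded into Table~\ref{tab:1}, so the net change of the height at $x$ across one shuffle is in $\{0,-4\}$ (in particular its residue mod $4$ is preserved, as it must be).

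The main --- though quite mild --- obstacle is bookkeeping: keeping the $-2$-per-shuffle normalization and the accompanying mod-$4$ shift of the height function straight, so that the per-step increment is pinned down to be exactly $\{0,-4\}$ and not merely ``bounded''. This is why it is cleanest to take Table~\ref{tab:1} as the working description of one shuffle at a face and read the bound directly off it; the one-sidedness of the increment, which is what yields the upper bound $h(x,t;\vph,\omega)\le\vph(x)$, reflects the specific structure of the domino shuffle rather than anything soft. The same inductive scheme gives the analogous vertical-speed bound for the general shuffling height processes of Section~\ref{sec:dimershuffle}, two-sided in those cases where the per-shuffle increment at a face need not be sign-definite.
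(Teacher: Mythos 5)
Your proof is correct and takes essentially the same route as the paper, which simply observes that the bound "follows easily from Table~\ref{tab:1} and the same table at odd faces"; your telescoping argument is just a careful write-up of that one-line observation, reading off from every row of the table that the per-shuffle increment at a face lies in $\{0,-4\}$.
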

\begin{proof}
This follows easily from Table~\ref{tab:1} and the same table at odd faces $(i,j)$.
\end{proof}

 Define the space translation operator $\tau_y$ for $y\in \mbb Z^2$ on both height functions $h$ and $\omega\in \Omega$ by
\al{
\tau_yh(x)=h(x-y), \tau_y\omega(x,t)=\omega(x-y,t)
}
for every $x\in \mbb Z^2$ and $t\in \mbb Z$. Then we have 
\ba{
h(x-2y,t;\vph,\omega)=h(x,t;\tau_{2y}\vph, \tau_{2y}\omega)
}
for every $x,y\in \mbb Z^2$, $\vph\in \Phi$, $\omega\in \Omega$. The factor $2$ is present so that $\tau_{2y}\vph\in \Phi$.

Another observation is that $h$ satisfies a semigroup-like property. Define the time translation operator $\gamma_s$ for $s\in \mbb N$ on $\omega$, by $\gamma_s\omega(x,t)=\omega(x,t+s)$. Then for all $s,t\in \mbb N$, $s\leq t$,
\ba{
h(x,t;\vph,\omega)=h(x,t-s;h(\cdot,s;\vph,\omega),\gamma_s\omega).\label{semi}
}

\subsection{Smoothing out the height process spatially}\label{sec:smoothspace}

Define the pyramid height function $v:\mbb Z^2\rar \mbb Z$ to be
\al{
v\lp x\rp=\min\lc \vph(x):\vph\in \Phi, \vph(0)=0\rc.
}
Here we can take pointwise minimum because the height difference between 0 and $x$ is a bounded integer. This is an admissible height function due to Lemma~\ref{lem:lattice2}. To help visualize, the corresponding dimer covering is shown in Figure~\ref{fig:pyramid}.
\begin{figure}[h]
\caption{The pyramid height function $v$}
\includegraphics[scale=1.8]{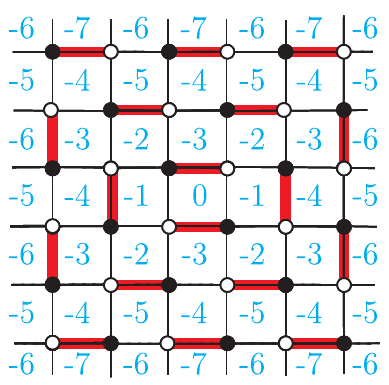}
\centering
\label{fig:pyramid}
\end{figure}

To convince ourselves that this is exactly $v$, notice that from the origin to any other face, there exists a face path such that the height decreases by 1 every step. Since the height can either decrease by 1 or increase by 3 at those steps, we have the correct height at every face. Observe that $v(x)=v(-x)$.

A more constructive way to describe $v$ that also works for  different graphs $G$ is the following. Consider the boundary vertices of the Newton polygon $\lc x: |x|_1= 2\rc$. Each of them corresponds to a covering on $G_1$. Lift them to periodic coverings on $G$, and find their height functions that equal to $0$ at the origin. Now take the pointwise minimum.

Fix $n\in \mbb Z_{>0}$. Let us start with an input function $g\in \Gamma$. We will define a height function $\vph_g^n\in \Phi$ close to $g$, and use it as the initial condition for the height process. Set
\al{
\Phi_g&:=\lc \vph\in \Phi, \vph=\tau_{y}v +k  \text{ for some }y\in \mbb Z^2, k\in \mbb Z \text{ such that }k\leq ng\lp\fr{y}{n}\rp\rc,\\
\vph_g^n(x)&:=\max_{\vph\in \Phi_g}\lc\vph(x)\rc.
}
To see that $\vph^n_g$ is well defined,  take any $x, y\in \mbb Z^2$, and $k\in \mbb Z$ such that $k\leq ng\lp \fr yn\rp$. Since $g$ is 2-spatially-Lipschitz, 
\ba{
\left|ng\lp\fr y n\rp-ng\lp\fr x n\rp \right|&\leq 2n\left|\fr y n-\fr x n\right|_\infty=2\left| y - x \right|_\infty,\nonumber\\
\Rightarrow ng\lp\fr y n\rp&\leq ng\lp\fr x n\rp+ 2\left| y - x \right|_\infty.\label{2lip}
}

Then observe that $v(x)\leq -2|x|_\infty+1$, so we have
\ba{(\tau_yv)(x)+k&\leq -2|x-y|_\infty+1+ng\lp\fr yn\rp\nonumber\\
&\leq -2|x-y|_\infty+1+ng\lp\fr x n\rp+ 2\left| y - x \right|_\infty=1+ng\lp\fr x n\rp\label{vphupb}
}
where we used \eqref{2lip} for the second inequality. 

On the other hand, there must exist $k\in \mbb Z$ such that $ng\lp \fr xn\rp-4 < k\leq ng\lp \fr xn\rp$ and $\tau_x v+k\in \Phi$, and with such $k$,
\ba{
(\tau_x v)(x)+k=k>ng\lp \fr xn\rp-4. \label{vphlob}
}

Combining \eqref{vphupb} and \eqref{vphlob}, we conclude that 
\ba{ng\lp \fr xn\rp-4<\vph^n_g(x)\leq ng\lp\fr x n\rp+1. \label{g2hbd}}

Due to Lemma~\ref{lem:lattice2}, $\vph^n_g$ is an admissible height function. Furthermore, we can show that $\vph^n_g$ is determined locally by $g$ at every $x$. This property  will be important in Section~\ref{sec:smoothtime}.
\begin{prop}\label{prop:locdef}
Given $g\in \Gamma$, $\forall x\in \mbb Z^2$,
\al{
\vph_g^n(x)=\vph(x)
}
for some $\vph\in \Phi$ such that $\vph=\tau_{y}v+k$, $\left|y-x\right|_1\leq 1$, $k\leq ng\lp \fr{y}n\rp$.
\end{prop}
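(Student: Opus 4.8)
The plan is to show that the maximum defining $\vph_g^n(x)$ is always attained (or can be attained) by some $\vph = \tau_y v + k$ with $y$ very close to $x$, specifically $|y-x|_1 \le 1$. Suppose $\vph_g^n(x) = (\tau_y v)(x) + k$ for some $y \in \mbb Z^2$ and $k \le ng(y/n)$; I want to replace $y$ by a point $y'$ adjacent to (or equal to) $x$ without decreasing the value at $x$. The key arithmetic fact is the one already recorded in the derivation of \eqref{vphupb}: since $v(z) \le -2|z|_\infty + 1$ and $g$ is $2$-spatially-Lipschitz, the shifted pyramid $\tau_y v + k$ lies pointwise below $1 + ng(\cdot/n)$, and more importantly it decreases by (at least) $2$ in $\ell_\infty$-distance as we move away from $y$. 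So if $y$ is far from $x$, the ``slack'' $(\tau_y v)(x) + k$ versus $ng(x/n)$ is large and negative — but $\vph_g^n(x)$ is, by \eqref{g2hbd}, within $4$ of $ng(x/n)$, so the maximizing $\vph$ cannot come from a far-away $y$. This already shows $|y-x|_\infty$ is bounded by an absolute constant; the work is to push the bound down to $|y-x|_1 \le 1$.

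To get the sharp constant I would argue more carefully using the precise local shape of $v$. First I would reduce to the case that the maximum is attained (this is fine since the height difference between faces at bounded distance is a bounded integer, so only finitely many $(y,k)$ with $|y-x|$ bounded are relevant, and we just showed far-away $y$ are irrelevant). Then, given a maximizer $\tau_y v + k$ with $y \ne x$, pick a face $y'$ one step closer to $x$ along a shortest lattice path from $y$ to $x$, i.e. $|y' - x|_1 = |y-x|_1 - 1$ and $|y - y'|_1 = 1$. I then want to produce $k'$ with $k' \le ng(y'/n)$ and $(\tau_{y'} v)(x) + k' \ge (\tau_y v)(x) + k$. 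The natural choice is to set $k'$ as large as possible subject to $\tau_{y'} v + k' \in \Phi$ and $k' \le ng(y'/n)$; note moving $y$ one step changes the required residue of $k$ mod $4$ in a controlled way. The inequality $(\tau_{y'} v)(x) + k' \ge (\tau_y v)(x) + k$ should follow from: (a) $v(x - y') \ge v(x-y) + $ (something), because $x - y'$ is strictly closer to the origin than $x-y$ and along a shortest path $v$ increases — one must check $v$ actually strictly increases by the right amount, which is where the explicit pyramid picture (Figure~\ref{fig:pyramid}, $v(z) = -2|z|_\infty + (\text{correction})$) is used; and (b) $ng(y'/n) \ge ng(y/n) - 2$ by the $2$-Lipschitz bound \eqref{2lip}, so the admissible ceiling for $k'$ only drops by at most $2$ while $v$ at $x$ gains at least $2$. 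Combining (a) and (b), with the mod-$4$ bookkeeping, gives $(\tau_{y'} v)(x) + k' \ge (\tau_y v)(x) + k = \vph_g^n(x)$, hence equality, and $y'$ is strictly closer to $x$. Iterating, we reach either $y = x$ or $|y - x|_1 = 1$, which is exactly the claim.

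The main obstacle I anticipate is controlling the interaction between the integer quantization (the constraint $\tau_y v + k \in \Phi$, i.e. $k$ lies in a fixed residue class mod $4$ depending on $y$) and the real-valued ceiling $k \le ng(y/n)$: when I shift $y$ to $y'$ the allowed residue class for $k$ shifts, so the best admissible $k'$ might be forced down by up to $3$ relative to the naive bound $ng(y'/n)$, and I need to confirm that the gain $v(x-y') - v(x-y) \ge 1$ (coming from moving one step toward the origin, where $v$ increases by exactly $1$ along the monotone face path) together with the Lipschitz slack is still enough to compensate — i.e. that each single-step move toward $x$ does not lose height at $x$. Verifying this requires being careful that the shortest $\ell_1$-path from $y$ to $x$ can be chosen so that $x - y \to x - y'$ is a monotone step toward the origin in the sense that makes $v$ increase by exactly $1$; this is true because decreasing $|x-y|_\infty$ is possible at some coordinate along the path, and it is precisely the structure of $v$ as a pointwise minimum of periodic height functions over Newton-polygon vertices that guarantees the one-step increase. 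Once that bookkeeping is pinned down, the induction on $|y-x|_1$ closes immediately.
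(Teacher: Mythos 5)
Your strategy---walking the center $y$ of the pyramid one $\ell_1$-step at a time toward $x$ and arguing that each step can be made not to decrease the value at $x$---is genuinely different from the paper's, and the inductive step is in fact false; the gap sits exactly at the point you flag as bookkeeping to be pinned down. Per step, the gain $v(x-y')-v(x-y)$ can be exactly $1$ (this happens whenever $x-y$ has even coordinate sum and $|x-y'|_\infty=|x-y|_\infty$), the ceiling $ng(y'/n)$ can drop by the full Lipschitz amount $2$, and the mod-$4$ residue class attached to the new center can force the largest admissible $k'$ down by a further $3$; in the worst case every admissible $k'$ satisfies $k'\le k-5$ and the value at $x$ drops by $4$. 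This worst case occurs: take $n=1$, $x=(0,0)$, and $g(y_1,y_2)=-4+2\min(y_1,y_2)$, which is $2$-spatially-Lipschitz, with $g(2,2)=0$, $g(2,1)=g(1,2)=-2$, $g(0,0)=-4$. For the center $(2,2)$ admissibility forces $k\equiv 0\pmod 4$, so the best choice is $k=0$, giving value $v(-2,-2)+0=-4$ at the origin (which is in fact $\vph^1_g(0)$). For either neighbor on a shortest path to the origin, say $(2,1)$, admissibility forces $k'\equiv 3\pmod 4$ and $k'\le -2$, hence $k'\le -5$, giving value at most $v(-2,-1)-5=-8$ at the origin. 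So both single steps toward $x$ strictly lose, for every admissible $k'$, and the induction cannot close.

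What saves the proposition is a single global jump rather than a walk, and this is the paper's argument: re-center the pyramid directly at $x$ when $x-y$ has even coordinate sum (or at a neighbor $x'$ of $x$ with $|x'-y|_\infty=|x-y|_\infty-1$ in the odd case), and take the new constant to be the old function's value at the new center, e.g.\ $\vph_2=\tau_x v+k+v(x-y)$. Admissibility is then automatic, since $\vph_2$ agrees with the admissible $\vph_1$ at the new center, so no residue is ever sacrificed; and the ceiling constraint $k+v(x-y)\le ng(x/n)$ follows from $v(x-y)=-2|x-y|_\infty$ combined with the Lipschitz bound \eqref{2lip} applied over the whole distance at once, so there is no per-step quantization loss to accumulate. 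The value at $x$ is preserved via the additivity $v(x-x')+v(x'-y)=v(x-y)$ for the correctly chosen $x'$. In the example above the jump lands at constant $k+v(x-y)=-4\le g(0,0)$ and recovers the value $-4$. Your preliminary reduction (far-away centers are irrelevant) and your observation that a step can always be chosen so that $v(x-y')\ge v(x-y)+1$ are both correct, but the proof needs to be restructured around the direct jump.
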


\begin{proof}
Consider some $x\in \mbb Z^2$. By definition, there exists $y\in \mbb Z^2$, $k\in \mbb Z$ such that $\vph_1=\tau_yv+k\in \Phi_g$, $
\vph_g^n(x)=\vph_1(x)=v(x-y)+k$, and 
\ba{
k\leq ng\lp\fr{y}n\rp.\label{446}
}
Assume $|y-x|_1\geq 2$, otherwise there is nothing to prove.

Again, since $g$ is 2-spatially-Lipschitz, 
\ba{
ng\lp\fr x n\rp&\geq ng\lp\fr y n\rp-2\left| y - x \right|_\infty.\label{448}
}
If $v(x-y)= -2\left| y - x \right|_\infty$, then we claim that $\vph_2=\tau_x v+k+v(x-y)\in \Phi_g$ and $\vph_2(x)=\vph_1(x)$.

Indeed, we have $\vph_2(x)=v(x-x)+k+v(x-y)=k+v(x-y)$. Since $\vph_1\in \Phi$ and $\vph_1(x)=v(x-y)+k$, in particular $\vph_2$ is also in $\Phi$. Furthermore,
\ba{k+v(x-y)=k-2\left| y - x \right|_\infty\leq ng\lp\fr{y}n\rp-2\left| y - x \right|_\infty\leq ng\lp \fr xn\rp,}
by \eqref{446} and \eqref{448}, so the claim is true. The statement then follows with candidate $\vph_2$.

We are left with the case when $v(x-y)\neq -2\left| y - x \right|_\infty$. Observe that $v(x-y)= -2\left| y - x \right|_\infty$ iff $y-x=(i,j)$ with $i+j$ even. Therefore, if $v(x-y)= -2\left| y - x \right|_\infty$ does not hold for some $x$, there is a neighboring face $x'$ of $x$, such that $|x'-y|_\infty=|x-y|_\infty-1$, and $v(x'-y)= -2\left| y - x' \right|_\infty$. Let $\vph_3=\tau_{x'} v+k+v(x'-y)$. Since $\vph_3(x')=v(x'-x')+k+v(x'-y)=k+v(x'-y)=\vph_1(x')$, by the same argument as above, $\vph_3\in \Phi_g$.

We have $\vph_3(x)=\tau_{x'}v(x)+k+v(x'-y)=v(x-x')+k+v(x'-y)$. With the help of  Figure~\ref{fig:pyramid}, it is easy to verify that $v(x-x')+v(x'-y)=v(x-y)$, knowing that $|y-x|_1\geq 2$, $y-x=(i,j)$ with $i+j$ odd, $|x'-x|_1=1$,  and $|x'-y|_\infty=|x-y|_{\infty}-1$. Therefore, the statement holds with candidate $\vph_3$.
\end{proof}

With some fixed $s,t\in\mbb N$, $s< t$, $g\in \Gamma$, $\omega\in \Omega$, consider the height process
\al{
h\lp  x, t-s; \vph^n_g, \gamma_{s}\omega\rp
}
as a function of $x\in \mbb Z^2$ only. Its direct linear interpolation is not in $\Gamma$, because when $x$ changes by $1$, the function might change by $3$. Instead, we define a new function $\psi_{s,t}$ such that for all $x\in \mbb Z^2$,
\al{
 \psi_{s,t}(2x):= h\lp  2x, t-s; \vph^n_g, \gamma_{s}\omega\rp.
}
The function $\psi_{s,t}$, for now, is only defined on $2\mbb Z^2$, where $k\mbb Z^2$ for some constant $k$ denotes the set $\lc (ki, kj):(i,j)\in\mbb Z^2\rc$. In other words, $\psi_{s,t}$ agrees with the height process at all the $T$-translations of the origin. Then $\psi_{s,t}$ is 2-spatially-Lipschitz on $2\mbb Z^2$, by checking the pyramid height function. 

We want to further interpolate $\psi_{s,t}$ to a 2-spatially-Lipschitz function $\mbb R^2\rar \mbb R$. More specifically, given the heights at the four faces, listed in counterclockwise order,
\al{
(2i,2j), (2i+2, 2j),(2i+2,2j+2), (2i, 2j+2),
}
we first interpolate $\psi_{s,t}$ along the four sides linearly. Inside the square $[2i,2i+2]\times [2j,2j+2]$, we need to be careful about interpolating $\psi_{s,t}$ to keep it 2-spatially-Lipschitz. See Appendix~\ref{appenb} for an explicit interpolation.

Due to the finiteness of the fundamental domain,
\ba{\left|\psi_{s,t}(x)-h\lp x, t-s; \vph^n_g, \gamma_{s}\omega\rp\right|<{C_0}, \forall x\in \mbb Z^2 \label{psidif}} 
for some global constant $C_0>0$. We leave it as $C_0$ because this depends on the interpolation.

Now given $s,t\in \fr1n \mbb N$, $s<t$, we define
\al{
S_n(s,t;g,\omega)(x):=\fr1n\psi_{ns,nt}(nx).
}
Since $\psi_{ns, st}$ is in $\Gamma$, $S_n(s,t;g,\omega)$ is in $\Gamma$ as well.
From \eqref{psidif}, we deduce that
\ba{\left|S_n(s,t;g,\omega)(x)-\fr1n h\lp n x, n(t-s); \vph^n_g, \gamma_{ns}\omega\rp\right|<\fr{C_0}n, \forall x\in \fr1n\mbb Z^2.\label{sndif}}

Whenever $s,t\in \fr1n \mbb N$, $s\geq t$, simply define
\al{
S_n(s,t;g,\omega)(x):=g.
}

Next, we wish to interpolate $S_n$ with respect to the time variables $s$ and $t$. We want the interpolation to be continuous (even Lipschitz) in $s$ and $t$, but for this to make sense, we have to specify the image space and the topology on it.

\subsection{The space of continuous evolutions}\label{sec:contsemi}

Following  \cite{FR01}, we first define a general function space where we will embed the fixed-time evolutions of both the interpolated shuffling height process and the PDE.

Given $g_1, g_2\in \Gamma$ and $k\in \mbb N\cup \lc \infty\rc$, let
\ba{
\|g_1-g_2\|_{k}&:=\sup_{|x|_1\leq k}|g_1(x)-g_2(x)|,\label{gammak}\\
d(g_1,g_2)&:=\sum_{i=1}^\infty 2^{-i}\|g_1-g_2\|_i.\label{gammametric}
}

By definition of $\Gamma$, 
\al{
\|g_1-g_2\|_{k}&\leq \sup_{|x|_\infty\leq k}|g_1(x)-g_2(x)|\\
&\leq |g_1(0)-g_2(0)|+\sup_{|x|_\infty\leq k}|g_1(x)-g_1(0)|+\sup_{|x|_\infty\leq k}|g_2(x)-g_2(0)|\\
&\leq |g_1(0)-g_2(0)|+4k\numberthis.
\label{411}
}
So $\|g_1-g_2\|_k$ grows at most linearly with respect to $k$, and the sum in \eqref{gammametric} converges. It is clear that $d(g_1,g_2)=0$ iff $g_1=g_2$, and the triangle inequality is easy to check, so \eqref{gammametric} defines a metric on $\Gamma$.

Let $\ag E^r_q$ ($r,q\geq 0$) denote the space of functions $F:\Gamma\rar \Gamma$ with the following properties:
\ba{
1.\;\;&F(g+m)=F(g)+m\text{ for every constant }m.\label{prop1}\\
2.\;\;&F(g_1)\leq F(g_2)\text{ whenever }g_1\leq g_2.\label{prop2}\\
3.\;\;&\sup_{g\in \Gamma}\|F(g)-g\|_0\leq r<\infty.\label{prop3}\\
4.\;\;&\text{If }g_1(x)=g_2(x)\text{ for all }x\text{ with }|x|_1\leq R,\text{ where } R\geq q, \text{ then } F(g_1)(x)=F(g_2)(x)\nonumber\\& \text{ for all } x \text{ with } |x|_1 \leq R-q. \label{prop4}
}

\begin{remn} Notice the similarities between these properties and \eqref{moveup}, Lemma~\ref{lem:monotone2}, Lemma~\ref{lem:vert}, Lemma~\ref{lem:linear}. \label{rem:simi}
\end{remn}

We can define a metric $D$ on $\ag E^r_q$. Given $F_1, F_2\in \ag E^r_q$, let
\al{
\|F_1-F_2\|_k&:=\sup_{g\in \Gamma}\|F_1(g)-F_2(g)\|_k,\\
D(F_1, F_2)&:= \sum_{i=1}^\infty 2^{-i} \fr{\|F_1-F_2\|_i}{1+\|F_1-F_2\|_i}.\numberthis\label{Ddef}
}

By \eqref{411}, 
\al{\|F_1(g)-F_2(g)\|_k&\leq |F_1(g)(0)-F_2(g)(0)|+4k\\
&=|(F_1(g)(0)-g(0))-(F_2(g)(0)-g(0))|+4k\\
&\leq 2r+4k
}
by Property~\eqref{prop3}. So $\|F_1-F_2\|_k$ is finite, and $D(F_1, F_2)$ is well defined. The triangle inequality is easy to check, and $D(F_1, F_2)=0$ iff $F_1(g)=F_2(g)$ for every $g\in \Gamma$.

There is a  ``localization'' lemma for functions in $\ag E$.

\begin{lem}[Localization]\label{lem:local}
Given any $F\in \ag E_q^r$, $g_1, g_2\in \Gamma$, $R\geq q$,
\al{
\|F(g_1)-F(g_2)\|_{R-q}\leq \|g_1-g_2\|_{R}.
}
In particular, $\|F(g_1)-F(g_2)\|_\infty \leq \|g_1-g_2\|_\infty$.
\end{lem}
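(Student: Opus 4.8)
The plan is to prove the inequality $\|F(g_1)-F(g_2)\|_{R-q}\le \|g_1-g_2\|_R$ by a sandwiching argument using Properties \eqref{prop1}, \eqref{prop2} and \eqref{prop4}, exactly mirroring the proof of Proposition~\ref{prop:local} for the discrete height process. Set $c:=\|g_1-g_2\|_R=\sup_{|x|_1\le R}|g_1(x)-g_2(x)|$ (assume $c<\infty$; if $R=\infty$ and $c=\infty$ there is nothing to prove, and the second statement follows by taking $R\to\infty$ in the first). The key observation is that on the ball $\{|x|_1\le R\}$ we have $g_2(x)\le g_1(x)+c$, but this pointwise bound holds only on that ball, not everywhere, so we cannot directly apply monotonicity to $g_1+c$ and $g_2$.

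The fix is to introduce an auxiliary function that agrees with $g_1+c$ on $\{|x|_1\le R\}$ and dominates $g_2$ everywhere. First I would define $g_3:=g_1\vee(g_2-c)$... but wait, we need $g_3\in\Gamma$, which requires $\Gamma$ to be closed under pointwise maximum; since $\Gamma$ is the set of $2$-spatially-Lipschitz functions, the pointwise max of two such functions is again $2$-spatially-Lipschitz, so $g_3\in\Gamma$. On $\{|x|_1\le R\}$ we have $g_2(x)-c\le g_1(x)$, hence $g_3(x)=g_1(x)$ there; wait, that's not quite it either — I actually want a function agreeing with $g_2$ on the ball. Let me instead take $g_3 := g_1 \vee (g_2 - c)$: then since $g_1 \le g_3$, monotonicity \eqref{prop2} gives $F(g_1)\le F(g_3)$; and since $g_3$ agrees with $g_1$ on $\{|x|_1 \le R\}$ (because $g_2 - c \le g_1$ there), Property \eqref{prop4} gives $F(g_3)(x) = F(g_1)(x)$ for $|x|_1 \le R-q$. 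On the other hand $g_3 \ge g_2 - c$ everywhere, so by \eqref{prop2} and \eqref{prop1}, $F(g_3) \ge F(g_2 - c) = F(g_2) - c$. Combining, for $|x|_1 \le R-q$: $F(g_1)(x) = F(g_3)(x) \ge F(g_2)(x) - c$, i.e. $F(g_2)(x) - F(g_1)(x) \le c$. By the symmetric argument (swapping the roles of $g_1$ and $g_2$) we get $F(g_1)(x) - F(g_2)(x) \le c$ for $|x|_1 \le R - q$, hence $\|F(g_1) - F(g_2)\|_{R-q} \le c = \|g_1 - g_2\|_R$.

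For the final sentence, I would simply apply the first statement with $R = \infty$: since the hypothesis $R \ge q$ is satisfied and $\{|x|_1 \le \infty\} = \mathbb{R}^2$, we get $\|F(g_1) - F(g_2)\|_\infty \le \|g_1 - g_2\|_\infty$. (If $\|g_1-g_2\|_\infty = \infty$ the bound is vacuous; otherwise the argument above goes through verbatim with $R=\infty$, $R-q$ interpreted as $\infty$.)

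I don't anticipate a serious obstacle here — this is a routine "finite speed of propagation plus monotonicity plus translation equivariance" argument. The only point requiring a moment of care is the choice of the auxiliary function $g_3$ and verifying it lands in $\Gamma$ (closure of $2$-spatially-Lipschitz functions under pointwise max) and simultaneously (i) equals $g_1$ on the relevant ball and (ii) dominates $g_2 - c$ globally; getting the direction of the maximum versus the shift right is the one place an off-by-a-sign slip could occur, so I would double-check that $g_2 - c \le g_1$ on $\{|x|_1 \le R\}$ follows from the definition of $c$.
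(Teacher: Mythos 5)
Your proof is correct and is essentially the paper's argument: the paper also first notes that $\Gamma$ is closed under $\vee$ and $\wedge$ and then runs exactly this sandwich (it uses the mirror-image auxiliary function $g_1\vee(g_2+c)$, which agrees with $g_2+c$ on the ball, rather than your $g_1\vee(g_2-c)$, which agrees with $g_1$ there — the two are interchangeable). The one point you flagged as needing care, closure of $\Gamma$ under pointwise max, is precisely the step the paper writes out in detail, and your sign bookkeeping is right.
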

\begin{proof}
We first show that $\Gamma$ is closed under $\vee$ and $\wedge$ operations. Given $g_1, g_2\in \Gamma$, $x,y\in \mbb R^2$, if $g_1(x)\geq g_2(x)$, $g_1(y)\geq g_2(y)$, then 
\al{
|(g_1\vee g_2) (x)-(g_1\vee g_2)(y)|=|g_1(x)-g_1(y)|\leq 2|x-y|_\infty.
}

If $g_1(x)\geq g_2(x)$, $g_1(y)\leq g_2(y)$, then
\al{
|(g_1\vee g_2) (x)-(g_1\vee g_2)(y)|=|g_1(x)-g_2(y)|.
}

Since
\al{
-2|x-y|_\infty\leq g_1(y)-g_1(x)\leq g_2(y)-g_1(x)\leq g_2(y)-g_2(x)\leq 2|x-y|_\infty,
}
we duduce that
\al{
|(g_1\vee g_2) (x)-(g_1\vee g_2)(y)|&\leq 2|x-y|_\infty.
}
The other two cases are similar, so we conclude that $\Gamma$ is closed under $\vee$. Taking negative shows closedness under $\wedge$. By Remark~\ref{rem:simi}, the rest of the proof proceeds in exactly the same way as the proof of Proposition~\ref{prop:local}.
\end{proof}

\begin{lem}\label{lem:lipg}
Functions $F$ in $\ag E_q^r$ are $2^q$-Lipschitz continuous.
\end{lem}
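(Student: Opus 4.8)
The plan is to show that $F \in \ag E_q^r$ is $2^q$-Lipschitz with respect to the metric $D$, i.e. that $D(F(g_1), F(g_2)) \leq 2^q D(g_1, g_2)$ — wait, that is not quite what "Lipschitz continuous" should mean here since $F$ maps $\Gamma$ to $\Gamma$, so the relevant inequality is $d(F(g_1), F(g_2)) \leq 2^q\, d(g_1, g_2)$ for all $g_1, g_2 \in \Gamma$, where $d$ is the metric on $\Gamma$ defined in \eqref{gammametric}. So the first step is to reduce the claim to a pointwise-norm estimate: it suffices to bound $\|F(g_1) - F(g_2)\|_k$ in terms of the norms $\|g_1 - g_2\|_j$ and then sum against the weights $2^{-i}$.

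The key estimate comes straight from Lemma~\ref{lem:local}: for every $k \geq 0$ we have $\|F(g_1) - F(g_2)\|_k \leq \|g_1 - g_2\|_{k+q}$ (apply the lemma with $R = k+q \geq q$). Plugging this into the definition of $d$ gives
\al{
d(F(g_1), F(g_2)) = \sum_{i=1}^\infty 2^{-i} \|F(g_1)-F(g_2)\|_i \leq \sum_{i=1}^\infty 2^{-i} \|g_1 - g_2\|_{i+q}.
}
Now re-index the sum by setting $j = i + q$, so that $2^{-i} = 2^q \cdot 2^{-j}$, and the right-hand side becomes $2^q \sum_{j=q+1}^\infty 2^{-j}\|g_1 - g_2\|_j \leq 2^q \sum_{j=1}^\infty 2^{-j}\|g_1 - g_2\|_j = 2^q\, d(g_1,g_2)$, using that all the norm terms are nonnegative. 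This establishes $d(F(g_1), F(g_2)) \leq 2^q\, d(g_1, g_2)$, which is the asserted $2^q$-Lipschitz continuity, and in particular $F$ is continuous as a map $(\Gamma, d) \to (\Gamma, d)$.

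I do not expect any genuine obstacle here — this is essentially a bookkeeping argument built on top of Lemma~\ref{lem:local}. The one point that requires a little care is the convergence/finiteness of the sums and the re-indexing: one should note that $\|g_1 - g_2\|_j$ grows at most linearly in $j$ by \eqref{411}, so all series converge absolutely and the manipulations (re-indexing, dropping the finitely many extra nonnegative terms $j = 1, \dots, q$) are legitimate. If instead "Lipschitz continuous" is meant with respect to $D$ on $\ag E_q^r$ composed with some parametrization, the same Lemma~\ref{lem:local} input drives the bound; but the natural reading is the $\Gamma \to \Gamma$ statement above, and the proof is the short computation just sketched.
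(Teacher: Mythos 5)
Your proof is correct and is essentially identical to the paper's: both reduce the claim to the bound $\|F(g_1)-F(g_2)\|_i \le \|g_1-g_2\|_{i+q}$ from Lemma~\ref{lem:local}, sum against the weights $2^{-i}$, and re-index to pull out the factor $2^q$. Your extra remarks on absolute convergence via \eqref{411} and on the interpretation of ``Lipschitz'' as a statement about the metric $d$ on $\Gamma$ are the intended reading and add nothing beyond what the paper's one-line computation already uses.
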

\begin{proof}
Suppose $g_1,g_2\in \Gamma$ satisfy that $d(g_1,g_2)\leq \delta$. By Lemme~\ref{lem:local}, 
\al{
d(F(g_1), F(g_2))&=\sum_{i=1}^\infty 2^{-i}\|F(g_1)-F(g_2)\|_i\\
&\leq \sum_{i=1}^\infty 2^{-i}\|g_1-g_2\|_{i+q}=2^q\sum_{i=1}^\infty 2^{-i-q}\|g_1-g_2\|_{i+q}\\
&\leq 2^q d(g_1,g_2).
}
\end{proof}
\begin{lem}\label{lem:polish}
The space $\ag E^r_q$ is compact. 
\end{lem}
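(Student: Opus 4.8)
The plan is to show that $(\ag E^r_q, D)$ is a compact metric space by establishing that it is complete and totally bounded, or equivalently sequentially compact via a diagonal/Arzelà–Ascoli argument. I would prefer the sequential-compactness route, since the functions $F$ take values in $\Gamma$, and $\Gamma$ itself (with the metric $d$) is a nice space on which we can run Arzelà–Ascoli. So the first step is to record the relevant compactness of $\Gamma$: by the $2$-spatially-Lipschitz condition, for each $k$ the set $\{g\in\Gamma: g(0)=0\}$ restricted to $|x|_1\le k$ is equi-Lipschitz and uniformly bounded, hence by Arzelà–Ascoli any sequence in $\Gamma$ with a fixed value at the origin has a subsequence converging uniformly on compacts, i.e. in $d$. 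More precisely, $\Gamma$ modulo additive constants is compact in $d$; since $F(g)-g$ is bounded in sup-norm near $0$ by $r$ (Property~\eqref{prop3}), the ambiguity of the additive constant is what we must track carefully.

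Second, given a sequence $(F_n)\subset \ag E^r_q$, I would fix a countable dense subset $\{g^{(1)}, g^{(2)},\dots\}$ of $\Gamma$ (density in $d$; such a set exists because $\Gamma$ is separable — e.g. take piecewise-linear $2$-Lipschitz functions with rational breakpoints and rational values, normalized appropriately). For each fixed $j$, the sequence $(F_n(g^{(j)}))_n$ lies in $\Gamma$ and, by Property~\eqref{prop3}, satisfies $\|F_n(g^{(j)}) - g^{(j)}\|_0\le r$, so it is contained in a $d$-compact subset of $\Gamma$; extract a convergent subsequence and diagonalize over $j$ to get a subsequence (still called $F_n$) such that $F_n(g^{(j)})\to F_\infty(g^{(j)})$ in $d$ for every $j$. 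Then I would extend $F_\infty$ to all of $\Gamma$: for arbitrary $g\in\Gamma$, pick $g^{(j_m)}\to g$ in $d$; the key point is that Lemma~\ref{lem:local} (the localization estimate $\|F(g_1)-F(g_2)\|_\infty\le\|g_1-g_2\|_\infty$, hence $d(F(g_1),F(g_2))\le 2^q d(g_1,g_2)$ as in Lemma~\ref{lem:lipg}) gives that $(F_n(g))_n$ is Cauchy-in-$d$ \emph{uniformly in $n$} relative to the $g^{(j_m)}$, so $F_n(g)$ converges; define $F_\infty(g)$ to be the limit. A standard $3\varepsilon$ argument shows the limit is independent of the approximating sequence and that $F_n\to F_\infty$ in $D$ (one checks $\|F_n - F_\infty\|_k\to 0$ for each $k$ using equi-Lipschitzness in $g$ plus density — this is the one place a little care is needed, because $D$ involves a sup over all of $\Gamma$, not just the dense set, but Lemma~\ref{lem:local} converts the sup into a statement on the dense set).

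Third, and finally, I must check that $F_\infty\in\ag E^r_q$, i.e. that all four defining properties pass to the limit. Properties~\eqref{prop1} (commuting with constants), \eqref{prop2} (monotonicity), \eqref{prop3} (the sup bound by $r$), and \eqref{prop4} (finite speed of propagation) are each preserved under $d$-pointwise limits: \eqref{prop1} and \eqref{prop2} because $d$-convergence implies pointwise convergence of the height values and both are closed conditions; \eqref{prop3} because $\|\cdot\|_0$ is $d$-lower-semicontinuous (indeed continuous on the relevant bounded sets) so $\|F_\infty(g)-g\|_0\le\liminf\|F_n(g)-g\|_0\le r$; and \eqref{prop4} again because it is a pointwise (closed) condition on finitely many — in fact countably many — coordinates. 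One also needs $F_\infty$ to actually map into $\Gamma$, which holds because $\Gamma$ is $d$-closed (uniform limits of $2$-Lipschitz functions are $2$-Lipschitz).

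The main obstacle I anticipate is \textbf{handling the additive-constant ambiguity and the sup-over-all-of-$\Gamma$ in the metric $D$ simultaneously}: one must be sure that the diagonal subsequence chosen using only the countable dense set $\{g^{(j)}\}$ actually controls $\|F_n - F_\infty\|_k = \sup_{g\in\Gamma}\|F_n(g)-F_\infty(g)\|_k$ for \emph{all} $g$, and uniformly enough that the weighted sum defining $D$ goes to zero. This is exactly what the localization Lemma~\ref{lem:local} is for — it is the engine that makes $\ag E^r_q$ "separably parametrized" — so the real content of the proof is assembling that estimate correctly; the rest is the routine Arzelà–Ascoli/diagonalization machinery. (If one instead wanted to prove total boundedness directly, the same localization estimate lets one build finite $D$-nets by discretizing the values of $F$ on a large but finite ball and invoking \eqref{prop4} to ignore the rest, but the sequential argument above seems cleaner.)
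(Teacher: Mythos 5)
Your proposal is correct, but it takes a different route from the paper. The paper (following Lemma 3.2 of \cite{FR01}) argues via total boundedness: using Property~\eqref{prop1} to reduce to inputs $g$ with $g(0)=0$, Lemma~\ref{lem:local} to localize, and Property~\eqref{prop3} to bound the output, one discretizes the input/output data on a large finite ball to produce a finite $\vep$-net in $\ag E^r_q$ (completeness being left implicit, deferred to \cite{FR01}). You instead prove sequential compactness directly: diagonalize over a countable dense subset of $\Gamma$, use the equi-Lipschitz bound of Lemma~\ref{lem:lipg} to extend the subsequential limit to all of $\Gamma$ and to upgrade pointwise convergence on the dense set to convergence in $D$ (the reduction of the sup in $\|F_n-F_\infty\|_k$ to the compact slice $\{g:g(0)=0\}$ via Property~\eqref{prop1}, combined with equicontinuity, is exactly the right way to handle this), and then check that all four defining properties are closed under $d$-pointwise limits. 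Both arguments rest on the same three structural facts (translation covariance, localization, and the uniform displacement bound); yours has the advantage of being self-contained and of producing the limit object explicitly, while the paper's net construction is what generalizes most directly to the quantitative separability statement needed for Prokhorov's theorem. One small point worth making explicit if you write this up: $d$-convergence in $\Gamma$ is uniform convergence on compacts, so the pointwise inequalities in Properties~\eqref{prop2} and \eqref{prop4} do pass to the limit as you claim, and $\Gamma$ itself is $d$-closed, so $F_\infty$ indeed maps into $\Gamma$.
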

\begin{proof}
The proof is  the same as  Lemma 3.2 in \cite{FR01}, so we omit the details. The main idea, to show totally-boundedness, is to choose a finite set of functions in $\ag E^r_q$, so that every $F\in \ag E^r_q$ can be approximated by at least one of them up to a required precision. Due to Property~\eqref{prop1}, a function $F\in \ag E_q^r$ is completely characterized by its image of $g\in \Gamma$ such that $g(0)=0$. Due to Lemma~\ref{lem:local} (which requires Properties~\eqref{prop1}, \eqref{prop2} and \eqref{prop4}) and the definition of the metric $D$, by specifying the input $g$ and output $F(g)$ on a finite region around the origin, we can approximate nearby functions in $\ag E_q^r$ with a controllable error. Furthermore, Property~\eqref{prop3} provides a finite bound to the possible range of $F(g)$. Combined with the fact that both $g$ and $F(g)$ are in $\Gamma$,  only a finite number of functions in  $\ag E^r_q$ are required. 
\end{proof}

Now let $\ag C_T:=C\lp[0,T]\times [0,T];\ag E^r_q\rp=C\lp[0,T];C\lp [0,T];\ag E^r_q\rp\rp$ be the space of continuous functions $S:[0,T]\times [0,T]\rar \ag E^r_q$, with the uniform topology, where the distance between two functions $R_1, R_2\in \ag C_T$ is given by
\ba{
\rho(R_1,R_2):=\sup_{s,t\in [0,T]^2} D\lp R_1(s,t), R_2(s,t)\rp.\label{ctmetric}
}

It is well known  that if $Y$ is a Polish space (separable complete metric space), then $C([0,T];Y)$ is also a Polish space (see for example \cite[Theorem 4.19]{AK12}). Then since $\ag E^r_q$ is compact, it is in particular Polish. Thus $\ag C_T$ is also Polish.

\subsection{Smoothing out the height process temporally}\label{sec:smoothtime}
Having the abstract space set up, we return to the function $S_n(s,t;g,\omega)(x)$ defined previously. When $s<t$, the function $S_n(s,t;\cdot,\omega)$ does not belong to $\ag E^r_q$ since the value of $S_n(s,t;g,\omega)(2x)$ at $x\in \fr1n \mbb Z^2$ belongs to $\fr1n \mbb Z$, while the constant $m$ in Property~\eqref{prop1} of $\ag E^r_q$ is arbitrary, which is used in the proof of Lemma~\ref{lem:local} and  Lemma~\ref{lem:polish}. To force Property~\eqref{prop1}, we could consider a modified function $g\mapsto S_n(s,t; g-g(0),\omega)+g(0)$, but then one can check that Property~\eqref{prop2} no longer holds. To guarantee both Property~\eqref{prop1} and \ref{prop2}, we consider the following modification:
\al{
\wh S_n(s,t;g,\omega):=\fr 14\int_{u=0}^{ 4}\lb S_n\lp s,t; g-g(0)-\fr u n,\omega\rp+\fr un\rb du +g(0).
}
Roughly speaking, we are averaging the result of $S_n$ across a vertical range of $\fr 4n$. It is obvious that $\wh S_n(s,t;\cdot, \omega)$ still maps $\Gamma$ to $\Gamma$.

First, we prove a couple of lemmas, which will be used to show that $\wh S_n$ is in $\ag E_q^r$, and also later in the paper.

\begin{lem}\label{lem:snvert}
For any $s,t\in \fr1n \mbb N$ such that $s<t$, and $g\in \Gamma$, we have 
\al{
\|S_n(s,t;g,\omega)-g\|_\infty\leq \fr{6+C_0}n+4(t-s).
}
\end{lem}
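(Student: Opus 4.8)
The plan is to bound $\|S_n(s,t;g,\omega)-g\|_\infty$ by decomposing the difference into two pieces: the error introduced by the interpolation $\psi$, and the genuine vertical displacement of the discrete height process over $n(t-s)$ shuffles. Concretely, by \eqref{sndif} we have, for $x\in\frac1n\mbb Z^2$,
\al{
\left|S_n(s,t;g,\omega)(x)-\fr1n h\lp nx, n(t-s); \vph^n_g, \gamma_{ns}\omega\rp\right|<\fr{C_0}n,
}
so it suffices to control $\left|\fr1n h\lp nx, n(t-s); \vph^n_g, \gamma_{ns}\omega\rp-g(x)\right|$ up to an additional $O(1/n)$ and $4(t-s)$. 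For the general $x\in\mbb R^2$ (not on the lattice), the $2$-spatially-Lipschitz property of both $S_n(s,t;g,\omega)$ (it lies in $\Gamma$) and $g$ lets me pass from a nearby lattice point at the cost of another $O(1/n)$; I will fold all such constant/$n$ errors into the $6+C_0$ in the statement.

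For the discrete piece, I would first use the vertical speed bound, Lemma~\ref{lem:vert}, applied to the process started from $\vph^n_g$ and run for $n(t-s)$ steps (after the harmless time shift by $\gamma_{ns}\omega$, which does not affect the bound since the Bernoulli marks are i.i.d.): this gives
\al{
\vph^n_g(nx)-4n(t-s)\;\leq\; h\lp nx, n(t-s);\vph^n_g,\gamma_{ns}\omega\rp\;\leq\; \vph^n_g(nx).
}
Dividing by $n$ and combining with the two-sided estimate \eqref{g2hbd}, namely $ng(x)-4<\vph^n_g(nx)\le ng(x)+1$ (using $\frac{nx}{n}=x$), yields
\al{
g(x)-\fr4n-4(t-s)\;\leq\;\fr1n h\lp nx, n(t-s);\vph^n_g,\gamma_{ns}\omega\rp\;\leq\; g(x)+\fr1n.
}
Hence the discrete contribution to $\|S_n(s,t;g,\omega)-g\|_\infty$ is at most $\frac4n+4(t-s)$ on the lattice.

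Finally I would assemble the pieces: adding the $\frac{C_0}{n}$ from \eqref{sndif} and the (at most $O(1/n)$, absorbed) cost of moving to a non-lattice $x$, the total is bounded by $\frac{6+C_0}{n}+4(t-s)$, as claimed. I do not expect a serious obstacle here — every ingredient (Lemma~\ref{lem:vert}, the estimate \eqref{g2hbd}, and \eqref{sndif}) is already available; the only mild care needed is bookkeeping the several $O(1/n)$ terms so that they sum to at most $\frac{6+C_0}{n}$, and checking that the time shift $\gamma_{ns}\omega$ is irrelevant for the vertical bound because Lemma~\ref{lem:vert} holds for every mark $\omega$. The asymmetry of the bound (the true growth is downward, by at most $4(t-s)$, while the upward deviation is only $O(1/n)$) is exactly what Lemma~\ref{lem:vert} predicts, so the symmetric-looking statement is simply the weaker two-sided consequence.
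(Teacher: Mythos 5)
Your proof is correct and follows essentially the same route as the paper's: reduce to lattice points via \eqref{sndif}, bound the discrete displacement by combining \eqref{g2hbd} with Lemma~\ref{lem:vert}, and pay an extra $2/n$ to pass to all of $\mbb R^2$ using the $2$-spatial-Lipschitz property of $S_n$ and $g$. The bookkeeping ($4/n + C_0/n + 2/n = (6+C_0)/n$) matches the paper's exactly.
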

\begin{proof}
From \eqref{g2hbd}, it follows that $|ng(x)-\vph_g^n(x)|\leq 4$ for all $x\in \mbb Z^2$. And by Lemma~\ref{lem:vert}, for all $x\in\fr1n \mbb Z^2$, 
\al{
\left|\vph_g^n(x)-h(nx, n(t-s);\vph_g^n,\gamma_{ns}\omega)\right|\leq 4n(t-s).
}
Combined with \eqref{sndif}, we get for all $x\in \fr1n\mbb Z^2$,
\al{
\left|S_n(s,t;g,\omega)(x)-g(x)\right|\leq \fr{4+C_0}n+4(t-s).
}
Finally, since $S_n$ and $g$ are both in $\Gamma$, we deduce that for all $x\in \mbb R^2$,
\al{
\left|S_n(s,t;g,\omega)(x)-g(x)\right|\leq \fr{4+C_0+2}n+4(t-s).
}
\end{proof}

\begin{lem}\label{lem:snlinear}
Given $x\in\mbb R^2$ and $g_1,g_2\in \Gamma$, if $g_1(y)=g_2(y)$ for all $y$ such that $|y-x|_1\leq R$, then $S_n(s,t;g_1,\omega)(y)=S_n(s,t;g_2,\omega)(y)$ for all $y$ such that $|y-x|_1\leq R-2(t-s)-\fr9n$.
\end{lem}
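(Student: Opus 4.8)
The plan is to transfer the linear-propagation property of the discrete height process (Lemma~\ref{lem:linear}) through the three layers of interpolation (spatial lattice interpolation $\psi$, the rescaling defining $S_n$, and the local definition of $\vph^n_g$) with explicit bookkeeping of the constants lost at each stage. First I would reduce to the statement about the discrete height process. Recall $S_n(s,t;g,\omega)(x) = \fr1n \psi_{ns,nt}(nx)$ where $\psi_{ns,nt}$ agrees with $h(\cdot, n(t-s);\vph^n_g,\gamma_{ns}\omega)$ on $2\mbb Z^2$ and is interpolated $2$-spatially-Lipschitz in between. The key point is that $S_n(s,t;g_1,\omega)$ and $S_n(s,t;g_2,\omega)$ agree at a point $y$ as soon as the underlying discrete heights agree at all faces of the interpolation square containing $ny$, so it suffices to show agreement of $h(\cdot,n(t-s);\vph^n_{g_1},\gamma_{ns}\omega)$ and $h(\cdot,n(t-s);\vph^n_{g_2},\gamma_{ns}\omega)$ on a slightly shrunk $\ell_1$-ball.

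The second step is the heart of the matter: I would use Proposition~\ref{prop:locdef} to control the region where $\vph^n_{g_1}$ and $\vph^n_{g_2}$ must agree. If $g_1(y)=g_2(y)$ for all $|y-x|_1 \le R$, then rescaling, $ng_1$ and $ng_2$ agree on $\{z : |z - nx|_1 \le nR\}$. By Proposition~\ref{prop:locdef}, for each face $z$ the value $\vph^n_{g_i}(z)$ is realized by some $\vph = \tau_{z'} v + k$ with $|z'-z|_1 \le 1$ and $k \le n g_i(z'/n)$; so if $z$ is such that its entire $\ell_1$-distance-$1$ neighborhood lies inside the agreement ball of $ng_1$ and $ng_2$ — i.e. $|z-nx|_1 \le nR - 1$ — then the two maximization problems defining $\vph^n_{g_1}(z)$ and $\vph^n_{g_2}(z)$ coincide, giving $\vph^n_{g_1}(z) = \vph^n_{g_2}(z)$. (One should double-check the logic here: the candidate $\vph$ for $g_1$ at $z$ is admissible for $g_2$ and vice versa because the constraint $k \le ng_i(z'/n)$ is the same when $z'$ is in the common region, while the value $\tau_{z'}v(z)+k$ does not depend on $i$; the maximum over a common feasible set with a common objective is equal.) Then Lemma~\ref{lem:linear} applied with radius $nR-1$ gives $h(w, n(t-s); \vph^n_{g_1},\gamma_{ns}\omega) = h(w, n(t-s);\vph^n_{g_2},\gamma_{ns}\omega)$ for all $w$ with $|w - nx|_1 \le nR - 1 - 2n(t-s)$.

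The final step is reassembling: tracking the loss from interpolation, agreement of the discrete heights on the $\ell_1$-ball of radius $nR - 1 - 2n(t-s)$ around $nx$ yields agreement of $\psi_{ns,nt}$, hence of $S_n(s,t;\cdot,\omega)$, on a ball shrunk by a further $O(1)$ (the size of an interpolation square, which costs a bounded $\ell_1$-amount, absorbed together with the $-1$ into a constant like $9$ after dividing by $n$), i.e. $S_n(s,t;g_1,\omega)(y) = S_n(s,t;g_2,\omega)(y)$ for $|y-x|_1 \le R - 2(t-s) - \fr9n$. The main obstacle is purely the careful accounting: making sure the constant $9/n$ genuinely dominates the combined losses (the $1/n$ from the Proposition~\ref{prop:locdef} neighborhood, a few more $1/n$'s from passing between the sublattice $2\mbb Z^2$ where $\psi$ is pinned and a general real point, and the width of one interpolation cell), and handling the trivial case $s \ge t$ where $S_n \equiv g$ so the statement holds vacuously with any radius. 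No deep idea is needed beyond Lemma~\ref{lem:linear} and Proposition~\ref{prop:locdef}; it is a propagation-of-locality argument.
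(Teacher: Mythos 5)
Your proposal is correct and follows essentially the same route as the paper: Proposition~\ref{prop:locdef} to get agreement of $\vph^n_{g_1}$ and $\vph^n_{g_2}$ on a ball shrunk by $O(1)$ lattice steps, Lemma~\ref{lem:linear} to propagate that agreement through the discrete dynamics, and then an accounting of the $O(1)$ losses from the interpolation cells and from rounding $nx$ to a lattice point (the paper centers Lemma~\ref{lem:linear} at $\lfloor nx\rfloor$, costing $2$ on each side, which together with the $1$ from Proposition~\ref{prop:locdef} and the $4$ from the $2\times2$ interpolation square yields exactly the $9/n$). The bookkeeping you defer is precisely what the paper's proof carries out, and your identification of the sources of loss is complete.
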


\begin{proof}
Assuming that $g_1(y)=g_2(y)$ for all $y\in\mbb R^2$ such that $|y-x|_1\leq R$,  by Proposition~\ref{prop:locdef}, we have $\vph^n_{g_1}(z)=\vph^n_{g_2}(z)$ for all $z\in \mbb Z^2$ with $|\fr zn-x|_1\leq R-\fr1n$. This is equivalent to
\al{
|z-nx|_1=|(z-\lfloor nx\rfloor)+(\lfloor nx\rfloor- nx)|_1\leq nR-1.
}
Since $|\lfloor nx\rfloor- nx|_1\leq 2$, we know that $\vph^n_{g_1}(z)=\vph^n_{g_2}(z)$ for all $z\in \mbb Z^2$ with  $|z-\lfloor nx\rfloor|_1\leq nR-3$. 

 Applying Lemma~\ref{lem:linear}, we have 
\al{h(ny,nt-ns;\vph^n_{g_1},\gamma_{ns}\omega)= h(ny,nt-ns;\vph^n_{g_2},\gamma_{ns}\omega)}
for all $y\in \fr1n\mbb Z^2$ such that $|ny-\lfloor nx\rfloor|_1\leq nR-3-2n(t-s)$. By construction, the value of $S_n$ at a point $y\in\mbb R^2$ is determined by the value of $h$ at $(2i,2j), (2i+2, 2j),(2i+2,2j+2), (2i, 2j+2)$ such that $i,j\in \mbb Z$, and the square formed by these four points contains $ny$. Therefore, $S_n(s,t;g_1,\omega)(y)= S_n(s,t;g_2,\omega)(y)$ for all $y\in \mbb R^2$ such that 
\al{|ny-\lfloor nx\rfloor|_1&\leq nR-3-2n(t-s)-4\\
\left|n(y-x)+(nx-\lfloor nx\rfloor)\right|_1&\leq nR-7-2n(t-s).}
Since $\left|nx-\lfloor nx\rfloor\right|_1\leq 2$, we conclude that $S_n(s,t;g_1,\omega)(y)=S_n(s,t;g_2,\omega)(y)$ for all $y$ such that 
\al{
\left|n(y-x)\right|_1&\leq nR-7-2n(t-s)-2\\
\left|y-x\right|_1&\leq R-2(t-s)-\fr9n.
}
\end{proof}

\begin{prop}\label{prop:whsninerq}
Given $T>0$, there exist universal constants $r, q>0$ such that $\wh S_n(s,t;\cdot,\omega)\in \ag E^r_q$, where $r, q$ will be specified below, for all $n\in \mbb Z_{>0}$ and $s,t\in [0,T]\cap \fr1n\mbb N$.
\end{prop}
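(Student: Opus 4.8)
The plan is to verify the four defining properties of $\ag E^r_q$ for the function $\wh S_n(s,t;\cdot,\omega)$, with the two cases $s\geq t$ and $s<t$ handled separately (the former being trivial since $\wh S_n(s,t;\cdot,\omega)$ then acts as the identity, which clearly belongs to $\ag E^0_0\subset \ag E^r_q$). So assume $s<t$. Property~\eqref{prop1} is immediate from the construction: replacing $g$ by $g+m$ shifts $g(0)$ to $g(0)+m$ and replaces $g-g(0)$ by the same function, so $\wh S_n(s,t;g+m,\omega)=\wh S_n(s,t;g,\omega)+m$. Property~\eqref{prop3} follows from Lemma~\ref{lem:snvert}: for each $u\in[0,4]$ we have $\|S_n(s,t;g-g(0)-\tfrac un,\omega)+\tfrac un -(g-g(0))\|_\infty\leq \tfrac{6+C_0}n+4(t-s)+\tfrac 4n$ (the extra $\tfrac 4n$ absorbing the shift by $\tfrac un$), and averaging over $u$ and evaluating at $x=0$ preserves this bound; so one may take $r:=\tfrac{10+C_0}n+4T$, and since $n\geq 1$ and $T$ is fixed, $r$ can be chosen as a universal constant like $10+C_0+4T$.

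For Property~\eqref{prop4}, suppose $g_1(x)=g_2(x)$ for all $x$ with $|x|_1\leq R$ where $R\geq q$. First I would reduce to the centered case: since $g_1$ and $g_2$ agree at $0$, also $g_1(0)=g_2(0)$, so $g_1-g_1(0)-\tfrac un$ and $g_2-g_2(0)-\tfrac un$ agree on $\{|x|_1\leq R\}$ for every $u$. Then Lemma~\ref{lem:snlinear} (with $x=0$) gives $S_n(s,t;g_1-g_1(0)-\tfrac un,\omega)(y)=S_n(s,t;g_2-g_2(0)-\tfrac un,\omega)(y)$ for all $y$ with $|y|_1\leq R-2(t-s)-\tfrac 9n$, hence in particular for $|y|_1\leq R-2T-9$. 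Adding $\tfrac un$ back, integrating in $u$, and adding $g_1(0)=g_2(0)$ shows $\wh S_n(s,t;g_1,\omega)(y)=\wh S_n(s,t;g_2,\omega)(y)$ on $\{|y|_1\leq R-2T-9\}$. So $q:=2T+9$ works, again a universal constant for fixed $T$.

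Property~\eqref{prop2}, monotonicity, is where the real work lies, and it is precisely the reason the averaging in the definition of $\wh S_n$ was introduced. If $g_1\leq g_2$ pointwise, I want $\wh S_n(s,t;g_1,\omega)\leq \wh S_n(s,t;g_2,\omega)$. The subtlety is that the raw map $S_n(s,t;\cdot,\omega)$ is built from $\vph^n_g$, and $g_1\leq g_2$ only gives $\vph^n_{g_1}\leq \vph^n_{g_2}+O(1)$ rather than a clean inequality, because of the rounding inherent in passing from $g$ to the admissible height function $\vph^n_g$ (recall $ng(x)-4<\vph^n_g(x)\leq ng(x)+1$ from \eqref{g2hbd}). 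The plan is: from $g_1\leq g_2$ and the explicit description of $\vph^n_g$ as a supremum over $\Phi_g$, deduce $\vph^n_{g_1}\leq \vph^n_{g_2}$ directly (indeed $\Phi_{g_1}\subseteq\Phi_{g_2}$ since the condition $k\leq ng(y/n)$ is monotone in $g$, so the pointwise sup only grows), which together with Lemma~\ref{lem:monotone2} gives $h(\cdot,\cdot;\vph^n_{g_1},\cdot)\leq h(\cdot,\cdot;\vph^n_{g_2},\cdot)$ and hence $S_n(s,t;g_1,\omega)\leq S_n(s,t;g_2,\omega)$ \emph{before} recentering. The problem is the recentering by $-g_i(0)$: subtracting the two different constants $g_1(0)\leq g_2(0)$ can reverse the inequality on the shifted functions. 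This is exactly what the $u$-average over a window of width $\tfrac 4n$ fixes. The key point is that $S_n(s,t;g-g(0)-\tfrac{u}{n},\omega)+\tfrac un$, as a function of $u$, is a step function with jumps of size $\tfrac 4n$ (since shifting the input down continuously changes $\vph^n_g$ only when a threshold in \eqref{g2hbd} is crossed, at which point the discrete height, and thus $S_n$, jumps by exactly $4/n$ uniformly), so that $\fr 14\int_0^4 [S_n(s,t;g-g(0)-\tfrac un,\omega)+\tfrac un]\,du$ is, up to the global shift $g(0)$, a genuinely continuous and monotone-compatible interpolation: comparing two inputs whose centered versions differ, the averaging smooths out the $O(1/n)$ mismatch so that $g_1\leq g_2$ survives. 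Concretely I would show that for fixed $g$ the integrand's $u$-average equals $S_n(s,t;g-g(0),\omega)$ "linearly interpolated in the vertical direction," and that this vertical interpolation is monotone in the pair $(\text{function},\text{base height})$, which yields Property~\eqref{prop2}. Verifying this monotonicity of the averaged object — tracking exactly how $\vph^n_g$ depends on continuous downward shifts of $g$ and confirming the jump size is uniformly $4/n$ — is the main obstacle; everything else is bookkeeping with the already-established lemmas.
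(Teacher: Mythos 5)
Your verification of Properties~\eqref{prop1}, \eqref{prop3}, and \eqref{prop4} is correct and matches the paper: the same constants $r=10+C_0+4T$ and $q=2T+9$ appear there, obtained from Lemma~\ref{lem:snvert} and Lemma~\ref{lem:snlinear} in exactly the way you describe.

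The gap is in Property~\eqref{prop2}, which you yourself flag as ``the main obstacle'' and leave unproved. Your reduction is right: $\Phi_{g_1}\subseteq\Phi_{g_2}$ gives $\vph^n_{g_1}\leq\vph^n_{g_2}$, hence monotonicity of $S_n$ \emph{before} recentering, and the recentering by the two different constants $g_i(0)$ is what threatens to break it. But the mechanism you propose for why the $u$-average repairs this is not correct as stated: $u\mapsto S_n(s,t;g-g(0)-\fr un,\omega)$ is not a step function whose jumps occur ``uniformly'' --- different faces cross their mod-$4$ thresholds in \eqref{g2hbd} at different values of $u$, and since the evolution is nonlinear the $u$-average is not ``$S_n(s,t;g-g(0),\omega)$ linearly interpolated in the vertical direction,'' nor is it clear what ``monotone in the pair (function, base height)'' means for such an object. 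What the paper actually does is construct, for the given pair $g_1\leq g_2$, a measure-preserving bijection $\eta:[0,4)\to[0,4)$ chosen so that $\delta-\delta_0+\fr{\eta(u)-u}{n}$ is an exact multiple $\fr{4k}{n}$ of the quantization step (with $\delta=\inf(g_2-g_1)$ and $\delta_0=g_2(0)-g_1(0)$); then $f_2-\fr un-\fr{4k}n\geq f_1-\fr{\eta(u)}n$ holds exactly, Lemma~\ref{lem:monotone2} applies, the shift $\fr{4k}n$ is undone by the exact quasi-periodicity $S_n(s,t;f-\fr{4k}n,\omega)=S_n(s,t;f,\omega)-\fr{4k}n$, and integrating over $u$ with the change of variables $u\mapsto\eta(u)$ yields a pointwise difference bounded below by $\delta\geq0$. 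An equivalent clean route, closer to your intuition, is to observe that by the period-$4$ property of $u\mapsto S_n(s,t;g-\fr un,\omega)+\fr un$ (which the paper uses later for the $E_3^\vep$ argument) the average is independent of the centering constant altogether, so one may drop the $-g(0)$ entirely and apply your $\Phi_{g_1}\subseteq\Phi_{g_2}$ observation directly; but either way this step must actually be carried out, and your sketch does not do so.
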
 

\begin{proof}
We assume that $s<t$, for otherwise $S_n(s,t;\cdot,\omega)$ and thus $\wh S_n(s,t;\cdot,\omega)$ is simply identity.

Property~\eqref{prop1} holds  because for any constant $m$,
\al{
\wh S_n(s,t; g+m,\omega)&=\fr 14\int_{u=0}^{ 4}\lb S_n\lp s,t; g+m-g(0)-m-\fr un,\omega\rp+\fr un\rb du +g(0)+m\\
&=\wh S_n(s,t; g, \omega)+m.
}

From Lemma~\ref{lem:snvert}, we know that $|S_n(s,t;g,\omega)(0)-g(0)-u|\leq (6+C_0+4)/n+4(t-s)\leq 10+C_0+4T$ for all $u\in [0,\fr n4)$, so by taking the integral over $u$, Property~\eqref{prop3} holds for $\wh S_n(s,t;\cdot,\omega)$ with $r=10+C_0+4T$. 

We confirm that Property~\eqref{prop4} holds with $q=2T+9$ by specializing Lemma~\ref{lem:snlinear} to $x=0$.

Finally, to verify Property~\eqref{prop2}, suppose $g_1, g_2\in \Gamma$ satisfy that $g_1\leq g_2$. We will construct a measure-preserving bijection $\eta:[0,4)\rar [0,4)$. We set up some notations for convenience:
\al{
\delta_0:=g_2(0)-g_1(0),\;\;\;\;&\delta:= \inf_{x\in \mbb R^2}\lc g_2\lp x\rp- g_1\lp x\rp \rc \geq 0\in [0, +\infty),\\
f_1:=g_1-g_1(0),\;\;\;\;
&f_2:=g_2-g_2(0).
}

Let us consider
\al{
&\lb S_n\lp s,t; g_2-g_2(0)-\fr u n,\omega\rp+\fr un + g_2(0)\rb - \lb S_n\lp s,t; g_1-g_1(0)-\fr {\eta(u)} n,\omega\rp+\fr {\eta(u)}n + g_1(0)\rb\\
=& S_n\lp s,t;f_2-\fr u n,\omega\rp - S_n\lp s,t;f_1-\fr {\eta(u)} n,\omega\rp+\fr {u- \eta(u)}n + \delta_0.\numberthis\label{4.13.1} 
}
We define $\eta(u)$ to be the unique element in $[0,4)$ such that $\delta-\delta_0+\fr {\eta(u)-u}n$ is an integer multiple of $\fr4n$, say $\fr{4k}n$ for some $k\in \mbb Z$.

By construction of $S_n$,
\ba{
S_n\lp s,t;f_2-\fr u n,\omega\rp&=S_n\lp s,t;f_2-\fr u n-\fr{4k}n,\omega\rp + \fr{4k}n,\label{4.13.2}
}
and
\al{
\lp f_2-\fr u n-\fr{4k}n\rp -\lp f_1-\fr {\eta(u)} n\rp &= g_2-g_1-(g_2(0)-g_1(0))+\fr {\eta(u)-u}n-\fr{4k}n\\&\geq \delta-\delta_0+\fr {\eta(u)-u}n-\fr{4k}n =0.
}

Therefore again by construction of $S_n$ and Lemma~\ref{lem:monotone2},
\ba{
S_n\lp s,t;f_2-\fr u n-\fr{4k}n,\omega\rp - S_n\lp s,t;f_1-\fr {\eta(u)} n,\omega\rp\geq 0.\label{4.13.3}
}

Putting \eqref{4.13.1}, \eqref{4.13.2},\eqref{4.13.3} together, we get that
\al{
\text{LHS of \eqref{4.13.1}}\geq \fr{4k}n+\fr {u- \eta(u)}n + \delta_0=\delta\geq 0.
}
Now integrating LHS of \eqref{4.13.1} over $u\in [0,4)$, we conclude that $\wh S_n(s,t;g_1,\omega)\leq \wh S_n(s,t;g_2,\omega)$.
\end{proof}

So far $\wh S_n(s,t;\cdot,\omega)\in \ag E^r_q$ is only defined for $s,t\in \fr1n\mbb N$. We would like to interpolate it to a Lipschitz continuous function in $(s,t)\in [0,T]^2$ with a universal Lipschitz constant (taking $\ell_1$ metric on $[0,T]^2$), and treat it as an element of $\ag C_T$.  We shall first check the Lipschitz continuity of $\wh S_n(s,t;\cdot,\omega)$ on $(s,t) \in \fr1n\mbb N^2$, and then interpolate bilinearly to the entire $[0,T]^2$. It suffices to consider $S_n$ since $\wh S_n$ is just an average of $S_n$.

\begin{prop}\label{prop:discrlip}
For all $n\in\mbb Z_{>0}$ , $(s,t)\in \fr1n\mbb N^2$, $g\in \Gamma$, 
\al{
\left\| S_n(s,t;g,\omega)- S_n\lp s,t+\fr1n;g,\omega\rp\right\|_\infty&\leq \fr {10+2C_0}n,\\
\left\| S_n(s,t;g,\omega)- S_n\lp s+\fr1n,t;g,\omega\rp\right\|_\infty&\leq \fr {10+2C_0}n.
} 
\end{prop}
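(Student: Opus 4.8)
\emph{Proof idea.} The plan is to control each one--step increment in the time variables by first passing from $S_n$ to the height process itself via the estimate \eqref{sndif}, and then comparing two nearby height processes. We begin by reducing to the nondegenerate range. If both instances of $S_n$ equal $g$ (this happens exactly when $s>t$ in the first estimate, and when $s\ge t$ in the second), the difference is $0$. If exactly one of them equals $g$ -- which occurs only when the two time arguments straddle the diagonal, namely $s=t$ in the first estimate or $s=t-\tfrac1n$ in the second -- then the difference equals $\|S_n(s',t;g,\omega)-g\|_\infty$ with $t-s'=\tfrac1n$, and Lemma~\ref{lem:snvert} bounds this by $\tfrac{6+C_0}n+\tfrac4n=\tfrac{10+C_0}n\le\tfrac{10+2C_0}n$. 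Hence from now on we may assume $s<t$ in the first estimate and $s+\tfrac1n<t$ in the second, so that every relevant instance of $S_n$ is genuinely given by the height process.

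\emph{The $t$--increment.} For $x\in\tfrac1n\mbb Z^2$, two applications of \eqref{sndif} reduce the bound on $|S_n(s,t;g,\omega)(x)-S_n(s,t+\tfrac1n;g,\omega)(x)|$ to an error $\tfrac{2C_0}n$ plus $\tfrac1n$ times the amount by which one extra shuffle can move a height at a fixed face. By the semigroup identity \eqref{semi}, that last shuffle is $h(\cdot,1;\cdot,\cdot)$ applied to the admissible configuration $h(\cdot,n(t-s);\vph^n_g,\gamma_{ns}\omega)\in\Phi$, and Lemma~\ref{lem:vert} says it lowers each height by at most $4$ and raises none. Hence the difference is at most $\tfrac{4+2C_0}n$ on $\tfrac1n\mbb Z^2$. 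Since both $S_n(s,t;g,\omega)$ and $S_n(s,t+\tfrac1n;g,\omega)$ lie in $\Gamma$, and every $x\in\mbb R^2$ is within $\ell_\infty$--distance $\tfrac1{2n}$ of a point of $\tfrac1n\mbb Z^2$, the $2$--spatial--Lipschitz property upgrades this to $\tfrac{4+2C_0}n+\tfrac2n=\tfrac{6+2C_0}n\le\tfrac{10+2C_0}n$ on all of $\mbb R^2$, which is the first inequality.

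\emph{The $s$--increment.} Here $S_n(s,t;g,\omega)$ and $S_n(s+\tfrac1n,t;g,\omega)$ are driven by the \emph{different} mark fields $\gamma_{ns}\omega$ and $\gamma_{ns+1}\omega$ and run for $n(t-s)$ and $n(t-s)-1$ shuffles respectively, so \eqref{sndif} alone does not align them. The key is to use \eqref{semi} to peel off the \emph{first} shuffle: $h(\cdot,n(t-s);\vph^n_g,\gamma_{ns}\omega)=h(\cdot,n(t-s)-1;h(\cdot,1;\vph^n_g,\gamma_{ns}\omega),\gamma_{ns+1}\omega)$. Now both processes use the same mark field $\gamma_{ns+1}\omega$ for the same number of steps $n(t-s)-1$, and differ only in their initial data $h(\cdot,1;\vph^n_g,\gamma_{ns}\omega)$ versus $\vph^n_g$, both in $\Phi$. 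By Lemma~\ref{lem:vert} these two admissible configurations differ pointwise by at most $4$, so the localization Proposition~\ref{prop:local}(2) (with $k=1$) forces $|h(\cdot,n(t-s);\vph^n_g,\gamma_{ns}\omega)-h(\cdot,n(t-s)-1;\vph^n_g,\gamma_{ns+1}\omega)|\le 4$ everywhere. Inserting this into \eqref{sndif} twice, exactly as in the $t$--increment, and upgrading from $\tfrac1n\mbb Z^2$ to $\mbb R^2$ via membership in $\Gamma$, yields the bound $\tfrac{6+2C_0}n\le\tfrac{10+2C_0}n$ again.

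\emph{Main obstacle.} Everything rests on the elementary fact that a single shuffle displaces every height by at most $4$ (Lemma~\ref{lem:vert}) together with the routine passage from the lattice $\tfrac1n\mbb Z^2$ to $\mbb R^2$ afforded by the uniform Lipschitz control inside $\Gamma$. The one genuinely delicate point is the $s$--increment, where the two processes are driven by different noise: realigning them through the semigroup property \eqref{semi} and then quantifying the resulting mismatch of initial data with the localization property (Proposition~\ref{prop:local}) is what makes the estimate go through. One must also not overlook the degenerate endpoints where $S_n$ collapses to the identity, which are disposed of separately above via Lemma~\ref{lem:snvert}.
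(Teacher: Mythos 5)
Your proof is correct and follows essentially the same route as the paper's: dispose of the degenerate diagonal cases via Lemma~\ref{lem:snvert}, bound the $t$-increment by peeling off the last shuffle with \eqref{semi} and Lemma~\ref{lem:vert}, and handle the $s$-increment by peeling off the first shuffle with \eqref{semi} and then invoking Proposition~\ref{prop:local} to propagate the $\leq 4$ discrepancy in initial data, with \eqref{sndif} and the $\Gamma$-Lipschitz property supplying the remaining error terms. The constants you obtain match the paper's as well.
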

\begin{proof}
 To check Lipschitz continuity with respect to $t$, take $F_1=S_n(s,t;\cdot,\omega)$ amd $F_2=S_n\lp s,t+\fr1n;\cdot,\omega\rp$ where $(s,t)$ and $\lp s,t+\fr1n\rp$ are both in $\fr1n\mbb Z_{\geq 0}^2$. When $s=t$, $F_1(g)=g$. Then Lemma~\ref{lem:snvert} implies that
\al{
\| F_1(g)- F_2(g)\|_\infty&=\|g- F_2(g)\|_\infty\leq \fr{6+C_0}n+\fr4n=\fr{10+C_0}n,\;\;\;\forall g\in \Gamma.}
When $s<t$, given an input $g$, $F_1$ and $F_2$ are computed from the same $\vph_g^n$. The outputs were interpolated from $h( n x, n(t-s); \vph^n_g, \gamma_{ns}\omega)$ and $h( n x, n(t-s)+1; \vph^n_g, \gamma_{ns}\omega)$ respectively. Applying \eqref{semi} and Lemma~\ref{lem:vert}, these two height functions differ by at most $4$ everywhere. Thus by \eqref{sndif}, $\| F_1(g)- F_2(g)\|\leq (4+2C_0+2)\fr1n$. 
When $s>t$ both functions are identity. Combining the bounds, we obtain the first inequality.

Checking Lipschitz continuity with respect to $s$ is somewhat different. This time, take $F_1=S_n(s,t;\cdot,\omega)$ amd $F_2=S_n\lp s+\fr1n,t;\cdot,\omega\rp$ where $(s,t)$ and $\lp s+\fr1n,t\rp$  both lie in $\fr1n\mbb N^2$. If $s+\fr1n = t$, $F_2(g)=g$, so  again Lemma~\ref{lem:snvert} implies that $\| F_1(g)- F_2(g)\|_\infty\leq (10+C_0)\fr1n$. When $s+\fr1n<t$, $F_1(g)$ and $F_2(g)$ are interpolated from $h( n x, n(t-s); \vph^n_g, \gamma_{ns}\omega)$ and $h( n x, n(t-s) -1; \vph^n_g, \gamma_{ns+1}\omega)$ respectively. By \eqref{semi},
\al{
h( n x, n(t-s); \vph^n_g, \gamma_{ns}\omega)&=h( n x, n(t-s)-1;h(\cdot, 1;\vph^n_g, \gamma_{ns}\omega), \gamma_{ns+1}\omega),
} 
and furthermore by Lemma~\ref{lem:vert},
\al{
|h(nx, 1;\vph^n_g, \gamma_{ns}\omega)-\vph^n_g(nx)|\leq 4,\;\;\;\;\; \forall x\in \fr1n\mbb Z^2.
} 
Combined with Proposition~\ref{prop:local}, we obtain that
\al{
|h( n x, n(t-s); \vph^n_g, \gamma_{ns}\omega)-h( n x, n(t-s) -1; \vph^n_g, \gamma_{ns+1}\omega)|\leq 4 ,\;\;\;\;\; \forall x\in \fr1n\mbb Z^2.
}
Finally using \eqref{sndif}, we conclude that $\| F_1(g)- F_2(g)\|_\infty\leq (4+2C_0+2)\fr1n$. When $s\geq t$, both functions are identity. As a result, we get the second inequality.
\end{proof}

The last step is to interpolate $\wh S_n$ to continuous time bilinearly. To be more specific, for each $(s,t)\in \fr1n \mbb N^2$ such that $0\leq s,t\leq T-\fr1n$, and each $g\in \Gamma$, let
$$\begin{array}{rll}
& f_{00}=\wh S_n(s,t;g,\omega), &f_{10}=\wh S_n\lp s+\fr1n,t;g,\omega\rp,\\
&f_{01}=\wh S_n\lp s,t+\fr1n;g,\omega\rp, &f_{11} =\wh S_n\lp s+\fr1n, t+\fr1n;g,\omega\rp.
\end{array}$$
Then for each $(x,y)\in [0,1]^2$, define
\al{
\wh S_n\lp s+\fr xn,t+\fr yn;g,\omega\rp&:=f_{00}+(f_{10}-f_{00})x+(f_{01}-f_{00})y+(f_{00}+f_{11}-f_{01}-f_{10})xy.
}
It is easy to see that $\wh S_n(s,t;g,\omega)$ assumes the original values at $(s,t)\in \fr1n\mbb N^2$, and is linear in $(s,t)$ whenever $s$ or $t$ is constant, with slopes bounded by the constant $10+2C_0$ from Proposition~\ref{prop:discrlip}. In particular, $\forall (s_1,t_1), (s_2,t_2)\in \mbb R^2$,  
\ba{\label{ptlip}
\| \wh S_n(s_1,t_1;g,\omega)- \wh S_n\lp s_2, t_2;g,\omega\rp\|_\infty&\leq (10+2C_0)(|s_1-s_2|+|t_1- t_2|).
}
Moreover, this linear property shows  that  $\wh S_n(s,t;\cdot, \omega)$ is in $\ag E^r_q$ for all $(s,t)\in [0,T]^2$. To summarize, we have the following statement.
\begin{prop}\label{prop:lip}
For all $n\in \mbb Z_{>0}$, $\wh S_n(s,t;\cdot, \omega)$ as a function of $(s,t)\in [0,T]^2$ is an element of $\ag C_T$ with Lipschitz constant $10+2C_0$, where $[0,T]^2$ is equipped with $\ell_1$ metric, i.e. \al{
D\lp \wh S_n(s_1,t_1;\cdot,\omega), \wh S_n\lp s_2,t_2;\cdot,\omega\rp\rp&\leq (10+2C_0)(|s_1-s_2|+|t_1- t_2|).
} 
\end{prop}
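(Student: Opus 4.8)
The substantive estimates are already available, so the plan is simply to assemble them. Proposition~\ref{prop:whsninerq} guarantees $\wh S_n(s,t;\cdot,\omega)\in\ag E^r_q$ at each grid point $(s,t)\in\fr1n\mbb N^2$, Proposition~\ref{prop:discrlip} supplies the one-step Lipschitz bounds on the grid, and the bilinear interpolation formula displayed above, together with \eqref{ptlip}, already records that after interpolation $(s,t)\mapsto\wh S_n(s,t;g,\omega)$ is Lipschitz with constant $10+2C_0$ for the $\ell_1$ metric on $[0,T]^2$, uniformly in $g\in\Gamma$, and still takes values in $\Gamma$. So only two gaps remain: that $\wh S_n(s,t;\cdot,\omega)$ lies in $\ag E^r_q$ for \emph{all} $(s,t)\in[0,T]^2$ rather than only on the grid, and that the uniform $\|\cdot\|_\infty$ Lipschitz bound \eqref{ptlip} upgrades to a Lipschitz bound in the metric $D$ of $\ag E^r_q$.

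For the first gap, I would note that on a cell $\lb s,s+\fr1n\rb\times\lb t,t+\fr1n\rb$ the interpolation formula writes $\wh S_n\lp s+\fr xn,t+\fr yn;\cdot,\omega\rp$ as the convex combination $(1-x)(1-y)f_{00}+x(1-y)f_{10}+(1-x)yf_{01}+xyf_{11}$ with $x,y\in[0,1]$, whose coefficients are nonnegative and sum to $1$, and whose corner values $f_{ij}$ lie in $\ag E^r_q$ by Proposition~\ref{prop:whsninerq}. It therefore suffices to check that $\ag E^r_q$ is convex, which is immediate: for a pointwise convex combination $F=\sum_j\lambda_jF_j$ with $\lambda_j\geq0$ and $\sum_j\lambda_j=1$, property \eqref{prop1} is an affine identity, \eqref{prop2} follows because $F_j(g_1)\leq F_j(g_2)$ for every $j$ forces $F(g_1)\leq F(g_2)$, \eqref{prop3} follows from $\|F(g)-g\|_0\leq\sum_j\lambda_j\|F_j(g)-g\|_0\leq r$, \eqref{prop4} follows because $F_j(g_1)=F_j(g_2)$ on a region for every $j$ gives the same for $F$, and $F$ is $\Gamma$-valued since a convex combination of $2$-spatially-Lipschitz functions is $2$-spatially-Lipschitz. (On the boundary strip not covered by full cells, present when $T\notin\fr1n\mbb N$, one just extends $\wh S_n$ constantly in $(s,t)$, which preserves all of this.)

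For the second gap, I would fix $(s_1,t_1),(s_2,t_2)\in[0,T]^2$, set $L:=(10+2C_0)(|s_1-s_2|+|t_1-t_2|)$, and take the supremum over $g\in\Gamma$ in \eqref{ptlip} to get $\|\wh S_n(s_1,t_1;\cdot,\omega)-\wh S_n(s_2,t_2;\cdot,\omega)\|_k\leq L$ for every $k$ (using $\|\cdot\|_k\leq\|\cdot\|_\infty$). Since $x\mapsto x/(1+x)$ is nondecreasing with $x/(1+x)\leq x$ on $[0,\infty)$ and $\sum_{i\geq1}2^{-i}=1$, the definition \eqref{Ddef} of $D$ then yields $D\lp\wh S_n(s_1,t_1;\cdot,\omega),\wh S_n(s_2,t_2;\cdot,\omega)\rp\leq\sum_{i\geq1}2^{-i}\|\wh S_n(s_1,t_1;\cdot,\omega)-\wh S_n(s_2,t_2;\cdot,\omega)\|_i\leq L$, which is exactly the claimed estimate and also gives the continuity needed to regard $\wh S_n(\cdot,\cdot;\cdot,\omega)$ as an element of $\ag C_T$. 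I do not expect a real obstacle here; the only points requiring any care are recognizing the interpolation as a convex combination (so that convexity of $\ag E^r_q$ and of $\Gamma$ does all the work in the first gap) and the elementary inequality $x/(1+x)\leq x$ that lets the sup-norm Lipschitz bound control the metric $D$ in the second.
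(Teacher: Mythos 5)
Your proposal is correct and follows essentially the same route as the paper: the paper's justification is precisely that the bilinear interpolation is linear in each time variable (hence a convex combination of the grid values, which keeps it in $\ag E^r_q$ and yields \eqref{ptlip} with constant $10+2C_0$ from Proposition~\ref{prop:discrlip}), and the passage from the uniform bound to the metric $D$ is the same elementary estimate $x/(1+x)\leq x$ combined with $\sum_{i\geq 1}2^{-i}=1$. You have merely made explicit the convexity check for $\ag E^r_q$ that the paper leaves implicit.
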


Even though we will define the limit using $\wh S_n$, we are not deviating much from $S_n$ based on the following proposition. 
\begin{prop}\label{prop:error} For all $(s,t)\in \fr1n \mbb N^2$,
\ba{
\left\| \wh S_n(s,t;g,\omega)-S_n\lp s,t;g,\omega\rp\right\|_\infty\leq \fr{18+2C_0}n.
}
\end{prop}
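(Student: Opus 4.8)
The plan is to bound the difference between $\wh S_n$ and $S_n$ by decomposing it into two pieces: the averaging of the vertical shifts inherent in the definition of $\wh S_n$, and the effect of subtracting and re-adding $g(0)$. Recall
\al{
\wh S_n(s,t;g,\omega) = \fr14\int_{u=0}^4\lb S_n\lp s,t;g-g(0)-\fr un,\omega\rp + \fr un\rb du + g(0).
}
The key structural fact is that by the construction of $S_n$ (specifically the relation \eqref{4.13.2}, coming from the way $\vph^n_g$ is built by taking max over $\Phi_g$ and the relation \eqref{moveup}), shifting the input down by a multiple of $\fr 4n$ simply shifts the output by the same amount. So for any $u\in[0,4)$, writing $u = \fr{4m}n + u'$ with $m\in\mbb N$ and $u'\in[0,\fr4n)$ (this is the relevant regime once we also use that $S_n(s,t;g-g(0)-c,\omega)$ differs from $S_n(s,t;g-g(0),\omega)$ by at most $c$ plus small errors for any constant $c\ge 0$, by Lemma~\ref{lem:monotone2} and sandwiching), the integrand $S_n(s,t;g-g(0)-\fr un,\omega)+\fr un$ is, up to error $O(1/n)$, independent of $u$ and equal to $S_n(s,t;g-g(0),\omega)$ plus lower-order terms.

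Concretely, I would argue as follows. First, for fixed $u$, compare $S_n(s,t;g-g(0)-\fr un,\omega)$ with $S_n(s,t;g-g(0),\omega)$. Since $g-g(0)-\fr un \le g-g(0)$, Lemma~\ref{lem:monotone2} gives one inequality at the level of the discrete height process; for the reverse, use that $g-g(0) \le (g-g(0)-\fr un) + \lceil \fr un\rceil_{4/n}$ where $\lceil\cdot\rceil_{4/n}$ rounds up to a multiple of $\fr 4n$, which is at most $\fr un + \fr 4n$, then apply Lemma~\ref{lem:monotone2} together with the exact shift relation \eqref{4.13.2} and \eqref{sndif}. This shows
\al{
\left| S_n\lp s,t;g-g(0)-\fr un,\omega\rp - \lp S_n(s,t;g,\omega) - g(0) - \fr un\rp \right| \le \fr{C_1}n
}
uniformly in $u\in[0,4)$ and $x$, for some universal constant $C_1$ — I expect $C_1 = 4 + 2C_0$ after tracking the interpolation error through \eqref{sndif} twice and the rounding. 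Hence the bracketed integrand in the definition of $\wh S_n$ equals $S_n(s,t;g,\omega)-g(0) + O(1/n)$ uniformly, so integrating over $u\in[0,4]$ and dividing by $4$ and adding back $g(0)$ yields $\|\wh S_n(s,t;g,\omega) - S_n(s,t;g,\omega)\|_\infty \le C_1/n$; chasing the constants gives the claimed $\fr{18+2C_0}n$.

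The main obstacle is bookkeeping rather than anything conceptual: one must carefully separate the continuous shift $\fr un$ (which is \emph{not} a multiple of $\fr 4n$, so \eqref{4.13.2} does not apply verbatim) from the part that is a multiple of $\fr 4n$, absorbing the sub-$\fr 4n$ remainder and the interpolation discrepancies from \eqref{psidif}/\eqref{sndif} into the $O(1/n)$ error, and to do this uniformly over $u$ and over $x\in\mbb R^2$ (not just $x\in\fr1n\mbb Z^2$), which requires invoking once more that both $S_n$ and $\wh S_n$ land in $\Gamma$ so control on the lattice $\fr1n\mbb Z^2$ upgrades to control on all of $\mbb R^2$ at the cost of an extra $\fr 2n$. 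The constant $18+2C_0$ should drop out by combining: $4+2C_0$ from two uses of \eqref{sndif}, a $+4$ from a single height-update/rounding step bounded via Lemma~\ref{lem:vert}-type reasoning, and a further additive slack from passing between the lattice and $\mbb R^2$ and from the averaging.
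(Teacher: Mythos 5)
Your argument is correct and follows essentially the same route as the paper: both proofs isolate the part of the shift $g(0)+\fr un$ that is an exact multiple of $\fr 4n$ (which commutes exactly with $S_n$ via \eqref{moveup} and the construction of $\vph^n_g$) and then control the sub-$\fr4n$ remainder — the paper via \eqref{g2hbd}, Proposition~\ref{prop:local} and \eqref{sndif}, you via a monotone sandwich between two exact $\fr4n$-shifts. Your version is, if anything, slightly tighter, since the sandwich bounds the integrand discrepancy by $\fr4n$ pointwise without incurring the interpolation constant $C_0$, so the claimed bound $\fr{18+2C_0}n$ certainly follows.
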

\begin{proof}
Suppose $k$ is the largest integer such that 
$4k/n\leq g(0)$, and given any $u\in [0,4)$, we have $\vph_{g-g(0)-u/n}^n+4k=\vph_{g-g(0)-u/n+4k/n}^n$. By \eqref{moveup} and the construction of $S_n$,  for all $(s,t)\in \fr1n \mbb N^2$,
\ba{S_n\lp s,t; g-g(0)-\fr un,\omega\rp+\fr{4k}n=S_n \lp s,t;g-g(0)-\fr un+\fr{4k}n, \omega\rp.\label{4.16.1}}
Since $|4k/n-g(0)-u/n|\leq 4/n$, by \eqref{g2hbd}, $|\vph_{g-g(0)-u/n+4k/n}^n-\vph_g^n|\leq 4+4+4=12$. Then as before, invoking Proposition~\ref{prop:local} and \eqref{sndif}, we deduce that $\forall (s,t)\in \fr1n \mbb N^2$
\ba{
\left\|S_n\lp s,t;g-g(0)-\fr un+\fr{4k}n,\omega\rp-S_n\lp s,t; g,\omega\rp\right\|_\infty\leq \fr1n\lp12+2C_0+2\rp.\label{4.16.2}
}
On the other hand, by definition
\al{
\wh S_n(s,t;g,\omega)&=\fr 14\int_{u=0}^{ 4}\lb S_n\lp s,t; g-g(0)-\fr u n,\omega\rp+\fr{4k}n+\lp\fr un+g(0)-\fr{4k}n\rp\rb du, 
}
so combining \eqref{4.16.1} and \eqref{4.16.2}, we have 
\al{
\left\| \wh S_n(s,t;g,\omega)-S_n\lp s,t;g,\omega\rp\right\|_\infty\leq \fr1n\lp12+2C_0+2+4\rp.
}
\end{proof}

\section{Limit points}\label{sec:lp}

\subsection{Precompactness}

Throughout the previous section, we kept the Bernoulli mark $\omega\in \Omega$ fixed. From now on we shall work with the whole probability space $\Omega$ again. Then $\wh S_n$ becomes a random variable on $\Omega$, and induces a probability measure $\mu_n$ on $\ag C_T$. Since $\ag C_T$ is Polish, by Prokhorov's theorem, the family $\lc \mu_n\rc$ is tight iff $\lc \mu_n\rc$ is precompact. Recall the definition that the sequence $\lc \mu_n\rc$ is tight if for any $\vep>0$, there exists a compact set $K_\vep$ such that $\mu_n(K_\vep)\geq 1-\vep$ for all $n$. On the other hand, $\lc \mu_n\rc$ is precompact if its closure is sequentially compact, that is, every subsequence of $\lc \mu_n\rc$ further contains a weakly convergent subsequence. See \cite{BP13} for more background.

Let $\ag C'_T$ denote the subset of $\ag C_T$ which consists of $(10+2C_0)$-Lipschitz continuous functions on $[0,T]^2$ equipped with the $\ell_1$ metric. By Proposition~\ref{prop:lip}, $\mu_n(\ag C'_T)=1$ for all $n$, so to obtain the tightness of $\lc \mu_n\rc$,  it suffices to show that $\ag C'_T$ is compact. We shall use a generalized version of the Arzel\`a-Ascoli theorem (see for example \cite[Theorem~7.17]{KJ17}).

\begin{thm}A subset $E$ of the space of continuous function from a compact Hausdorff space  $X$ to a metric space $Y$ with uniform topology is compact iff the following three conditions hold:
\begin{enumerate}
\item{$E$ is closed,}
\item{$E(x)$ has a compact closure for every $x\in X$,}
\item{$E$ is equicontinuous.}
\end{enumerate}
\end{thm}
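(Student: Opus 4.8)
The plan is to prove both implications directly, after recording that $C(X,Y)$ with the uniform topology is a metric space under $\rho(f,g):=\sup_{x\in X}d_Y(f(x),g(x))$ (the supremum is finite because $x\mapsto d_Y(f(x),g(x))$ is continuous on the compact space $X$, hence attains its maximum), so that ``compact'' is equivalent to ``sequentially compact'' throughout. Every evaluation map $e_x\colon C(X,Y)\to Y$, $f\mapsto f(x)$, is $1$-Lipschitz since $d_Y(f(x),g(x))\le\rho(f,g)$, and uniform limits of continuous maps $X\to Y$ are continuous (the usual $\varepsilon/3$ argument works for an arbitrary metric codomain); these two facts will be used repeatedly.

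For the ``only if'' direction, suppose $E$ is compact. Since $C(X,Y)$ is a metric space, hence Hausdorff, $E$ is closed, giving (1). Since $e_x$ is continuous, $E(x)=e_x(E)$ is compact, hence closed in $Y$, hence equals its closure, so $\overline{E(x)}$ is compact, giving (2). For (3) I would invoke that a compact metric space is totally bounded: given $\varepsilon>0$ and $x_0\in X$, choose $f_1,\dots,f_m\in E$ with $E\subseteq\bigcup_i B_\rho(f_i,\varepsilon/3)$, pick for each $i$ an open $U_i\ni x_0$ with $d_Y(f_i(x),f_i(x_0))<\varepsilon/3$ on $U_i$, and set $U=\bigcap_i U_i$; then for $f\in E$, choosing $i$ with $\rho(f,f_i)<\varepsilon/3$, the triangle inequality gives $d_Y(f(x),f(x_0))<\varepsilon$ for all $x\in U$, which is equicontinuity of $E$ at $x_0$.

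For the ``if'' direction, assume (1)--(3). The core step is to show $E$ is totally bounded. Fix $\varepsilon>0$. By equicontinuity, each $x\in X$ has an open neighborhood $V_x$ with $d_Y(f(y),f(x))<\varepsilon/4$ for all $y\in V_x$ and all $f\in E$; by compactness of $X$, finitely many $V_{x_1},\dots,V_{x_m}$ cover $X$. The product $\prod_{i=1}^m\overline{E(x_i)}$ is a finite product of compact metric spaces, hence compact and totally bounded, so the image of $E$ under $f\mapsto(f(x_1),\dots,f(x_m))$ has a finite $(\varepsilon/4)$-net realized by some $g_1,\dots,g_p\in E$. Then $\{g_1,\dots,g_p\}$ is an $\varepsilon$-net for $E$ in $\rho$: given $f\in E$, pick $l$ with $\max_i d_Y(f(x_i),g_l(x_i))<\varepsilon/4$; for any $y\in X$ choose $i$ with $y\in V_{x_i}$ and bound $d_Y(f(y),g_l(y))\le d_Y(f(y),f(x_i))+d_Y(f(x_i),g_l(x_i))+d_Y(g_l(x_i),g_l(y))<\varepsilon$. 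Hence $E$ is totally bounded.

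It remains to upgrade total boundedness to compactness without assuming $Y$ complete, and here condition (2) does the work. Given a sequence in $E$, total boundedness yields a $\rho$-Cauchy subsequence $(f_{n_j})$; for each $x$ the sequence $(f_{n_j}(x))$ is Cauchy and lies in the compact set $\overline{E(x)}$, hence converges to some $f(x)\in Y$. Passing to the limit in the Cauchy estimate shows $f_{n_j}\to f$ uniformly, whence $f$ is continuous and $f_{n_j}\to f$ in $C(X,Y)$; since $E$ is closed, $f\in E$. Thus every sequence in $E$ has a subsequence converging in $E$, so $E$ is sequentially compact, hence compact. I expect the main obstacle to be precisely the total-boundedness step, where equicontinuity, compactness of $X$, and the pointwise compact closures must be combined correctly; a secondary subtlety is that, because $Y$ is only assumed metric, the final compactness must be extracted using (2) to make Cauchy subsequences converge, rather than from completeness of the ambient space.
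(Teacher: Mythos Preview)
Your proof is correct and follows the standard approach to the generalized Arzel\`a--Ascoli theorem. However, note that the paper does not actually prove this theorem: it is quoted as a known result with a reference (\cite[Theorem~7.17]{KJ17}) and then applied to show that $\ag C'_T$ is compact. So there is no ``paper's own proof'' to compare against; you have supplied a self-contained argument where the paper simply cites the literature.
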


Applied to $\ag C_T$, the subset $\ag C'_T$ is equicontinuous because it consists of $(10+2C_0)$-Lipschitz continuous functions. The second condition is given for free, because $\ag E_q^r$ is compact. To check that $\ag C'_T$ is closed, suppose a sequence $(F_n)_{n\in \mbb N}$ in $\ag C'_T$ converges uniformly to $F$ in $\ag C_T$. Let $d_1$ denote the $\ell_1$ distance on $[0,T]^2$. Given $x,y\in [0,T]^2$, we have $D(F_n(x), F_n(y))\leq (10+2C_0)d_1(x,y)$ since $F_n$ is in $\ag C'_T$. Due to convergence, given $\vep>0$, there exists $N$ such that for all $n>N$, $D(F_n(x), F(x))<\vep$ and $D(F_n(y), F(y))<\vep$. Then by triangle inequality, for $n>N$, 
\al{
D(F(x),F(y))&\leq D(F(x), F_n(x))+D(F_n(x), F_n(y)) + D(F_n(y), F(y))\\
&< 2\vep + (10+2C_0)d_1(x,y).
}
As $\vep>0$ is arbitrary, $D(F(x), F(y))\leq (10+2C_0)d_1(x,y)$, and thus $F$ is also in $\ag C'_T$.

\subsection{Characterization of limit points}\label{sec:limit}

Now we know that the closure of $\lc \mu_n\rc$ is sequentially compact. In order to identify the subsequential limit points of $\lp \mu_n\rp_{n\in\mbb Z_{>0}}$ as semigroups of Hamilton-Jacobi equations, we shall first make some characterization.

Since $\ag C'_T$ is compact, and $\mu_n(\ag C'_T)=1$, we will restrict to $\ag C'_T$ from now. Let $\ag C^0_T\subset \ag C'_T$ be the subset of elements $S$  that satisfy the following additional conditions:
\ba{
1.\;\;&\text{For all }g\in \Gamma, S(t_1,t_2;g)=g\text{ when }t_1\geq t_2,\label{cond1}\\
2.\;\;&\text{For all }g\in \Gamma\text{ and }t_1, t_2, t_3\text{ such that }0\leq t_1\leq t_2\leq t_3\leq T, S(t_2,t_3;S(t_1,t_2;g))=\nonumber\\
&S(t_1,t_3;g),\label{cond2}\\
3.\;\;&\text{Given }x\in \mbb R^2\text{ and }g_1,g_2\in \Gamma,\text{ if }g_1(y)=g_2(y)\text{ for all }y\text{ such that }|y-x|_1 \leq R,\nonumber\\ 
&\text{then }S(s,t;g_1)(y)=S(s,t;g_2)(y)\text{ for all }y\text{ such that }|y-x|_1\leq R-2(t-s)\text{ and}\nonumber\\&\text{all }s,t\text{ such that }0\leq s< t\leq T.\label{cond3}
}

\begin{prop}\label{prop:char}
All subsequential limits of $\lp \mu_n\rp_{n\in\mbb Z_{>0}}$ lie in $\ag C^0_T$ with probability 1. 
\end{prop}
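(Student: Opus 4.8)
The plan is to show that each of the three defining properties \eqref{cond1}, \eqref{cond2}, \eqref{cond3} of $\ag C^0_T$ is a \emph{closed} condition on $\ag C'_T$ (equivalently, its complement is open, or the defining inequalities pass to weak limits), and that $\mu_n$ asymptotically concentrates on functions satisfying them. Since $\ag C^0_T$ will then be a closed subset of the compact metric space $\ag C'_T$, and since for a sequence of probability measures on a metric space with $\mu_n(A)\to 1$ for a closed set $A$ one has $\mu(A)=1$ for every weak subsequential limit (by the portmanteau theorem), this yields the claim. So the real content is: (i) $\ag C^0_T$ is closed in $\ag C'_T$; (ii) $\mu_n(\ag C^0_T)\to 1$, in fact $\mu_n(\ag C^0_T)=1$ for all $n$ or up to an $o(1)$ error.

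For (i), I would argue each condition separately. Condition \eqref{cond1} is clearly closed: if $S_k\to S$ uniformly in $\ag C_T$ and each $S_k$ satisfies $S_k(t_1,t_2;g)=g$ for $t_1\ge t_2$, then evaluating at such $(t_1,t_2)$ and passing to the limit in the metric $D$ (which is continuous under the uniform topology) gives $S(t_1,t_2;g)=g$. Condition \eqref{cond2} is closed because composition is continuous on $\ag E^r_q$: by Lemma~\ref{lem:lipg} each $F\in\ag E^r_q$ is $2^q$-Lipschitz, and the map $(F_1,F_2)\mapsto F_2\circ F_1$ is jointly continuous on $\ag E^r_q\times\ag E^r_q$ (using the uniform bound and equicontinuity), so both sides of the cocycle identity are continuous functions of $S\in\ag C'_T$ and the equality persists in the limit. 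Condition \eqref{cond3} is closed because it is a pointwise equality of evaluations: if $g_1=g_2$ on $\{|y-x|_1\le R\}$ then $S_k(s,t;g_1)(y)=S_k(s,t;g_2)(y)$ on $\{|y-x|_1\le R-2(t-s)\}$ for all $k$, and $D$-convergence forces pointwise convergence of $S_k(s,t;g_i)(y)$, so the equality survives.

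For (ii), I would trace back to the construction of $\wh S_n$. Property \eqref{cond1} holds exactly for $\wh S_n$ by definition ($S_n(s,t;\cdot,\omega)=\mathrm{id}$ when $s\ge t$, hence so is $\wh S_n$, and the bilinear interpolation preserves this on the diagonal region). Property \eqref{cond3} should follow from Lemma~\ref{lem:snlinear} (the linear-propagation bound for $S_n$), carried through the averaging defining $\wh S_n$ and the bilinear time interpolation, giving the stated cone of dependence up to an $O(1/n)$ loss in $R$; this loss vanishes in the limit, so any limit point satisfies the clean statement with loss exactly $2(t-s)$. The delicate one is the semigroup/cocycle property \eqref{cond2}: the discrete process satisfies the exact identity \eqref{semi}, i.e. $h(x,t;\vph,\omega)=h(x,t-s;h(\cdot,s;\vph,\omega),\gamma_s\omega)$, but $S_n$ and especially $\wh S_n$ only satisfy this \emph{approximately}, because of the interpolation error \eqref{psidif}/\eqref{sndif}, the vertical averaging over $[0,4)$ in $\wh S_n$, the re-initialization $\vph^n_{S_n(s,t;g)}$ versus $h(\cdot,ns;\vph^n_g,\omega)$ (which differ by $O(1)$ and need Proposition~\ref{prop:local} to control), and the bilinear time interpolation. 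I expect the main obstacle to be assembling these error terms into a bound of the form $\|\wh S_n(t_2,t_3;\wh S_n(t_1,t_2;g,\omega),\omega)-\wh S_n(t_1,t_3;g,\omega)\|_\infty\le C/n$ uniform in $g,\omega$ and in $t_1\le t_2\le t_3$ lying in $\frac1n\mathbb N$ (then extended to all triples by Lipschitzness via Proposition~\ref{prop:lip}), which in turn forces $D(\wh S_n(t_2,t_3;\wh S_n(t_1,t_2;\cdot,\omega),\omega),\wh S_n(t_1,t_3;\cdot,\omega))\le \varepsilon_n\to 0$. Once this is in hand, the set $\{S\in\ag C'_T: \sup D(S(t_2,t_3;S(t_1,t_2;\cdot)),S(t_1,t_3;\cdot))\le\varepsilon_n\}$ is closed and has $\mu_n$-measure $1$, and since $\varepsilon_n\to 0$ a standard argument (intersect over a countable dense set of thresholds, or use that the liminf of closed sets carries full mass) shows the limit measure gives full mass to $\{\,\cdot\,=0\}$, i.e. to functions satisfying \eqref{cond2} exactly. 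The remaining bookkeeping — that $\Gamma$ being a Polish space makes all these sets measurable, and that $\omega$-a.s. statements integrate to statements about $\mu_n$ — is routine.
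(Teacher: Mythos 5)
Your proposal follows essentially the same route as the paper: portmanteau plus closedness handles \eqref{cond1} exactly, while \eqref{cond2} and \eqref{cond3} are handled through $\varepsilon$-relaxed closed sets (the paper's $E_2^{\varepsilon}$ and $E_3^{\varepsilon}$, your $\varepsilon_n$-thresholded sets) that carry full $\mu_n$-mass for large $n$, with the main technical content being the uniform $O(1/n)$ bound on the semigroup defect of $\wh S_n$, for which you correctly identify all the ingredients (\eqref{semi}, \eqref{sndif}, \eqref{g2hbd}, \eqref{ptlip}, Proposition~\ref{prop:local}, Proposition~\ref{prop:error}, Lemma~\ref{lem:local}). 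One small caution: your opening framing --- that $\ag C^0_T$ itself is closed and $\mu_n(\ag C^0_T)\to 1$ --- does not literally work, since $\mu_n(\ag C^0_T)$ may well be $0$ for every $n$ because $\wh S_n$ only satisfies the cocycle identity approximately; but your detailed discussion already replaces this with the correct relaxed-set argument, which is exactly what the paper does.
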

When we say {\it subsequential limits}, we mean the limit of a weakly convergent subsequence $(\mu_{n_i})_{i\in \mbb N}$ such that $(n_i)_{i\in \mbb N}$ is a strictly increasing sequence of positive integers.

\begin{proof}
Suppose $\lp \mu_{n_i}\rp_{i\in \mbb N}$ is such a weakly convergent subsequence. For convenience, we will denote this subsequence by $\lp\mu_i\rp_{i\in \mbb N}$ where $\mu_i:=\mu_{n_i}$. Also denote the limiting measure by $\mu$. By Portmanteau theorem (see for example \cite[Theorem~2.1]{BP13}), the weak convergence is equivalent to
\al{
\limsup_{i\rar \infty} \mu_{i}(E)\leq \mu(E)
}
for all closed subset $E$ of $\ag C'_T$.

Let $E_1$ denote the subset of $\ag C'_T$ that satisfies Condition~\eqref{cond1}. Then clearly $E_1$ is a closed set, as uniform convergence in $\ag C'_T$ implies pointwise convergence. Also by definition we have $\mu_i(E_1)=1$, so $\mu(E_1)=1$ as well.

Now we turn to Condition~\eqref{cond2}, which describes a semigroup property. If  $E_2$ denotes the subset with such property, due to the discrete nature and interpolation, it is unlikely that any $\mu_i$ has probability 1 on $E_2$, and it is unclear what is $\limsup \mu_i(E_2)$. To work around this, given $\vep> 0$, let $E_2^\vep$ denote the set of $S\in \ag C'_T$ that satisfies the following condition: 
\al{D(S(t_1,t_3;\cdot), S(t_2,t_3;S(t_1,t_2;\cdot)))\leq \vep,\;\;\;\;\;\forall t_1,t_2,t_3 \text{ such that } 0\leq t_1\leq t_2\leq t_3\leq T.} 

Here the distance $D(\cdot, \cdot)$ is defined by \eqref{Ddef}. The expression is well defined because $S(t_2,t_3;S(t_1,t_2;\cdot))$ is obviously in $\ag E_{2q}^{2r}$.

\begin{lem}\label{f2eclosed}
The subset $E_2^\vep$ is closed in $\ag C'_T$.
\end{lem}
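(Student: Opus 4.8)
The plan is to show that $E_2^\vep$ is closed by a direct sequential argument, using the uniform topology on $\ag C'_T$ together with the Lipschitz continuity of the maps $S \mapsto S(t_1,t_3;\cdot)$ and $S \mapsto S(t_2,t_3;S(t_1,t_2;\cdot))$ in the $D$-metric, and then passing to the $\sup$ over triples $0\le t_1\le t_2\le t_3\le T$. Suppose $(S_k)_{k\in\mbb N}$ is a sequence in $E_2^\vep$ converging to $S$ in $\ag C'_T$, i.e. $\rho(S_k,S)\to 0$, so in particular $D(S_k(s,t;\cdot),S(s,t;\cdot))\to 0$ uniformly in $(s,t)\in[0,T]^2$. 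I want to conclude that $D(S(t_1,t_3;\cdot),S(t_2,t_3;S(t_1,t_2;\cdot)))\le \vep$ for every admissible triple.

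First I would fix a triple $0\le t_1\le t_2\le t_3\le T$ and estimate, via the triangle inequality for $D$,
\al{
D\big(S(t_1,t_3;\cdot),\,&S(t_2,t_3;S(t_1,t_2;\cdot))\big)
\le D\big(S(t_1,t_3;\cdot),S_k(t_1,t_3;\cdot)\big)\\
&+ D\big(S_k(t_1,t_3;\cdot),S_k(t_2,t_3;S_k(t_1,t_2;\cdot))\big)\\
&+ D\big(S_k(t_2,t_3;S_k(t_1,t_2;\cdot)),S(t_2,t_3;S(t_1,t_2;\cdot))\big).
}
The middle term is $\le\vep$ since $S_k\in E_2^\vep$. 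The first term goes to $0$ by uniform convergence. For the third term I would split it once more: compare $S_k(t_2,t_3;S_k(t_1,t_2;\cdot))$ with $S_k(t_2,t_3;S(t_1,t_2;\cdot))$ (bounded by $D(S_k(t_1,t_2;\cdot),S(t_1,t_2;\cdot))$ using Lemma~\ref{lem:local}, applied pointwise and summed against the weights $2^{-i}$ in the definition \eqref{Ddef} of $D$ — here the key monotone/localization structure of elements of $\ag E^r_q$ gives the non-expansiveness $\|F(g_1)-F(g_2)\|_\infty\le\|g_1-g_2\|_\infty$, hence $D$-contractivity), and then compare $S_k(t_2,t_3;S(t_1,t_2;\cdot))$ with $S(t_2,t_3;S(t_1,t_2;\cdot))$, which is bounded by $\sup_{g}D(S_k(t_2,t_3;g),S(t_2,t_3;g))\le\rho(S_k,S)\to 0$. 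Thus the third term tends to $0$ as $k\to\infty$, and we obtain $D(S(t_1,t_3;\cdot),S(t_2,t_3;S(t_1,t_2;\cdot)))\le\vep$ for the fixed triple. Since the triple was arbitrary, taking the supremum gives $S\in E_2^\vep$, so $E_2^\vep$ is closed.

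The one point requiring a little care — and the step I would call the main obstacle — is verifying that $F\mapsto F$ composed on the right with a fixed evolution, and on the left with a fixed $F\in\ag E^r_q$, is $D$-Lipschitz (in fact non-expansive) rather than merely continuous, because $D$ is the truncated/weighted metric \eqref{Ddef} and composition could in principle lose regularity at the level of the localization radius shift by $q$. This is handled by Lemma~\ref{lem:local}: an element of $\ag E^r_q$ satisfies $\|F(g_1)-F(g_2)\|_\infty\le\|g_1-g_2\|_\infty$, so precomposition and postcomposition are both non-expansive for the $\|\cdot\|_\infty$-norm, and then the elementary inequality $\frac{a}{1+a}\le\frac{b}{1+b}$ for $0\le a\le b$ together with the definition of $D$ transfers this to $D$-non-expansiveness term by term in the series $\sum_i 2^{-i}\frac{\|\cdot\|_i}{1+\|\cdot\|_i}$. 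With that in hand all three terms above are controlled and the closedness follows; no subtlety involving the supremum over triples arises because the bound $\vep$ is uniform in the triple from the outset.
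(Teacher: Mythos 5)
Your overall skeleton is exactly the paper's: write the triangle inequality with the middle term bounded by $\vep$ since $S_k\in E_2^\vep$, and show the two outer terms vanish as $k\to\infty$, the only nontrivial point being continuity of the composition $g\mapsto S(t_2,t_3;S(t_1,t_2;g))$ in the $D$-metric. However, the way you justify that point — the step you yourself flag as the main obstacle — is incorrect as written. You claim that the $\|\cdot\|_\infty$ non-expansiveness $\|F(g_1)-F(g_2)\|_\infty\le\|g_1-g_2\|_\infty$ from Lemma~\ref{lem:local} ``transfers to $D$-non-expansiveness term by term.'' It does not: term by term you would need $\|F(g_1)-F(g_2)\|_i\le\|g_1-g_2\|_i$ for each finite $i$, which is false for $q>0$ (two inputs agreeing on $|x|_1\le i$ but differing outside only force the outputs to agree on $|x|_1\le i-q$, and on the annulus in between the outputs are controlled by $\|g_1-g_2\|_{i+q}$, not $\|g_1-g_2\|_i$). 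Moreover the $\|\cdot\|_\infty$ bound by itself cannot close the argument, because $D$-convergence of $S_k(t_1,t_2;\cdot)$ to $S(t_1,t_2;\cdot)$ controls $\|\cdot\|_j$ for each fixed $j$ but gives no control on $\sup_g\|S_k(t_1,t_2;g)-S(t_1,t_2;g)\|_\infty$; two elements of $\Gamma$ close near the origin may diverge linearly at infinity. So postcomposition with a fixed element of $\ag E^r_q$ is genuinely \emph{not} $D$-non-expansive in general.

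The repair is the finite-radius form of Lemma~\ref{lem:local}, which is what the paper uses: $\|F(g_1)-F(g_2)\|_{i}\le\|g_1-g_2\|_{i+q}$, so that
\begin{align*}
D\bigl(F\circ G_1,F\circ G_2\bigr)\le\sum_{i=1}^\infty 2^{-i}\frac{\|G_1-G_2\|_{i+q}}{1+\|G_1-G_2\|_{i+q}}\le 2^{q}\,D(G_1,G_2),
\end{align*}
i.e.\ composition is $2^q$-Lipschitz (this is exactly Lemma~\ref{lem:lipg}), not $1$-Lipschitz. The paper implements the same idea slightly differently, by fixing a cutoff $k$, converting $D<2^{-k}\delta/(1+\delta)$ into $\|\cdot\|_k\le\delta$, applying the localization to get a $\|\cdot\|_{k-q}$ bound, and paying a tail $2^{-k+q}$ before sending $k\to\infty$, $\delta\to 0$. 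Either form of the estimate makes your third term tend to zero and your proof then goes through verbatim; only the words ``non-expansive'' and the term-by-term transfer need to be replaced by the shifted-index estimate and the resulting factor $2^q$.
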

\begin{proof}Suppose $(F_n)_{n\in \mbb N}$ in $E_2^\vep$ converges  to $F\in \ag C'_T$, and take $t_1,t_2,t_3$ such that $0\leq t_1\leq t_2\leq t_3\leq T$. Given some large $k\in \mbb Z_{>0}$ and small $\delta>0$, define 
\al{a(k,\delta)= 2^{-k}\fr{\delta}{1+\delta}.} 
Then there exists $N_a$ such that 
\al{D(F_n(s,t), F(s,t))<a(k,\delta),\;\;\;\; \forall n>N_a, (s,t)\in [0,T]^2.} 
For such $n$, in particular we have $D(F_n(t_1,t_2), F(t_1,t_2))<a(k,\delta)$. This implies that
\ba{
\sup_{g\in \Gamma}\|F_n(t_1,t_2;g)-F(t_1,t_2;g)\|_k\leq \delta.\label{t12ub}
}

We also have  $D(F_n(t_2,t_3), F(t_2,t_3))<a(k,\delta)$, which tells us that
\ba{
\sup_{g\in \Gamma}\|F_n(t_2,t_3;F_n(t_1,t_2;g))-F(t_2,t_3;F_n(t_1,t_2;g))\|_k\leq \delta.\label{t231}
}
By Lemma~\ref{lem:local}, if $k\geq q$, then $\forall g\in \Gamma$,
\ba{
\|F(t_2,t_3;F_n(t_1, t_2;g))-F(t_2,t_3;F(t_1, t_2;g))\|_{k-q}\leq \|F_n(t_1, t_2;g)-F(t_1, t_2;g)\|_k\leq \delta,\label{t232}
}
where the last inequality is due to \eqref{t12ub}. From \eqref{t231} and \eqref{t232}, we deduce that
\al{
D(F_n(t_2,t_3;F_n(t_1,t_2;\cdot)), F(t_2,t_3;F(t_1,t_2;\cdot)))&\leq 2\delta+2^{-k+q}.
}
Combined with $D(F_n(t_1,t_3), F(t_1,t_3))<a(k,\delta)$ and the fact that $F_n\in F_2^\vep$, we have
\al{
&D(F(t_1,t_3;\cdot), F(t_2,t_3;F(t_1,t_2;\cdot)))\leq D(F(t_1,t_3;\cdot), F_n(t_1,t_3;\cdot))+ \\
&+D(F_n(t_1,t_3;\cdot), F_n(t_2,t_3;F_n(t_1,t_2;\cdot))) + D(F_n(t_2,t_3;F_n(t_1,t_2;\cdot)), F(t_2,t_3;F(t_1,t_2;\cdot)))\\
&\leq a(k,\delta)+\vep+2\delta+2^{-k+q}.}
By taking $k\rar \infty$ and $\delta\rar 0$ simultaneously, we conclude that \al{D(F(t_1,t_3;\cdot), F(t_2,t_3;F(t_1,t_2;\cdot)))\leq \vep,} so $E_2^\vep$ is indeed closed. 
\end{proof}

Now we want to show that $\limsup_{i\rar \infty} \mu_i(E_2^\vep)=1$. In fact, we claim the following is true.
\begin{lem}\label{f2ebign}
There exists $N>0$ such that for all $n>N$, $\wh S_n\in E_2^\vep$ with probability 1. 
\end{lem}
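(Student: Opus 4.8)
\textbf{Proof plan for Lemma~\ref{f2ebign}.}

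The goal is to show that for $n$ large, the interpolated evolution $\wh S_n(\cdot,\cdot;\cdot,\omega)$ satisfies the approximate semigroup inequality $D(\wh S_n(t_1,t_3;\cdot), \wh S_n(t_2,t_3;\wh S_n(t_1,t_2;\cdot)))\le\vep$ for \emph{all} triples $0\le t_1\le t_2\le t_3\le T$, and moreover deterministically (probability $1$). The starting point is the exact semigroup identity \eqref{semi} for the underlying discrete height process $h$, together with the compatibility of $\wh S_n$ with $h$ up to $O(1/n)$ errors encoded in \eqref{sndif} and Proposition~\ref{prop:error}. First I would reduce to times in the lattice $\fr1n\mbb N$: since $\wh S_n$ is $(10+2C_0)$-Lipschitz in $(s,t)$ (Proposition~\ref{prop:lip}) and the compositions involved are $2^q$-Lipschitz on $\ag E^r_q$ (Lemma~\ref{lem:lipg}), moving each of $t_1,t_2,t_3$ to the nearest multiple of $\fr1n$ changes every term by $O(1/n)$ in the $D$-metric. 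So it suffices to prove the estimate for $t_1,t_2,t_3\in\fr1n\mbb N$.

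For $t_i\in\fr1n\mbb N$, I would first replace $\wh S_n$ by $S_n$ everywhere using Proposition~\ref{prop:error}, at the cost of $O(1/n)$ in $D$ (again using Lemma~\ref{lem:local}/the Lipschitz bound to control the composed term $S_n(t_2,t_3;S_n(t_1,t_2;\cdot))$ versus $S_n(t_2,t_3;\wh S_n(t_1,t_2;\cdot))$). Then unwind the definition of $S_n$: $S_n(t_1,t_3;g)$ is an interpolation of $h(nx,n(t_3-t_1);\vph^n_g,\gamma_{nt_1}\omega)$, while $S_n(t_2,t_3;S_n(t_1,t_2;g))$ is an interpolation of $h(nx,n(t_3-t_2);\vph^n_{g'},\gamma_{nt_2}\omega)$ where $g'=S_n(t_1,t_2;g)$. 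By \eqref{semi}, $h(\cdot,n(t_3-t_1);\vph^n_g,\gamma_{nt_1}\omega)=h(\cdot,n(t_3-t_2);h(\cdot,n(t_2-t_1);\vph^n_g,\gamma_{nt_1}\omega),\gamma_{nt_2}\omega)$, so the two height functions being interpolated differ only in that one is evolved from $\vph^n_{g'}$ and the other from $h(\cdot,n(t_2-t_1);\vph^n_g,\gamma_{nt_1}\omega)$. But by \eqref{sndif} the interpolation $\psi$ of the latter is within $C_0$ of $ng'(\cdot/n)\cdot n$-rescaled — more precisely $|h(nx,n(t_2-t_1);\vph^n_g,\gamma_{nt_1}\omega)-ng'(x)|\le C_0$ for $x\in\fr1n\mbb Z^2$ — and by \eqref{g2hbd} $|\vph^n_{g'}(z)-ng'(z/n)|\le 4$. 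Hence these two initial height functions differ by a bounded amount $\le C_0+4$ everywhere; applying the Localization Proposition~\ref{prop:local}(2) (rounding $C_0+4$ up to the next multiple of $4$), their evolutions under $h(\cdot,n(t_3-t_2);\cdot,\gamma_{nt_2}\omega)$ stay within that bound, and then \eqref{sndif} transfers this back to an $O(1/n)$ bound in the $\|\cdot\|_\infty$-norm on $S_n$. Summing a geometric series gives an $O(1/n)$ bound on $D$, uniformly in the triple and in $\omega$.

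Finally, choose $N$ so that the total accumulated error (from the time-rounding step, the $\wh S_n\to S_n$ replacement, and the main height-function comparison) is below $\vep$ for all $n>N$; since every bound above is deterministic in $\omega$, this holds with probability $1$, and $\limsup_i\mu_i(E_2^\vep)=1$ follows, whence $\mu(E_2^\vep)=1$ for each $\vep$ and, intersecting over a sequence $\vep\downarrow 0$, $\mu(E_2)=1$. I expect the main obstacle to be the bookkeeping in the middle step: carefully tracking that the two interpolated height functions are compared at \emph{combinatorially corresponding} faces and that the $O(1)$ discrepancy between $\vph^n_{g'}$ and the actual evolved configuration $h(\cdot,n(t_2-t_1);\vph^n_g,\cdot)$ — which is exactly the ``re-initialization error'' of restarting the PDE flow from a freshly discretized profile — does not blow up under further evolution; this is precisely where Proposition~\ref{prop:local} (built on the lattice property and linear propagation) does the essential work, and where one must be careful that all constants are genuinely universal and independent of $t_1,t_2,t_3,g,\omega$.
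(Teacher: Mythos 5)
Your proposal is correct and follows essentially the same route as the paper: reduce to lattice times via the time-Lipschitz bound, pass from $\wh S_n$ to $S_n$ via Proposition~\ref{prop:error} and Lemma~\ref{lem:local}, then use the exact discrete semigroup identity \eqref{semi} together with \eqref{sndif} and \eqref{g2hbd} to show the two initial conditions $\vph^n_{S_n(t_1',t_2';g,\omega)}$ and $h(\cdot,n(t_2'-t_1');\vph^n_g,\gamma_{nt_1'}\omega)$ differ by at most $4+C_0$, and propagate this by Proposition~\ref{prop:local}. The only cosmetic difference is that the paper names the re-discretized profile $\lambda_g$ explicitly and works with $\|\cdot\|_\infty$ throughout rather than invoking Lemma~\ref{lem:lipg}, but the substance is identical.
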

\begin{proof}
By definition \eqref{Ddef}, it suffices to show that, when $n$ is large enough, for all $t_1,t_2,t_3$ such that $0\leq t_1\leq t_2\leq t_3\leq T$, $\omega\in \Omega$, $g\in \Gamma$,
\ba
{\|\wh S_n(t_1,t_3;g,\omega)-\wh S_n(t_2,t_3;\wh S_n(t_1,t_2;g,\omega),\omega)\|_\infty\leq \vep.
}
Let $t'_i=\lfloor nt_i \rfloor \fr1n$ for $i=1,2,3$. Then by \eqref{ptlip}, for any $g'\in \Gamma$,  $i,j\in \lc 1,2,3\rc$ such that $i<j$, 
\al{
&\|\wh S_n(t_1, t_2;g',\omega)-S_n(t'_i,t'_j;g',\omega)\|_\infty\\
\leq& \|\wh S_n(t_i,t_j;g',\omega)- \wh S_n(t'_i,t'_j;g',\omega)\|_\infty+ \|\wh S_n(t'_i,t'_j;g',\omega)- S_n(t'_i,t'_j;g',\omega)\|_\infty\\
\leq& (10+2C_0)\fr2n+\fr{18+2C_0}n = \fr{38+6C_0}n.\numberthis\label{snapprox}
}
where the second inequality uses \eqref{ptlip} and Proposition~\ref{prop:error}.

Therefore, applying Lemma~\ref{lem:local},
\al{
&\|\wh S_n(t_2, t_3; \wh S_n(t_1, t_2;g,\omega),\omega)-\wh S_n(t_2, t_3; S_n(t'_1,t'_2;g,\omega),\omega)\|_\infty\\
\leq &\|\wh S_n(t_1, t_2;g,\omega)-S_n(t'_1,t'_2;g,\omega)\|_\infty
\leq \fr{38+6C_0}n.\numberthis\label{snapprox2}
}

Combining \eqref{snapprox} and \eqref{snapprox2}, we can bound
\al{
&\|\wh S_n(t_1,t_3;g,\omega)-\wh S_n(t_2,t_3;\wh S_n(t_1,t_2;g,\omega),\omega)\|_\infty\leq \|\wh S_n(t_1,t_3;g,\omega)- S_n(t'_1,t'_3;g,\omega)\|_\infty+\\
&+\|\wh S_n(t_2, t_3; \wh S_n(t_1, t_2;g,\omega),\omega)-\wh S_n(t_2, t_3; S_n(t'_1,t'_2;g,\omega),\omega)\|_\infty+\\&+ \|\wh S_n(t_2, t_3; S_n(t'_1,t'_2;g,\omega),\omega)
-S_n(t'_2, t'_3; S_n(t'_1,t'_2;g,\omega),\omega)\|_\infty+\\
&+\|S_n(t'_1,t'_3;g,\omega)-S_n(t'_2,t'_3;S_n(t'_1,t'_2;g,\omega),\omega)\|_\infty\\
&\leq 3\fr{38+6C_0}n+\|S_n(t'_1,t'_3;g,\omega)-S_n(t'_2,t'_3;S_n(t'_1,t'_2;g,\omega),\omega)\|_\infty.
}
Suppose $n$ is large enough so that $3(38+6C_0)/n<\vep/2$, then we are left to show that 
\ba{
\|S_n(t'_1,t'_3;g,\omega)-S_n(t'_2,t'_3;S_n(t'_1,t'_2;g,\omega),\omega)\|_\infty&\leq \vep/2.
}

By \eqref{sndif}, for all $g'\in \Gamma$ and $x\in \fr1n \mbb Z^2$, $i,j\in\lc 1,2,3\rc$ such that $i<j$,
\ba{
\left|S_n(t'_i,t'_j;g',\omega)(x)-\fr1n h\lp n x, n(t'_j-t'_i); \vph^n_{g'}, \gamma_{nt'_i}\omega\rp\right|<\fr{C_0}n.\label{snh}
}
Let
\al{
\lambda_g:=\vph^n_{S_n(t'_1,t'_2;g,\omega)}.
}
Then by \eqref{g2hbd}, for all $x\in \fr1n \mbb Z^2$,
\ba{
\left| \fr1n\lambda_g(nx)- S_n(t'_1, t'_2;g,\omega)(x)\right|\leq \fr4n.\label{snvph}
}
Combining \eqref{snh} and \eqref{snvph} yields
\ba{
\left|\lambda_g(nx)-  h\lp n x, n(t'_2-t'_1); \vph^n_g, \gamma_{nt'_1}\omega\rp\right|\leq 4+C_0,\;\;\;\forall x\in \fr1n \mbb Z^2.\label{hlamb}
}

For all $x\in \fr1n \mbb Z^2$,
\al{
&\left|S_n(t'_1,t'_3;g,\omega)(x)-S_n(t'_2,t'_3;S_n(t'_1,t'_2;g,\omega),\omega)(x)\right|\\
\leq &\left|S_n(t'_1,t'_3;g',\omega)(x)-\fr1n h\lp n x, n(t'_3-t'_1); \vph^n_g, \gamma_{nt'_1}\omega\rp(x)\right|+\\
&+\left|S_n(t'_2,t'_3;S_n(t'_1,t'_2;g,\omega),\omega)(x)-\fr1n h\lp n x, n(t'_3-t'_2); \lambda_g, \gamma_{nt'_2}\omega\rp(x)\right|+\\
&+\left|\fr1n h\lp n x, n(t'_3-t'_2); \lambda_g, \gamma_{nt'_2}\omega\rp(x)-\fr1n h\lp n x, n(t'_3-t'_2);  h\lp n x, n(t'_2-t'_1); \vph^n_g, \gamma_{nt'_1}\omega\rp, \gamma_{nt'_2}\omega\rp(x)\right|+\\
&+\left|\fr1n h\lp n x, n(t'_3-t'_1); \vph^n_g, \gamma_{nt'_1}\omega\rp(x)- \fr1n h\lp n x, n(t'_3-t'_2);  h\lp n x, n(t'_2-t'_1); \vph^n_g, \gamma_{nt'_1}\omega\rp, \gamma_{nt'_2}\omega\rp(x)\right|\\
&\leq \fr{2C_0}n+ \fr{4+C_0}n+0 =\fr{4+3C_0}n
}
where in the second inequality we used \eqref{snh} on the first two terms, \eqref{hlamb} and Proposition~\ref{prop:local} on the third term, and \eqref{semi} on the fourth term. We also know that $S_n(s,t;g',\omega)\in \Gamma$ for any $s,t\in \fr1n \mbb N$ and $g'\in \Gamma$, so
\al{
\|S_n(t'_1,t'_3;g,\omega)-S_n(t'_2,t'_3;S_n(t'_1,t'_2;g,\omega),\omega)\|_\infty&\leq \fr{4+3C_0+2}n= \fr{6+3C_0}n.
}
Again choosing $n$ large enough, we can make sure $(6+3C_0)/n<\vep/2$. 
\end{proof}
Lemma~\ref{f2eclosed} and Lemma~\ref{f2ebign} together imply that $\mu(E_2^\vep)=1$. Since $\vep>0$ can be arbitrarily small, we conclude that $\mu$-almost surely, $\forall t_1,t_2,t_3$ such that $0\leq t_1\leq t_2\leq t_3\leq T$, 
\al{D(S(t_1,t_3;\cdot), S(t_2,t_3;S(t_1,t_2;\cdot)))=0,}
which implies Condition~\eqref{cond2}
\al{
S(t_1,t_3;g)=S(t_2,t_3;S(t_1,t_2;g)),\;\;\;\forall g\in \Gamma.
} 

For Condition~\eqref{cond3}, define $E_3^\vep$ to be the set of $S\in \ag C'_T$ with the following property: given $x\in \mbb R^2$ and $g_1,g_2\in\Gamma$, if $g_1(y)=g_2(y)$ for all $y$ such that $|y-x|_1\leq R$, then $S(s,t;g_1)(y)=S(s,t;g_2)(y)$ for all $y$ such that $|y-x|_1\leq R-2(t-s)-\vep$ and all $s,t$ such that $0\leq s< t\leq T$. Then again since uniform convergence in $\ag C'_T$ implies pointwise convergence, the set $E_3^\vep$ is closed in $\ag C'_T$. 

To show that $\wh S_n\in E_3^\vep$ for all large enough $n$, first we notice that the function $u\mapsto S_n(s,t;g-u/n, \omega)+u/n:\mbb R\rar \Gamma$ has a period of 4 by the construction of $S_n$. Therefore
\al{
\wh S_n(s,t;g,\omega)=\fr 14\int_{u=0}^{ 4}\lb S_n\lp s,t; g-g(x)-\fr u n,\omega\rp+\fr un\rb du +g(x).
}
Now applying Lemma~\ref{lem:snlinear}, we deduce that $\wh S_n\in E_3^\vep$ as long as $\fr9n<\vep$, so $\mu(E_3^\vep)=1$. Since $\vep>0$ is arbitrary, we conclude that Condition~\eqref{cond3} holds $\mu$-almost surely.
\end{proof}

Following the same proof procedure as Proposition~\ref{prop:local} and Lemma~\ref{lem:local}, Condition~\eqref{cond3} implies the following localization property about the limit points.
\begin{lem}\label{lem:limitlocal}
Suppose $S\in \ag C'_T$ satisfies Condition~\eqref{cond3}, then given $g_1,g_2\in \Gamma$ and $x\in\mbb R^2$,
\al{
\sup_{y:|y-x|_1\leq R-2(t-s)}\left|S(s,t;g_1)(y)-S(s,t;g_2)(y)\right|&\leq \sup_{y:|y-x|_1\leq R}\left|g_1(y)-g_2(y)\right|.
}
\end{lem}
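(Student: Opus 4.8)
The plan is to replay the argument of Proposition~\ref{prop:local} (and of Lemma~\ref{lem:local}) at the abstract level, with the discrete inputs \eqref{moveup}, Lemma~\ref{lem:monotone2} and Lemma~\ref{lem:linear} replaced respectively by Property~\eqref{prop1} (shift equivariance), Property~\eqref{prop2} (monotonicity) and Condition~\eqref{cond3} (finite speed of propagation). First I would dispose of the degenerate cases: if $s=t$ then Condition~\eqref{cond1} gives $S(s,t;g_i)=g_i$ while $R-2(t-s)=R$, so the inequality is an identity; and if $R<2(t-s)$ the left-hand supremum runs over the empty set. So assume $0\le s<t\le T$ and $R\ge 2(t-s)$, and set $M:=\sup_{y:\,|y-x|_1\le R}|g_1(y)-g_2(y)|$, which is finite by \eqref{411}.

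Next I would introduce the auxiliary function $g_3:=g_1\vee(g_2+M)$, which lies in $\Gamma$ because $\Gamma$ is closed under $\vee$ (shown in the proof of Lemma~\ref{lem:local}). For $|y-x|_1\le R$ one has $g_1(y)\le g_2(y)+M$, so $g_3$ agrees with $g_2+M$ on the ball $\lc y:|y-x|_1\le R\rc$; Condition~\eqref{cond3} then gives $S(s,t;g_3)(y)=S(s,t;g_2+M)(y)$ for all $y$ with $|y-x|_1\le R-2(t-s)$. Using Property~\eqref{prop1} to pull the constant out, $S(s,t;g_2+M)=S(s,t;g_2)+M$, and using Property~\eqref{prop2} together with $g_1\le g_3$, $S(s,t;g_1)\le S(s,t;g_3)$ everywhere. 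Chaining these,
\[
S(s,t;g_1)(y)\le S(s,t;g_2)(y)+M\qquad\text{for all }y\text{ with }|y-x|_1\le R-2(t-s).
\]
Swapping the roles of $g_1$ and $g_2$ (with $g_4:=g_2\vee(g_1+M)$) gives the reverse inequality on the same set, and the two bounds combine to $|S(s,t;g_1)(y)-S(s,t;g_2)(y)|\le M$ there, which is exactly the claim.

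I do not expect a real obstacle: this is a routine transcription of the earlier localization proofs. The only points requiring attention are the bookkeeping of the trivial cases ($s=t$ and $R<2(t-s)$) and the fact — already established inside the proof of Lemma~\ref{lem:local} — that $\Gamma$ is a lattice, which is what makes $g_3$ admissible so that Condition~\eqref{cond3} may be applied to it; and note that, unlike in the discrete setting of Proposition~\ref{prop:local}, no rounding of $M$ to a multiple of $4$ is needed, since Property~\eqref{prop1} permits shifts by an arbitrary constant.
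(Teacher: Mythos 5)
Your proof is correct and is exactly the argument the paper intends: the paper merely states that the lemma ``follows the same proof procedure as Proposition~\ref{prop:local} and Lemma~\ref{lem:local}'', and your transcription — using $g_3:=g_1\vee(g_2+M)$, the lattice property of $\Gamma$, Condition~\eqref{cond3} for finite propagation, and Properties~\eqref{prop1}--\eqref{prop2} (which hold since $S(s,t;\cdot)\in\ag E^r_q$) — is precisely that procedure, with the correct observation that no rounding of $M$ to a multiple of $4$ is needed here.
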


\section{Equilibrium measures}\label{sec:em}
\subsection{Construction of Gibbs measures}\label{sec:gibbs}

We will make use of a particular family of Gibbs measures of dimer coverings in the plane. Specifically, for each slope vector $\rho:=(\rho_1,\rho_2)\in U^o$, the interior of the Newton polygon, we would like a Gibbs measure whose height function in large scale is concentrated on the slope $\rho$. One such choice is to restrict the Boltzmann measure on the toroidal graph $G_n$ to those with height change $(\lfloor n\rho_1\rfloor,\lfloor n\rho_2\rfloor)$, and take any weak limit as $n\rar \infty$. However, it is unclear how to compute the limiting local probabilities.

A different approach, following \cite{RK97,CKP01,KOS06}, is to consider the full Boltzmann measure on $G'_n$, modified from $G_n$ by gauge transformations of the edge weights while keeping the periodicity. Then conditioned on the dimer configuration outside a finite region, the measure inside the finite region is unchanged, so a limiting Gibbs measure is still a Gibbs measure of the original graph $G$. However, the absolute probability of a dimer covering on $G_n$ is modified according to its height change, with certain height changes preferred to others. Then by saddle point analysis, the limiting Gibbs measure is concentrated on a preferred slope. The advantage of this approach is that the local dimer probabilities of the Boltzmann measure on tori are computable from the relevant entries of the inverse Kasteleyn matrices, where a Kasteleyn matrix is a weighted adjacency matrix with certain choice of signs. The Kasteleyn matrices on $G'_n$ can be inverted via explicit diagonalization, and it can be shown that, as $n\rar \infty$, the inverse matrix converges along a common subsequence to a limiting matrix, which is called the infinite inverse Kasteleyn matrix. (In fact, the whole sequence converges due to the uniqueness result of Gibbs measures by Sheffield(\cite{SS03}), but this uniqueness is not needed in this paper.) We choose our Gibbs measure to be the weak limit along this subsequence, where the local dimer probabilities of the Gibbs measure are computed from the relevant entries of the infinite inverse Kasteleyn matrix in the same way as on tori.

In our specific example as well as more general examples mentioned in Section~\ref{sec:comm}, where the dimer graph and the edge weights satisfy an ``isoradiality'' condition, it is shown in \cite{BdT07} based on \cite{RK02} that the entries of the infinite inverse Kasteleyn matrix have  simple expressions based on local geometry.  As a result, explicit formula for the local dimer probabilities in the Gibbs measure can be derived.

We shall state the relevant results for our specific example. For each even face $x$,  denote the four edges of the face $x$ on the north, east, south and west side by $n_x,e_x,s_x,w_x$ respectively. The gauge transformation on $G_n$ and $G$ is such that the dimer weights are the same in every even face $x$ (this is true on the original graphs). The isoradiality condition is that the four edges can be represented by the four sides of a quadilateral with unit circumcircle as in Figure~\ref{fig:isoradial}, such that the dimer weights of $n_x,e_x,s_x,w_x$ are given by $\sin\ap, \sin\beta, \sin\gamma,\sin\delta$ respectively.
\begin{figure}[h]
\caption{}
\includegraphics[scale=1.3]{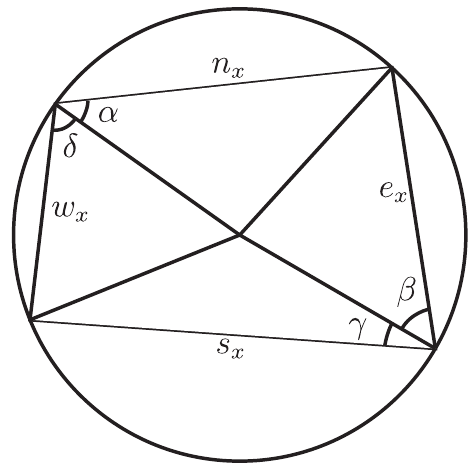}
\centering
\label{fig:isoradial}
\end{figure}

Then the Gibbs measure has the following local dimer probabilities, where $P($some edges$)$ is shorthand for the probability that the set of edges are included simultaneously,
\ba{
&P(n_x)=\fr{\ap}{\pi}, \;\;P(e_x)=\fr{\beta}{\pi}, \;\;P(s_x)=\fr{\gamma}{\pi}, \;\;P(w_x)=\fr{\delta}\pi,\\
&P(n_x, s_x)=\fr1{\pi^2}\lp \ap\gamma+\fr{\sin \ap\sin\gamma}{\sin \beta\sin\delta}\beta\delta\rp,\\
&P(e_x, w_x)=\fr1{\pi^2}\lp \beta\delta+\fr{\sin \beta\sin\delta}{\sin \ap\sin\gamma}\ap\gamma\rp=aP(n_x, s_x).
}

The slope $\rho=(\rho_1,\rho_2)$ is given by the expected height change along $x$ and $y$ directions, so
\ba{
&\rho_1=2\lp P(e_x)-P(w_x)\rp=2\fr{\beta-\delta}\pi\label{e63}\\
&\rho_2=2\lp P(s_x)-P(n_x) \rp=2\fr{\gamma-\ap}\pi.\label{e64}
}
Also, it can be easily checked that, this new set of weights being a gauge transformation of the original graph is equivalent to that
\ba{
\fr{\sin\beta\sin\delta}{\sin\ap\sin\gamma}=a,\label{e65}
}
which also guarantees that the shuffling dynamics is identical with the new weights. Then we can compute $\ap, \beta,\gamma, \delta$ as functions of $\rho$, similar to \cite{CKP01}. Using $\ap+\beta+\gamma+\delta=\pi$, we can rewrite \eqref{e65} as
\al{
\cos(\beta-\delta)-\cos(\beta+\delta)=a\lp\cos(\ap-\gamma)+\cos(\beta+\delta)\rp,
}
and plugging in \eqref{e63} and \eqref{e64} to get
\al{
\fr{1}{1+a}\cos\lp\fr{\pi \rho_1}2\rp-\fr{a}{1+a}\cos\lp\fr{\pi \rho_2}2\rp=\cos\lp\beta+\delta\rp.
}
Denoting the LHS by $M$ and combining this with \eqref{e63}, we get
\al{
\beta&=\fr{\pi\rho_1}4+\fr12\cos^{-1}\lp M\rp, \;\;\delta=-\fr{\pi\rho_1}4+\fr12\cos^{-1}\lp M\rp.
}
Similarly, we obtain that
\al{
\ap&=-\fr{\pi\rho_2}4+\fr12\cos^{-1}\lp-M\rp,\;\;\gamma=\fr{\pi\rho_2}4+\fr12\cos^{-1}\lp-M\rp.
}

We shall verify that the slope of the Gibbs measure does concentrate on $\rho$. Let $\Omega_\rho$, where $\rho\in U^o$,  denote the probability space of height functions $h(\cdot)$ distributed according to the Gibbs measure $\pi_\rho$ constructed above with slope $\rho$, and shifted vertically so that $h(0)=0$ always holds.

\begin{lem}\label{lem:flat}
Suppose $h(\cdot)$ is given by $\Omega_\rho$. For all $R>0$,
\al{
\lim_{n\rar \infty} \mbb{E} \sup_{|x|_1\leq R}\left| \fr1nh(\lfloor nx\rfloor)-x\cdot \rho\right|=0.
}
\end{lem}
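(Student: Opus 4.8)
The plan is to deduce the uniform estimate \eqref{init}-type statement from the single-point estimate $\mbb E\bigl|\fr1n h(\lfloor nx\rfloor)-x\cdot\rho\bigr|\to 0$ for each fixed $x$, together with the observation that the random functions $x\mapsto\fr1n h(\lfloor nx\rfloor)$ admit a modulus of continuity that is uniform in $n$ and in the sample.

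\emph{Single-point estimate.} Write $D_n(x):=\fr1n h(\lfloor nx\rfloor)-x\cdot\rho$ and bound $\mbb E|D_n(x)|\le\bigl(\mbb E D_n(x)^2\bigr)^{1/2}=\bigl(\mathrm{Var}(D_n(x))+(\mbb E D_n(x))^2\bigr)^{1/2}$. For the mean, the Gibbs measure $\pi_\rho$ constructed in Section~\ref{sec:gibbs} is invariant under the translation action $T$, so $y\mapsto\mbb E[h(y)]$ differs from a linear function by a bounded $T$-periodic term, and by the slope computation \eqref{e63}--\eqref{e64} its linear part is $\rho$; since $h(0)=0$ this gives $\mbb E[h(\lfloor nx\rfloor)]=n\,x\cdot\rho+O(1)$ uniformly for $|x|_1\le R$, i.e. $\mbb E D_n(x)=O(1/n)$. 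For the variance, fix a lattice path from $0$ to $\lfloor nx\rfloor$ of length $m=|\lfloor nx\rfloor|_1=O(n)$, say the $L$-shaped one, with crossed edges $e_1,\dots,e_m$. Crossing an edge changes $h$ by an affine function of $\mathbbm{1}_{e\in M}$ with $\mathbbm{1}_{e\in M}$-coefficient $\pm 4$, so
\[
h(\lfloor nx\rfloor)-\mbb E\,h(\lfloor nx\rfloor)=\sum_{j=1}^m c_j\bigl(\mathbbm{1}_{e_j\in M}-P(e_j)\bigr),\qquad|c_j|\le 4,
\]
and hence
\[
\mathrm{Var}\bigl(h(\lfloor nx\rfloor)\bigr)\le 16\sum_{j,k=1}^m\bigl|\mathrm{Cov}(\mathbbm{1}_{e_j\in M},\mathbbm{1}_{e_k\in M})\bigr|.
\]
Since $\pi_\rho$ is determinantal with correlations given by minors of the infinite inverse Kasteleyn matrix, whose entries are $O\bigl(1/(1+\mathrm{dist})\bigr)$ in the isoradial setting (\cite{RK02,BdT07}), each covariance is $O\bigl((1+\mathrm{dist}(e_j,e_k))^{-2}\bigr)$; along the chosen path $\mathrm{dist}(e_j,e_k)$ is comparable to $|j-k|$, so the double sum is $O(m)=O(n)$. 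Therefore $\mathrm{Var}(D_n(x))=n^{-2}\,\mathrm{Var}(h(\lfloor nx\rfloor))=O(1/n)$, and combining the two contributions, $\mbb E|D_n(x)|=O(n^{-1/2})\to 0$.

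\emph{From points to the supremum.} A domino height function changes by at most $3$ across each edge, so $|h(y)-h(y')|\le 3|y-y'|_1$; hence $|D_n(x)-D_n(y)|\le C|x-y|_1+C/n$ for all $x,y$, with $C$ absolute. Fix $\delta>0$ and a finite $\delta$-net $x_1,\dots,x_N$ of $\{|x|_1\le R\}$ in the $\ell_1$ metric. Then $\sup_{|x|_1\le R}|D_n(x)|\le\max_{1\le i\le N}|D_n(x_i)|+C\delta+C/n$, so
\[
\mbb E\sup_{|x|_1\le R}|D_n(x)|\le\sum_{i=1}^N\mbb E|D_n(x_i)|+C\delta+\fr{C}{n}.
\]
Letting $n\to\infty$ (the sum has finitely many terms, each $O(n^{-1/2})$) and then $\delta\to 0$ yields the lemma.

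I expect the only non-routine input to be the variance bound, i.e. the polynomial decay of two-point correlations of $\pi_\rho$; this is precisely where the determinantal/isoradial description of $\pi_\rho$ from Section~\ref{sec:gibbs} is used, and in fact any polynomial decay rate already makes the variance $o(n^2)$, so the explicit $1/\mathrm{dist}$ decay of the inverse Kasteleyn matrix is comfortably sufficient. The linear growth of the mean, the deterministic Lipschitz bound, and the net argument are all soft. (An alternative to the second-moment estimate would be to invoke ergodicity of $\pi_\rho$ under $\mbb Z^2$-translations, but the route through the inverse Kasteleyn matrix fits the machinery already developed.)
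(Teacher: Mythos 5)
Correct, and essentially the same approach as the paper's proof: a pointwise concentration estimate for $h(\lfloor nx\rfloor)$ around $\lfloor nx\rfloor\cdot\rho$, combined with a finite net of $\{|x|_1\le R\}$ and the deterministic Lipschitz property of domino height functions to upgrade to the supremum, with boundedness handling the passage to expectation. The only difference is the source of the pointwise bound: you derive a variance bound $O(n)$ from the path decomposition of $h$ into edge indicators and the quadratic decay of dimer covariances, whereas the paper simply quotes the sharper $O(\log|x|)$ variance bound from \cite{KOS06} and applies Chebyshev; both are more than sufficient since anything $o(n^2)$ works.
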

\begin{proof}
As a sanity check, notice that $\mbb E(h(\lfloor nx\rfloor))-\lfloor nx\rfloor\cdot \rho$ is bounded. By  \cite{KOS06}, the variance of $h(x)$ is $O(\log |x|)$. Therefore, given some small $\vep>0$, by Chebyshev's inequality,
\ba{
P\lp \left|h(\lfloor nx\rfloor)-\lfloor nx\rfloor\cdot \rho\right|>n\vep\rp\leq\fr {C_1\log^2|\lfloor nx\rfloor|}{n^2\vep^2}\label{ptlogbd}
}
for all large enough $n$ and some constant $C_1$ that depends on $\rho$ only. 

Pick a set of points $A_\vep\subset \lc x:|x|_1\leq R\rc$ with cardinality $O(\vep^{-2}R^2)$ such that every point $x$ with $|x|_1\leq R$ is within $\vep$ $\ell_1$-distance from at least one point in $A_\vep$. Let $M_\vep^n$ denote the event such that $|h(\lfloor nx\rfloor)-\lfloor nx\rfloor\cdot \rho|\leq n\vep$ for all $x\in A_\vep$. Then by \eqref{ptlogbd},
\ba{
P(M_\vep^n)\geq 1- \sum_{x\in A_\vep}\fr {C_1\log^2|\lfloor nx\rfloor|}{n^2\vep^2}.\label{mneprbd}
}
This is a finite sum, and when $n\rar \infty$, $n^2$ clearly outgrows $\log^2|\lfloor nx\rfloor|$, so $P(M^n_\vep)\rar 1$.

Since heights at neighboring faces differ by at most 3, if $h\in M^n_\vep$, 
\al{
\sup_{|x|_1\leq R}\left| h(\lfloor nx\rfloor)-nx\cdot \rho\right|&\leq C_2n\vep+\rho\\
\Rightarrow\sup_{|x|_1\leq R}\left| \fr1nh(\lfloor nx\rfloor)-x\cdot \rho\right|&\leq C_2\vep+\fr\rho n\label{2dbound}\numberthis
}
for some constant $C_2$ that depends on $\rho$ only.

On the other hand, $\sup_{|x|_1\leq R}\left| \fr1nh(\lfloor nx\rfloor)-x\cdot \rho\right|$ is at most $|3+\rho|R$. Combining this with \eqref{mneprbd} and \eqref{2dbound}, we see that
\al{
\limsup_{n\rar \infty} \mbb{E} \sup_{|x|_1\leq R}\left| \fr1nh(\lfloor nx\rfloor)-x\cdot \rho\right|&\leq C_2\vep.
}
Now taking $\vep\rar 0$, we obtain the statement.
\end{proof}

\subsection{Evolution at equilibrium}\label{sec:pipini}

Now we shall relate the Gibbs measures in the previous section to shuffling dynamics. The first observation is the following.
\begin{prop}
The Gibbs measure $\pi_\rho$, $\rho\in U^o$, is invariant under shuffling.
\end{prop}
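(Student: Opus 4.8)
The plan is to realise $\pi_\rho$ as a weak limit (convergence of the joint distribution of every finite set of edges, i.e.\ of every cylinder probability) of the Boltzmann measures $\mu_{n}$ on the gauge-modified toroidal graphs $G'_{n}$ of Section~\ref{sec:gibbs}, along the subsequence $(n_k)$ on which the inverse Kasteleyn matrices converge, and then to push the shuffle-invariance through this limit. Write $\Psi$ for the shuffle regarded as a Markov kernel on dimer coverings of $G$ (well defined by the discussion of Section~\ref{sec:setup}), and $\Psi_n$ for the corresponding kernel on $G'_n$. ``Invariant under shuffling'' is read as $\pi_\rho\Psi=\pi_\rho$; the vertical shift of the height by $2$ after each shuffle is irrelevant here, since $\pi_\rho$ is a measure on coverings and $\Psi$, as a map on coverings, does not see it. The three steps are: (i) $\mu_n\Psi_n=\mu_n$ for every $n$; (ii) $\Psi$ has finite range, so that $\Psi_n$ and $\Psi$ act identically on any prescribed finite region once $n$ is large; (iii) combine.

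For (i): by Section~\ref{sec:example}, a shuffle is a finite composition of global operations, each a family of pairwise non-overlapping local moves, and the whole composition returns the torus to the identical embedded graph with the same edge weights up to gauge. This is checked for $G_n$ in Section~\ref{sec:example}; passing to $G'_n$ only rescales the weights along two homology cuts, which may be routed away from any prescribed finite region — moving a cut past a vertex is a gauge transformation, and both the Boltzmann measure and the spider-move transition probability are gauge invariant, since $\frac{AC}{AC+BD}=\frac{ac}{ac+bd}$ is unchanged by rescaling the edges at a single vertex. Hence the shuffle returns $G'_n$ to a gauge-equivalent copy of itself, and Proposition~\ref{prop:loc}, applied to the local moves in turn together with the gauge invariance of the Boltzmann measure, gives $\mu_n\Psi_n=\mu_n$. (An alternative, avoiding the magnetic bookkeeping: the shuffle preserves the height change of a covering on the torus — slide the measuring cycle off the edges being moved, as in Figure~\ref{fig:localheight} — hence preserves each height-change sector; within a sector $\mu_{G'_n}$ and $\mu_{G_n}$ coincide after normalisation, so each sectorial conditional of $\mu_{G_n}$, hence of $\mu_{G'_n}$, is $\Psi$-invariant, and summing over sectors yields $\mu_n\Psi_n=\mu_n$.)

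For (ii) and (iii): exactly as in the proof of Lemma~\ref{lem:linear}, at the dimer level the restriction of $\Psi(M)$ to an edge set $A$ is a random function of the restriction of $M$, and of the Bernoulli marks, to the enlargement $A^{(2)}$ obtained by adjoining everything within $\ell_1$-distance $2$ of $A$; once $n_k$ is large enough that $A^{(2)}$ embeds in $G'_{n_k}$ without wrap-around and the cuts defining $G'_{n_k}$ avoid $A^{(2)}$, the kernels $\Psi$ and $\Psi_{n_k}$ induce the same conditional law on $A$. Hence for any bounded cylinder function $f$ of the covering, $\Psi f$ is again a bounded cylinder function, and $\Psi_{n_k}f=\Psi f$ for all large $k$. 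Therefore
\begin{align*}
\int f\,d(\pi_\rho\Psi)&=\int \Psi f\,d\pi_\rho=\lim_{k\to\infty}\int \Psi f\,d\mu_{n_k}=\lim_{k\to\infty}\int \Psi_{n_k}f\,d\mu_{n_k}\\
&=\lim_{k\to\infty}\int f\,d(\mu_{n_k}\Psi_{n_k})=\lim_{k\to\infty}\int f\,d\mu_{n_k}=\int f\,d\pi_\rho,
\end{align*}
using in the third and last equalities that integrating a bounded cylinder function against a measure is a finite linear combination of cylinder probabilities, which converge along $(n_k)$. Since cylinder functions separate measures on coverings, $\pi_\rho\Psi=\pi_\rho$ for each $\rho\in U^o$.

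The step I expect to require the most care is (i): one must be certain that performing the shuffle on the magnetically twisted graph $G'_n$ returns a graph gauge-equivalent to $G'_n$ — equivalently, that the shuffle commutes with the magnetic twist up to gauge — and the height-change-sector decomposition is the cleanest way around this. The remaining ingredients, namely finite propagation and the convergence of $\mu_{n_k}$ to $\pi_\rho$ in the sense of cylinder probabilities, are then routine.
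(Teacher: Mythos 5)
Your proof follows essentially the same route as the paper's: invariance of the toroidal Boltzmann measures under the shuffle via Proposition~\ref{prop:loc} plus gauge considerations, finite propagation (Lemma~\ref{lem:linear}) to identify the action of the shuffle on a fixed finite region on large tori and in the plane, and passage to the weak limit along the subsequence defining $\pi_\rho$. Your step (i) is more careful than the paper's one-line remark about the modified weights on $G'_n$ — the paper simply invokes that the new weights are gauge-equivalent and "the shuffling dynamics is identical with the new weights" (Section~\ref{sec:gibbs}) — but the argument is the same in substance and correct.
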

\begin{proof}
This is the direct consequence of Proposition~\ref{prop:loc}. The Gibbs measure was constructed as the weak limit of a sequence of Boltzmann measures on tori with increasing sizes. Since the local moves preserve the Boltzmann measures on tori, the shuffling procedure, viewed as a sequence of local moves, also preserves the Boltzmann measures on tori. Also since the edge weights of the new graph are the same as the original graph up to gauge transformation, the shuffling procedure in fact results in the same exact Boltzmann measure as before the shuffling.

By Lemma~\ref{lem:linear} (and its version on tori), after a shuffle, the resulting  configuration on a finite region $V$ is a random function of the original configuration on the region \al{
V'=\lc x: x\text{ is within a constant }\ell_1\text{-distance of }V\rc,}where the randomness only comes from the Bernoulli random variables that govern the outcomes of the spider moves on $V'$. For each finite region, such function and the Bernoulli random variables are the same for the shuffles on all large enough tori and the infinite plane. Due to weak convergence, the measures of cylindrical sets on $V'$ and $V$ on tori converge to those in the Gibbs measure. Therefore when we perform the shuffling to the Gibbs measure in the plane, the resulting distribution on $V$ also remains the same. This is true for all $V$, and since $\pi_\rho$ is characterized by distribution on all finite regions, we conclude that $\pi_\rho$ is invariant under shuffling.
\end{proof}

The invariance of Gibbs measures will let us find the hydrodynamic limit when the height is initially distributed according to a Gibbs measure. To do so, we first investigate how the height evolution at the origin depends on the local dimer configuration. From Table~\ref{tab:1}, it is easy to see the following rules about the height at an  even face $x$  after a shuffle:
\begin{enumerate}
\item{The height remains the same when either  $s_x$ or $e_x$ is present,}
\item{The height decreases by 4 when either  $n_x$ or $s_x$ is present,}
\item{When none of  $n_x,e_x,s_x,w_x$ is present, with probability $\fr a{1+a}$ the height remains the same, and with probability $\fr 1{1+a}$ the height decreases by 4.}
\end{enumerate}

Consider the shuffling height process with initial configuration given by $\Omega_\rho$. The whole process lives on the product probability space $\Omega_\rho\times \Omega$. Define the event $Q(x,t)$ for $t\in \mbb N$ as
\al{
Q(x,t):=&\{\text{At time $t$, the even face $x$ has either $n_x$ or $s_x$ present, or,} \\
&\text{has none of $n_x, e_x, s_x, w_x$ present and $\omega(x,t)=1$}\}.
}
Recall that $\omega(x,t)=1$ with probability $\fr 1{1+a}$. We can use the information gathered in Section~\ref{sec:gibbs} and the invariance of the Gibbs measure under shuffling to compute explictly $P(Q(x,t))$:
\al{
P(Q(x,t))=&P_{\pi_\rho}(n_x)+P_{\pi_\rho}(s_x)- P_{\pi_\rho}(n_x, s_x)+\fr 1{1+a}(1-P_{\pi_\rho}(e_x)-P_{\pi_\rho}(w_x)+ P_{\pi_\rho}(e_x,w_x)\\
&-P_{\pi_\rho}(n_x)-P_{\pi_\rho}(s_x)+ P_{\pi_\rho}(n_x, s_x))\\
=&P_{\pi_\rho}(n_x)+P_{\pi_\rho}(s_x)- \fr a{1+a}P_{\pi_\rho}(n_x, s_x)+\fr 1{1+a}P_{\pi_\rho}(e_x, w_x)\\
=&P_{\pi_\rho}(n_x)+P_{\pi_\rho}(s_x)\\
=&\fr1{\pi}\cos^{-1}\lp \fr{a}{1+a}\cos\lp\fr{\pi \rho_2}2\rp - \fr{1}{1+a}\cos\lp\fr{\pi \rho_1}2\rp\rp.
}

The result is a function of $\rho$ only. We define $H(\rho):=4P(Q(x,t))$. 

Now we are ready to prove a law of large numbers for the height evolution at the origin. Finer fluctuation results could be obtained as in \cite{CT18}.
\begin{lem}\label{lem:originspeed}
Suppose the height process $h(x,t)$ has an initial condition $h(\cdot, 0)$ given by $\Omega_\rho$, $\rho\in U^o$, then
\al{\lim_{t\rar \infty}\mbb{E}\left|\fr1th(0,t)+H(\rho)\right|=0.
}
\end{lem}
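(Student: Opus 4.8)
The plan is to read $h(0,t)$ as an additive functional of a stationary dynamical system, apply Birkhoff's ergodic theorem, and then use the $2$-spatial-Lipschitz constraint together with the \emph{spatial} ergodicity of $\pi_\rho$ to identify the ergodic average; no quantitative decorrelation in time will be needed. First, since configurations drawn from $\Omega_\rho$ satisfy $h(0)=0$ and, by the update rules stated just before the lemma, the height at an even face either stays put or decreases by $4$ during a shuffle --- the decrease occurring exactly on the event $Q(0,s)$ --- we have
\[
\fr1t h(0,t)=-\fr4t\sum_{s=0}^{t-1}\mathbf 1_{Q(0,s)},
\]
and more generally $h(2z,t)-h(2z,0)=-4\sum_{s=0}^{t-1}\mathbf 1_{Q(2z,s)}$ for every $z\in\mbb Z^2$.

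Next I would pass to the product space $\Omega_\rho\times\Omega$ with measure $\pi_\rho\otimes\mbb P$ and the time-shift $\theta(\varphi,\omega):=(h(\cdot,1;\varphi,\omega),\gamma_1\omega)$; using the semigroup relation \eqref{semi} one gets $\theta^s(\varphi,\omega)=(h(\cdot,s;\varphi,\omega),\gamma_s\omega)$, so that $\mathbf 1_{Q(x,s)}=\mathbf 1_{Q(x,0)}\circ\theta^s$. Invariance of $\pi_\rho$ under shuffling and the i.i.d.\ structure of $\omega$ make $\theta$ measure-preserving. Birkhoff's ergodic theorem, applied to the bounded function $\mathbf 1_{Q(0,0)}$, then gives, almost surely and in $L^1$,
\[
\fr1t h(0,t)\ \rar\ Y:=-4\,\mbb E\big[\,\mathbf 1_{Q(0,0)}\mid\mcl I_\theta\,\big],
\]
and likewise $\fr1t\big(h(2z,t)-h(2z,0)\big)\rar Y_z:=-4\,\mbb E[\,\mathbf 1_{Q(2z,0)}\mid\mcl I_\theta\,]$, where $\mcl I_\theta$ is the $\theta$-invariant $\sigma$-algebra.

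The main point is to show that $Y$ is the deterministic constant $-H(\rho)$. On one hand, because $h(\cdot,t)$ and $h(\cdot,0)$ both lie in $\Phi$, the height difference between the faces $2z$ and $0$ is bounded by a constant multiple of $|z|_1$, uniformly in $t$ and pathwise; hence $\fr1t\big(h(2z,t)-h(0,t)\big)\rar0$ and $\fr1t h(2z,0)\rar0$, so $Y_z=Y$ almost surely for every $z$. On the other hand the lattice translation $\sigma_z$ by $(2z_1,2z_2)$ preserves $\pi_\rho\otimes\mbb P$ and commutes with $\theta$, so $\mcl I_\theta$ is $\sigma_z$-invariant and $Y_z=Y\circ\sigma_z$; combining the two, $Y\circ\sigma_z=Y$ almost surely for all $z$, i.e.\ $Y$ is invariant under the $\mbb Z^2$-translation action. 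Since $\pi_\rho$ is ergodic under this action (for instance from the decay of its dimer--dimer correlations, cf.\ \cite{KOS06}; only ergodicity, not uniqueness of Gibbs measures, is used) and $\omega$ is i.i.d., the product $\pi_\rho\otimes\mbb P$ is spatially ergodic, so $Y$ equals the constant $\mbb E[Y]=-4\,\mbb E[\mathbf 1_{Q(0,0)}]=-4\,P(Q(0,0))=-H(\rho)$ by the computation of $P(Q(x,t))$ in Section~\ref{sec:gibbs}.

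Together with the $L^1$ convergence from Birkhoff's theorem, this yields $\mbb E\left|\fr1t h(0,t)+H(\rho)\right|\rar0$, as claimed. I expect the delicate step to be precisely the passage in the third paragraph from the \emph{temporal} ergodic average to a \emph{spatially} translation-invariant limit: the $2$-Lipschitz bound is what forces the per-face limits $Y_z$ to coincide, and the spatial ergodicity of $\pi_\rho$ is then what collapses the invariant limit to a constant. Everything else is bookkeeping with the already-established invariance, monotonicity, and finite-speed-of-propagation properties.
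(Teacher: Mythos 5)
Your argument is correct, but it takes a genuinely different route from the paper. The paper's proof is quantitative: it writes $\mbb E(h(2x,t)-h(2x,0))=-tH(\rho)$, derives a recursive variance bound $V_R(t)=O(t^2R)$ for the sum of increments over the sphere $|x|_1=R$ using the quadratic decay of correlations of $Q(\cdot,t)$ at each fixed time (from \cite{KOS06}) plus Cauchy--Schwarz, and then argues by contradiction: a deviation of $\fr1t h(0,t)$ of size $\vep$ propagates, via the $2$-Lipschitz property and the concentration of the initial profile (Lemma~\ref{lem:flat}), to a simultaneous deviation at all $\Omega(\vep t)$ faces on a sphere of radius $\lfloor \vep t/20\rfloor$, which Chebyshev rules out. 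Your proof replaces all of this with Birkhoff's theorem for the time shift $\theta$ on $\Omega_\rho\times\Omega$ plus spatial ergodicity to pin the invariant limit down to a constant; the Lipschitz bound plays the same role in both proofs (forcing neighboring faces to share the same asymptotic speed), but you use correlation decay only qualitatively (to get ergodicity/mixing of $\pi_\rho$) where the paper uses it quantitatively. Your route is softer and yields almost-sure convergence as a bonus, but gives no rate; the paper's computation is what one would need to pursue the finer fluctuation results it alludes to.

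Three points in your write-up deserve to be nailed down. First, the state space for $\theta$: configurations in $\Omega_\rho$ are normalized by $h(0)=0$, which is destroyed after one shuffle, so $\theta$ should act on dimer configurations (equivalently, heights modulo additive constants); this is harmless since $Q(x,t)$ depends only on the local dimer configuration. Second, measure preservation of $\theta$ on the \emph{product} requires not just invariance of $\pi_\rho$ but also that $h(\cdot,1;\vph,\omega)$ depends only on $(\vph,\omega(\cdot,0))$ and is therefore independent of $\gamma_1\omega$ --- worth one sentence. Third, ergodicity of $\pi_\rho\otimes\mbb P$ under $2\mbb Z^2$-translations needs a word on why the product is ergodic (products of ergodic systems need not be); here it suffices that $\pi_\rho$ is mixing, which does follow from the polynomial decay of local correlations that the paper itself imports from \cite{KOS06}, and that $\mbb P$ is i.i.d. With these clarifications your proof is complete.
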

\begin{proof}
Since $\fr1t h(0,t)$ is bounded between $0$ and $4$, to prove the lemma, it suffices to show that for every $\vep>0$,
\ba{
\lim_{t\rar \infty} P\lp \left|\fr1th(0,t)+H(\rho)\right|>\vep\rp=0.\label{want63}
}
By the local rules above, we know that for every even face $x$, and $t\in \mbb N$, \ba{
\mbb E(h(x,t)-h(x,0))=\mbb E\lp-4\sum_{i=1}^t \mbf 1_{Q(x,t)}(h)\rp=-tH(\rho).\label{632}
} In particular, $\mbb E(h(0,t))=-tH(\rho)$. The intuition is that, if $\left|\fr1th(0,t)+H(\rho)\right|>\vep$, then faces near origin also have large deviation simultaneously. Since at each $t$, the correlation of $Q(x,t)$ for different $x$  decays at least quadratically in distance by \cite{KOS06}, this event is unlikely to happen.

For convenience, we will work on faces $2x$ where $x\in \mbb Z^2$, i.e. $T$-translations of the origin. They have the extra benefit that $\mbb E_{\Omega_\rho}(h(2x))=2x\cdot \rho$ exactly (otherwise there is a bounded error). For $R\in \mbb N$, let \al{V_R(t)&:=\var\lp \sum_{|x|_1= R} (h(2x,t)-h(2x,0)) \rp, \wt V_R(t):=\var\lp\sum_{|x|_1= R} \mbf 1_{Q(2x,t)}(h)\rp,} then by the local rules
\al{
V_R(t+1)=&\var\lp \sum_{|x|_1= R} (h(2x,t)-h(2x,0))-4\sum_{|x|_1= R} \mbf 1_{Q(2x,t)}(h)\rp\\
=& V_R(t)-8\cov\lp\sum_{|x|_1= R} (h(2x,t)-h(2x,0)), \sum_{|x|_1= R} \mbf 1_{Q(2x,t)}(h)\rp+16\wt V_R(t)
}
Using Cauchy-Schwarz inequality, the covariance is bounded in absolute value by $\sqrt{V_R(t)\wt V_R(t)}$. As mentioned above, $|\cov(Q(x,t),Q(y,t))|=O(|x-y|^2_1)$, so $\wt V_R(t)=O(R)$. Therefore $V_R(t)$ satisfies the recurrence inequality
\al{
|V_R(t+1)-V_R(t)|\leq a\sqrt{R}\sqrt{V_R(t)}+bR
}
for some constants $a,b$, which implies $V_R(t)=O(t^2R)$.

From now on we let $R=R(\vep, t):=\lfloor\vep t/20\rfloor$, then $V_R(t)=O(\vep t^3)$. By Chebyshev's inequaility, 
\ba{
P\lp\left|\sum_{|x|_1=R} (h(2x,t)-h(2x,0))-\mbb E\sum_{|x|_1= R} (h(2x,t)-h(2x,0))\right|\geq C\vep^2t^2\rp=\fr{C'}{\vep^3t}\label{633}
}
which goes to 0 as $t\rar \infty$.

By Lemma~\ref{lem:flat}, given $\delta>0$, for all large enough $t$,
\ba{
P\lp\sup_{|x|_1= R}\left|\fr1th(2x,0)-\mbb E\lp\fr1th(2x,0)\rp\right|\leq\fr \vep{10}\rp >1-\delta.\label{634}
}
If $\fr1th(0,t)+H(\rho)>\vep$, since height functions restricted to even faces are 2-spatially-Lipschitz,  we will have, for all $x$ such that $|x|_1=R$, \al{\fr1th(2x,t)-\mbb E\lp \fr1th(2x,t)\rp&\geq \fr1th(0,t)+H(\rho)-\left|\fr1th(2x,t)-\fr1th(0,t)\right|-\left|\mbb E\lp \fr1th(2x,t)-\fr1th(0,t)\rp\right|\\
&\geq\vep-2\fr \vep{10}-2\fr \vep{ 10}=\fr35\vep.}  If this event along with the event in \eqref{634} both happen, then the event in \eqref{633} happens, because in this case for all $x$ such that $|x|_1=R$ (which has cardinality $\Omega(\vep t)$), 
\al{
 (h(2x,t)-h(2x,0))-\mbb E (h(2x,t)-h(2x,0))\geq t\lp\fr35\vep-\fr1{10}\vep\rp=\fr{\vep t}{2}.
}
Therefore 
\al{\lim_{t\rar \infty}P\lp\fr1th(0,t)+H(\rho)>\vep\rp\leq \delta.}
Similarly, we can deduce that
\al{
\lim_{t\rar \infty}P\lp\fr1th(0,t)+H(\rho)<-\vep\rp\leq \delta.
}
Taking $\delta\rar 0$, we proved \eqref{want63}.
\end{proof}

\begin{prop}\label{prop:futureflat}
With the same assumption as Lemma~\ref{lem:originspeed}, for all $R>0$ and $t>0$, 
\al{
\lim_{n\rar \infty} \mbb{E}\sup_{|x|_1\leq R}\left| \fr1nh(\lfloor nx\rfloor,\lfloor nt\rfloor)-x\cdot \rho+tH(\rho)\right|=0.
}
\end{prop}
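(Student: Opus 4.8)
The plan is to deduce this from the single-site law of large numbers (Lemma~\ref{lem:originspeed}), the initial slope concentration (Lemma~\ref{lem:flat}), and the $T$-invariance of the pair $(\pi_\rho,\omega)$, together with the fact that domino height functions are $2$-spatially-Lipschitz on $2\mbb Z^2$. Fix $R>0$ and $t>0$. For each $x$ with $|x|_1\le R$ I would choose an even face $z_n(x)\in 2\mbb Z^2$ with $|z_n(x)-\lfloor nx\rfloor|_\infty\le 1$, so that $|h(\lfloor nx\rfloor,\lfloor nt\rfloor)-h(z_n(x),\lfloor nt\rfloor)|\le 6$ (the two faces are at $\ell_1$-distance at most $2$, and crossing an edge changes the height by at most $3$). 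Then decompose
\al{
\fr1n h(\lfloor nx\rfloor,\lfloor nt\rfloor)-x\cdot\rho+tH(\rho)=A_n(x)+B_n(x)+C_n(x),
}
where $A_n(x):=\fr1n[h(\lfloor nx\rfloor,\lfloor nt\rfloor)-h(z_n(x),\lfloor nt\rfloor)]$, $B_n(x):=\fr1n[h(z_n(x),\lfloor nt\rfloor)-h(z_n(x),0)]+tH(\rho)$ and $C_n(x):=\fr1n h(z_n(x),0)-x\cdot\rho$. Then $|A_n(x)|\le 6/n$ deterministically, and $\sup_{|x|_1\le R}|C_n(x)|\le\sup_{|x|_1\le R}\big|\fr1n h(\lfloor nx\rfloor,0)-x\cdot\rho\big|+O(1/n)$ since $|z_n(x)-\lfloor nx\rfloor|_\infty\le 1$, so $\mbb E\sup_{|x|_1\le R}|C_n(x)|\to 0$ by Lemma~\ref{lem:flat}.

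The crux is $B_n$, and the key step is to show that for every $z\in 2\mbb Z^2$ and every $s\in\mbb N$ the increment $h(z,s)-h(z,0)$ has the same law as $h(0,s)$. For this I would use the space-translation covariance of the deterministic process, which for $z\in 2\mbb Z^2$ gives $h(z,s;\vph,\omega)=h(0,s;\tau_{-z}\vph,\tau_{-z}\omega)$; since $\tau_{-z}\vph$ takes at the origin the value $\vph(z)$, which is a multiple of $4$ because $z$ is an even face, \eqref{moveup} yields
\al{
h(z,s;\vph,\omega)-h(z,0;\vph,\omega)=h\big(0,s;\,\tau_{-z}\vph-\vph(z),\,\tau_{-z}\omega\big).
}
Under $\pi_\rho$ the function $\tau_{-z}\vph-\vph(z)$ is the recentred height function of the dimer configuration of $\vph$ translated by the element $z$ of $T$, so it again has law $\Omega_\rho$, while $\tau_{-z}\omega$ is again an i.i.d.\ Bernoulli field independent of it; hence the right-hand side has the law of $h(0,s;\vph,\omega)=h(0,s)$. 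Consequently $\mbb E|B_n(x)|=\mbb E\big|\fr1n h(0,\lfloor nt\rfloor)+tH(\rho)\big|$ for every $x$, and decomposing
\al{
\fr1n h(0,\lfloor nt\rfloor)+tH(\rho)=\fr{\lfloor nt\rfloor}{n}\left(\fr1{\lfloor nt\rfloor}h(0,\lfloor nt\rfloor)+H(\rho)\right)+\left(t-\fr{\lfloor nt\rfloor}{n}\right)H(\rho)
}
and invoking Lemma~\ref{lem:originspeed} (applicable since $\lfloor nt\rfloor\to\infty$) shows this tends to $0$, uniformly in $x$.

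To turn this pointwise bound into a uniform one, I would note that because $h(\cdot,0)$ and $h(\cdot,\lfloor nt\rfloor)$ are $2$-spatially-Lipschitz on $2\mbb Z^2$, the map $x\mapsto B_n(x)$ is $4$-Lipschitz up to an additive $O(1/n)$ error coming from the rounding $x\mapsto z_n(x)$. Given $\vep>0$, choosing a finite $\vep$-net $A_\vep$ of $\{|x|_1\le R\}$ with $|A_\vep|=O(\vep^{-2}R^2)$ points gives $\sup_{|x|_1\le R}|B_n(x)|\le\max_{x'\in A_\vep}|B_n(x')|+4\vep+O(1/n)$, hence
\al{
\mbb E\sup_{|x|_1\le R}|B_n(x)|\le|A_\vep|\cdot\mbb E\left|\fr1n h(0,\lfloor nt\rfloor)+tH(\rho)\right|+4\vep+O(1/n).
}
Letting $n\to\infty$ and then $\vep\to 0$ forces $\mbb E\sup_{|x|_1\le R}|B_n(x)|\to 0$, and combining the three estimates proves the proposition.

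I expect the main obstacle to be the increment identity $h(z,s)-h(z,0)\stackrel{d}{=}h(0,s)$: making it precise requires combining the deterministic space-translation covariance of the process, the $T$-periodicity (not full translation invariance) of $\pi_\rho$, and careful bookkeeping of the integer constants by which the height functions of distinct even faces differ, all while keeping the coupling with the Bernoulli field $\omega$ consistent. Once that identity is established, the Lipschitz/net argument that passes from pointwise to uniform convergence is routine.
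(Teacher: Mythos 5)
Your proof is correct, but it takes a genuinely different route from the paper's. The paper uses a two-term decomposition: it writes the error as $\lb \fr1n\lp h(\lfloor nx\rfloor,\lfloor nt\rfloor)-h(0,\lfloor nt\rfloor)\rp-x\cdot\rho\rb+\lb\fr1nh(0,\lfloor nt\rfloor)+tH(\rho)\rb$, controls the second (a single random scalar, so no supremum issue) by Lemma~\ref{lem:originspeed}, and controls the first uniformly in $x$ by applying Lemma~\ref{lem:flat} \emph{at time $\lfloor nt\rfloor$} --- legitimate because the shuffling preserves $\pi_\rho$, so $h(\cdot,\lfloor nt\rfloor)-h(0,\lfloor nt\rfloor)$ is again distributed according to $\Omega_\rho$. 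You instead apply Lemma~\ref{lem:flat} only at time $0$ and push the supremum onto the time-increment term $B_n$, which you tame by (i) the spatial $T$-covariance identity $h(z,s;\vph,\omega)-h(z,0;\vph,\omega)=h(0,s;\tau_{-z}\vph-\vph(z),\tau_{-z}\omega)$ together with the $T$-invariance of $\pi_\rho$ and the i.i.d.\ structure of $\omega$, and (ii) an $\vep$-net, union bound and the $2$-spatial-Lipschitz property on $2\mbb Z^2$. Both ingredients are sound: the $T$-invariance of $\pi_\rho$ is implicit in its construction as a limit of $T$-periodic torus measures and is already used in the proof of Lemma~\ref{lem:originspeed}, and your bookkeeping of the additive constant $\vph(z)\in 4\mbb Z$ is valid --- though note the correct reason is that $z\in 2\mbb Z^2$ (a $T$-translate of the origin), not merely that $z$ is an even face; e.g.\ $v(1,1)=-2\notin 4\mbb Z$. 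What the paper's approach buys is brevity: the dynamical invariance of $\pi_\rho$ controls the entire spatial profile at time $\lfloor nt\rfloor$ in one stroke, making the net argument unnecessary. What your approach buys is that the uniform-in-$x$ control requires only the spatial invariance of the \emph{initial} measure (the dynamical invariance enters only through Lemma~\ref{lem:originspeed}), at the cost of the extra interpolation and union-bound step.
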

\begin{proof}
On one hand, by Lemma~\ref{lem:originspeed}, 
\ba{
\lim_{n\rar \infty} \mbb{E}\left|\fr1nh(0,\lfloor nt\rfloor)+tH(\rho)\right|&=t\lim_{n\rar \infty} \mbb{E}\left|\fr1{nt}h(0,\lfloor nt\rfloor)+H(\rho)\right|=0.\label{ff1}
}
On the other hand, since the shuffling process preserves the Gibbs measure of dimer coverings, the random function $h(\cdot, \lfloor nt\rfloor)-h(0,\lfloor nt\rfloor)$ is also distributed according to $\Omega_\rho$. Therefore, by Lemma~\ref{lem:flat}, we know that
\ba{
\lim_{n\rar \infty} \mbb{E} \sup_{|x|_1\leq R}\left| \fr1n\lp h(\lfloor nx\rfloor,\lfloor nt\rfloor)-h(0,\lfloor nt\rfloor)\rp-x\cdot \rho\right|=0.\label{ff2}
}
Now we combine \eqref{ff1} and \eqref{ff2} to deduce that
\al{
&\lim_{n\rar \infty} \mbb{E} \sup_{|x|_1\leq R}\left| \fr1nh(\lfloor nx\rfloor,\lfloor nt\rfloor)-x\cdot \rho+tH(\rho)\right|\\
\leq &\lim_{n\rar \infty} \mbb{E}\lp\sup_{|x|_1\leq R}\left| \fr1n\lp h(\lfloor nx\rfloor,\lfloor nt\rfloor)-h(0,\lfloor nt\rfloor)\rp-x\cdot \rho\right|+ \left|\fr1nh(0,\lfloor nt\rfloor)+tH(\rho)\right|\rp\\
=&0+0=0.
}
\end{proof}

\subsection{More on limit points}

It turns out that the information we obtained from the Gibbs measures tells us more about the limit points in Section~\ref{sec:limit}. First, we need a lemma that relates the initial conditions in Section~\ref{sec:smoothspace} and Section~\ref{sec:pipini}.

Recall from Section~\ref{sec:hydro} that the sequence of height processes $(h_n(x,t))_{n\in \mbb Z_{>0}}$ has its initial conditions $h_n(\cdot, 0)$ given by the probability space $\Omega_0$. 
\begin{lem}\label{lem:initapprox}
Given $g\in \Gamma$, if
\ba{
\lim_{n\rar 0}\mbb{E}\sup_{|x|_1\leq R} \left| \fr1nh_n(\lfloor nx\rfloor,0)-g(x)\right|=0\label{iaass}
}
for all $R>0$, then
\ba{
\lim_{n\rar 0}\mbb{E}\sup_{|x|_1\leq R, t\leq T} \left| \fr1nh_n(\lfloor nx\rfloor,\lfloor nt\rfloor)-
\wh S_n(0,t;g)(x)\right|=0\label{iastt}
}
for all $R>0$.
\end{lem}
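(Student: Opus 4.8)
The plan is to couple everything through a single realization of the Bernoulli mark $\omega\in\Omega$ and to compare the evolution from the given (random) initial data $h_n(\cdot,0)$ with the evolution from the canonical profile $\vph_g^n$, since $\wh S_n(0,t;g)$ is, up to errors of order $1/n$, exactly the rescaled and interpolated height process started from $\vph_g^n$. Fix $R>0$ and set $R':=R+2T+2$. For $|x|_1\le R$ and $t\le T$ I would split, pointwise,
\al{
\left|\tfrac1n h_n(\lfloor nx\rfloor,\lfloor nt\rfloor)-\wh S_n(0,t;g)(x)\right|\ \le\ (\mathrm{I})+(\mathrm{II}),
}
where $(\mathrm{I}):=\left|\tfrac1n h_n(\lfloor nx\rfloor,\lfloor nt\rfloor)-\tfrac1n h(\lfloor nx\rfloor,\lfloor nt\rfloor;\vph_g^n,\omega)\right|$ carries the two initial data through the same dynamics, and $(\mathrm{II}):=\left|\tfrac1n h(\lfloor nx\rfloor,\lfloor nt\rfloor;\vph_g^n,\omega)-\wh S_n(0,t;g)(x)\right|$ is purely an interpolation error.

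For $(\mathrm{II})$ I would perform a chain of replacements, each costing $O(1/n)$: replace $t$ by $\lfloor nt\rfloor/n$ using the time-Lipschitz bound \eqref{ptlip}; replace $\wh S_n$ by $S_n$ via Proposition~\ref{prop:error}; replace $x$ by $\lfloor nx\rfloor/n$ using that $S_n(0,\cdot;g,\omega)\in\Gamma$ is $2$-spatially-Lipschitz; and finally pass from $S_n(0,\lfloor nt\rfloor/n;g,\omega)(\lfloor nx\rfloor/n)$ to $\tfrac1n h(\lfloor nx\rfloor,\lfloor nt\rfloor;\vph_g^n,\omega)$ by \eqref{sndif} (the degenerate case $\lfloor nt\rfloor=0$ being handled directly from \eqref{g2hbd}). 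This bounds $(\mathrm{II})$ by $C/n$ for a universal $C$, uniformly in $x,t$, so its expectation vanishes.

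The substance is in $(\mathrm{I})$. The key observation is that $h_n(\cdot,0)$ and $\vph_g^n$ both lie in $\Phi$ and therefore agree modulo $4$ at every face, so $A_n:=\sup_{|y|_1\le nR'}\left|h_n(y,0)-\vph_g^n(y)\right|$ is a nonnegative integer multiple of $4$. By Lemma~\ref{lem:linear}, for $|x|_1\le R$ and $t\le T$ the value $h(\lfloor nx\rfloor,\lfloor nt\rfloor;\cdot,\omega)$ depends only on the initial configuration inside $\{y:|y|_1\le nR'\}$ — this is precisely why $R'$ is chosen this way — so Proposition~\ref{prop:local}(1), applied with $4k=A_n$ pathwise in $\omega$, gives $(\mathrm{I})\le\tfrac1n A_n$ for all such $x,t$. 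Using the identity $\{y\in\mbb Z^2:|y|_1\le nR'\}=\{\lfloor nx\rfloor:x\in\tfrac1n\mbb Z^2,\ |x|_1\le R'\}$ together with the estimate $\left|\tfrac1n\vph_g^n(y)-g(y/n)\right|\le 4/n$ from \eqref{g2hbd}, one obtains
\al{
\tfrac1n A_n\ \le\ \sup_{|x|_1\le R'}\left|\tfrac1n h_n(\lfloor nx\rfloor,0)-g(x)\right|+\tfrac4n,
}
whose expectation tends to $0$ by the hypothesis \eqref{iaass} applied with $R'$ in place of $R$. Adding the bounds on $(\mathrm{I})$ and $(\mathrm{II})$ yields \eqref{iastt}.

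I expect the only genuinely delicate point to be the interplay between the finite speed of propagation (Lemma~\ref{lem:linear}) and the mod-$4$ rigidity of $\Phi$: this is what turns the \emph{a priori} unbounded and random mismatch of the two initial conditions into an exact integer multiple of $4$, allowing Proposition~\ref{prop:local} to transfer the $L^1$-type control from time $0$ to all times $t\le T$. Everything else is routine bookkeeping of the radii and of the $O(1/n)$ interpolation errors.
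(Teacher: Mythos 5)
Your proposal is correct and follows essentially the same route as the paper's proof: reduce the hypothesis to a comparison with $\vph_g^n$ via \eqref{g2hbd}, absorb the interpolation discrepancies into an $O(1/n)$ term via \eqref{sndif} and Proposition~\ref{prop:error}, and transfer the initial-data mismatch forward in time with Proposition~\ref{prop:local} on a ball enlarged by $2T+O(1)$. Your explicit remark that the sup of $|h_n(\cdot,0)-\vph_g^n|$ is an integer multiple of $4$ (so that Proposition~\ref{prop:local} genuinely applies) makes precise a point the paper's proof leaves implicit.
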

\begin{proof}
By \eqref{g2hbd}, we might as well replace the assumption \eqref{iaass} by
\ba{\label{iaass2}
\lim_{n\rar 0}\mbb{E}\sup_{|x|_1\leq R} \fr1n\left| h_n(\lfloor nx\rfloor,0)-\vph_g^n(\lfloor nx\rfloor)\right|=0,\;\;\;\;\forall R>0.
}

To be precise about the source of the randomness, we let $\omega_0\in\Omega_0$ refer to the initial condition, and $\omega\in \Omega$ refer to the Bernoulli mark as usual that dictates the shuffling. Using \eqref{sndif} and Proposition~\ref{prop:error}, we know that, for a given $t\geq 0$,
\ba{
&\sup_{|x|_1\leq R} \left| \fr1nh_n(\lfloor nx\rfloor,\lfloor nt\rfloor;\omega_0;\omega)-
\wh S_n(0,t;g,\omega)\right|\nonumber\\
\leq & \sup_{|x|_1\leq R} \fr1n\left| h_n(\lfloor nx\rfloor,\lfloor nt\rfloor;\omega_0;\omega)-h\lp \lfloor nx\rfloor,\lfloor nt\rfloor;\vph_g^n, \omega\rp\right|+\fr{C_3}n\label{ia1}
}
for some global constant $C_3$. And by Proposition~\ref{prop:local},
\al{
&\sup_{|x|_1\leq R} \fr1n\left| h_n(\lfloor nx\rfloor,\lfloor nt\rfloor;\omega_0;\omega)-h\lp \lfloor nx\rfloor,\lfloor nt\rfloor;\vph_g^n, \omega\rp\right|\\\leq& \sup_{|x|_1\leq R+2t+1} \fr1n\left| h_n(\lfloor nx\rfloor,0;\omega_0)-\vph_g^n(\lfloor nx\rfloor)\right|\numberthis\label{ia2}
}
for all $n$ large enough. Therefore, combining \eqref{ia1} and \eqref{ia2},
\al{
&\limsup_{n\rar \infty}\mbb{E}\sup_{|x|_1\leq R, t\leq T} \left| \fr1nh_n(\lfloor nx\rfloor,\lfloor nt\rfloor)-\wh S_n(0,t;g)(x)\right|\\
= &\limsup_{n\rar \infty}\int \sup_{|x|_1\leq R, t\leq T} \left| \fr1nh_n(\lfloor nx\rfloor,\lfloor nt\rfloor;\omega_0;\omega)-\wh S_n(0,t;g,\omega)(x)\right|d\omega_0 d\omega\\
\leq &\limsup_{n\rar \infty}\int\sup_{|x|_1\leq R+2T+1}\lp\fr1n\left| h_n(\lfloor nx\rfloor,0;\omega_0)-\vph_g^n(\lfloor nx\rfloor)\right|+\fr{C_3}n\rp d\omega_0 \\
=&\lim_{n\rar \infty}\mbb{E}\sup_{|x|_1\leq R+2T+1} \fr1n\left| h_n(\lfloor nx\rfloor,0)-\vph_g^n(\lfloor nx\rfloor)\right|+\lim_{n\rar 0}\fr{C_3}n=0
}
by our assumption \eqref{iaass2}.
\end{proof}

Now we can make the following additional characterization about the limit points of $\lp \mu_n\rp_{n\in\mbb Z_{>0}}$. 
\begin{prop}\label{prop:limitflat}
All subsequential limits of $\lp \mu_n\rp_{n\in\mbb Z_{>0}}$ satisfy the following property almost surely:
\al{
S(s,t;g_\rho)=g_\rho-(t-s)H(\rho),\;\;\;\;\forall \rho\in U, 0\leq s \leq t\leq T
}
where $g_\rho(x)=x\cdot \rho$.
\end{prop}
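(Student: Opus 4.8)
The plan is to prove the identity first at initial time $s=0$, by combining the hydrodynamic law of large numbers from a Gibbs (fixed-slope) profile with the weak convergence of $\lp\mu_n\rp$, and then to bootstrap to general $s$ using the semigroup structure of the limit points. Fix $\rho\in U^o$, $t\in(0,T]$ and $k\in\mbb N$. For the sequence of initial conditions, take a single random admissible height function $h_n(\cdot,0)$ — the same for all $n$ — distributed according to $\Omega_\rho$ (after the normalization $h(0)=0$ it is admissible, since $0\equiv 0\pmod 4$). By Lemma~\ref{lem:flat} the hypothesis of Lemma~\ref{lem:initapprox} is met with $g=g_\rho$, so Lemma~\ref{lem:initapprox} gives
\al{
\lim_{n\rar\infty}\mbb E\sup_{|x|_1\le k}\left|\fr1n h_n(\lfloor nx\rfloor,\lfloor nt\rfloor)-\wh S_n(0,t;g_\rho)(x)\right|=0,
}
while Proposition~\ref{prop:futureflat}, applied to the very same process, gives
\al{
\lim_{n\rar\infty}\mbb E\sup_{|x|_1\le k}\left|\fr1n h_n(\lfloor nx\rfloor,\lfloor nt\rfloor)-x\cdot\rho+tH(\rho)\right|=0.
}
Combining these two estimates with the triangle inequality, and observing that the resulting integrand $\|\wh S_n(0,t;g_\rho)-(g_\rho-tH(\rho))\|_k$ depends only on the Bernoulli marks $\omega\in\Omega$ (not on the randomness of the initial condition), we obtain
\al{
\lim_{n\rar\infty}\mbb E_\Omega\bigl\|\wh S_n(0,t;g_\rho)-\bigl(g_\rho-tH(\rho)\bigr)\bigr\|_k=0.\numberthis\label{eq:flatconv}
}

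Next I pass to the weak limit. The functional $S\mapsto\|S(0,t;g_\rho)-(g_\rho-tH(\rho))\|_k$ is nonnegative, bounded on $\ag C'_T$ (by Property~\eqref{prop3} the value of $S(0,t;g_\rho)$ at the origin is at most $r$, $|H(\rho)|\le 4$, and passing to $\|\cdot\|_k$ adds at most $4k$), and continuous on $\ag C'_T$, because evaluation $S\mapsto S(0,t)$ is continuous into $\ag E^r_q$ and $D(F_1,F_2)\to0$ forces $\|F_1(g_\rho)-F_2(g_\rho)\|_k\to0$. Hence, if $\mu$ is the weak limit of a subsequence $\mu_{n_i}$, then \eqref{eq:flatconv} and the Portmanteau theorem give $\int\|S(0,t;g_\rho)-(g_\rho-tH(\rho))\|_k\,d\mu(S)=0$, so $\mu$-almost surely $S(0,t;g_\rho)=g_\rho-tH(\rho)$ on $\{|x|_1\le k\}$. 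Intersecting this $\mu$-full event over $k\in\mbb N$, over $t$ in a countable dense subset of $(0,T]$, and over $\rho$ in a countable dense subset of $U^o$, we get a single $\mu$-full event on which $S(0,t;g_\rho)=g_\rho-tH(\rho)$ for all these $t$ and $\rho$. On this event both sides depend continuously on $t$ (since $S\in\ag C'_T$) and on $\rho$ (since $S(0,t;\cdot)\in\ag E^r_q$ is Lipschitz for the metric of $\Gamma$ by Lemma~\ref{lem:lipg}, $\rho\mapsto g_\rho$ is continuous into $\Gamma$, and $H$ is continuous on $U$), hence the identity $S(0,t;g_\rho)=g_\rho-tH(\rho)$ extends to all $t\in[0,T]$ and all $\rho\in U$.

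Finally I treat $s>0$. By Proposition~\ref{prop:char}, $\mu$-almost every $S$ satisfies the semigroup Condition~\eqref{cond2} as well as Condition~\eqref{cond1}. Fix $0\le s\le t\le T$ and $\rho\in U$. Applying~\eqref{cond2} with $(t_1,t_2,t_3)=(0,s,t)$ and $g=g_\rho$, and then the $s=0$ identity above for the inner evolution,
\al{
S\bigl(s,t;g_\rho-sH(\rho)\bigr)=S\bigl(s,t;S(0,s;g_\rho)\bigr)=S(0,t;g_\rho)=g_\rho-tH(\rho).
}
Since $-sH(\rho)$ is a constant function, Property~\eqref{prop1} of $\ag E^r_q$ gives $S(s,t;g_\rho-sH(\rho))=S(s,t;g_\rho)-sH(\rho)$, whence $S(s,t;g_\rho)=g_\rho-(t-s)H(\rho)$, as desired.

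The only genuine difficulty is in the first paragraph: one has to recognize $\|S(0,t;g_\rho)-(g_\rho-tH(\rho))\|_k$ as a bounded continuous functional so that weak convergence converts an expectation bound into an almost sure statement, and then assemble the two already available convergences — Lemma~\ref{lem:initapprox}, which transports the Gibbs-initialised process onto $\wh S_n(0,t;g_\rho)$, and Proposition~\ref{prop:futureflat}, the law of large numbers from the flat profile — while carefully tracking which source of randomness ($\Omega_\rho$ or $\Omega$) each term involves. The passage to the limit and the bootstrap in $s$ are then routine, given the localization and semigroup properties of the limit points established in Section~\ref{sec:lp}.
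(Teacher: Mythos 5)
Your proposal is correct and follows essentially the same route as the paper: combine Lemma~\ref{lem:flat}, Proposition~\ref{prop:futureflat} and Lemma~\ref{lem:initapprox} to get $\lim_n\mbb E\|\wh S_n(0,t;g_\rho)-(g_\rho-tH(\rho))\|_k=0$, convert this to a $\mu$-a.s.\ statement via a bounded continuous functional and weak convergence, extend from dense $(t,\rho)$ by the Lipschitz continuity of elements of $\ag E^r_q$ and of $S$ in $t$, and bootstrap to $s>0$ with Condition~\eqref{cond2} and Property~\eqref{prop1}. Your explicit remark that the integrand depends only on $\omega\in\Omega$ and not on the initial-condition randomness is a detail the paper leaves implicit, but the argument is otherwise identical.
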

\begin{proof}
Suppose a subsequence $(\mu_i)_{i\in\mbb N}:=(\mu_{n_i})_{i\in\mbb N}$ converges weakly to $\mu$. First let us fix some $R>0$, $\rho\in U^o$ and $t\in(0,T]$. Combining Lemma~\ref{lem:flat}, Proposition~\ref{prop:futureflat} and Lemma~\ref{lem:initapprox}, we obtain that
\ba{
\lim_{i\rar \infty} \mbb{E}\sup_{|x|_1\leq R}\left|\wh S_{n_i}(0,t;g_\rho)(x)-g_\rho(x)+tH(\rho)\right|=0.
}

Viewing $\dps\sup_{|x|_1\leq R}\left|S(0,t;g_\rho)(x)-g_\rho(x)+tH(\rho)\right|$ as a function of $S\in \ag C'_T$, it is clearly continuous, tracing through the definitions \eqref{ctmetric} \eqref{Ddef} of the metric for the uniform topology. It is also bounded because $S(0,t;\cdot)\in \ag E_q^r$. Therefore, by the definition of weak convergence, 
\ba{
\mbb{E}_\mu\sup_{|x|_1\leq R}\left| S(0,t;g_\rho)(x)-g_\rho(x)+tH(\rho)\right|=\lim_{i\rar \infty} \mbb{E}\sup_{|x|_1\leq R}\left|\wh S_{n_i}(0,t;g_\rho)(x)-g_\rho(x)+tH(\rho)\right|=0.
}
Since $R>0$ is arbitrary, we deduce that $S(0,t;g_\rho)=g_\rho-tH(\rho)$ $\mu$-almost surely.

We can choose a dense subset of $t\in (0,T]$ and $\rho\in U^o$ so that $S(0,t;g_\rho)=g_\rho-tH(\rho)$ holds $\mu$-almost surely for all such $t$ and $\rho$.

Now because $H(\rho)$ is in fact continuous on the entire $U$, by definition \eqref{gammametric}, $d(g_\rho-tH(\rho),g_{\rho'}-tH(\rho'))\rar 0$ as $\rho'\rar \rho$. Then by Lemma~\ref{lem:lipg}, we know that, fixing $t$, $S(0,t;g_\rho)=g_\rho-tH(\rho)$ for all $\rho\in U$ $\mu$-almost surely.

Recall that we are working on $\ag C'_T$, so $S(0,t;\cdot)$ is $(10+2C_0)$-Lipschitz continuous in $t$ $\mu$-almost surely. Again due to uniform topology, we conclude that $S(0,t;g_\rho)=g_\rho-tH(\rho)$ holds for all $t\in [0,T]$ and $\rho\in U$ $\mu$-almost surely.

We already know from Proposition~\ref{prop:char} that Condition~\eqref{cond2} holds $\mu$-almost surely. Then $\mu$-almost surely, for all $s,t$ such that $0\leq s\leq t\leq T$, using Property~\eqref{prop1},
\al{
S(s,t;g_\rho)&=S(s,t;g_\rho-sH(\rho))+sH(\rho)=S(s,t;S(0,s;g_\rho))+sH(\rho)\\
&=S(0,t;g_\rho)+sH(\rho)=g_\rho-(t-s)H(\rho),
}
so the proposition is proved.
\end{proof}

\section{Viscosity solution}\label{sec:vs}

We recall some PDE theory about Hamilton-Jacobi equations. For more details, see \cite{EL10}. Given $g, H\in C^0(\mbb R^2)$, consider the following first-order partial differential equation with initial condition
$g$
\begin{equation}
\lc\begin{array}{rl}u_t+H(u_x)&=0\\
u(x,0)&=g(x)\end{array}\right.\label{hje}
\end{equation}
where the solution $u(x,t)$ is a function on $\mbb R^2\times [0,T]$, and $u_x$ is supposed to be its gradient with respect to the two spatial coordinates.  It is possible to apply method of characteristics to obtain short-time solution, but even if $g$ and $H$ are smooth, shocks can form at finite time and the solution $u(x,t)$ becomes nondifferentiable. In order to describe the long-time evolution of the PDE and to deal with nonsmooth initial conditions, we have to consider weak solutions, which are not differentiable but still solve the PDE in some sense. A priori, the uniqueness and existence of weak solutions are not guaranteed. The viscosity solution is a particular choice that guarantee both. A function $u(x,t)$  is called a viscosity solution to \eqref{hje} if the following  conditions hold,
\begin{enumerate}
\item{$u$ is continuous on $\mbb R^2\times [0,T]$,}
\item{$u(\cdot, 0)= g$,}
\item{If $\phi\in C^\infty(\mbb R^2\times [0,T])$ and $(x_0, t_0)\in  \mbb R^2\times (0,T)$ satisfy that $\phi(x_0, t_0)= u(x_0, t_0)$ and $\phi\geq u$ in a neighborhood of $(x_0, t_0)$, then
\ba{\phi_t(x_0,t_0)+H(\phi_x(x_0, t_0))\leq 0,
\label{localmax}}}
\item{If $\phi\in C^\infty(\mbb R^2\times [0,T])$ and $(x_0, t_0)\in  \mbb R^2\times (0,T)$ satisfy that $\phi(x_0, t_0)= u(x_0, t_0)$ and $\phi\leq u$ in a neighborhood of $(x_0, t_0)$, then
\ba{\phi_t(x_0,t_0)+H(\phi_x(x_0, t_0))\geq 0.
\label{localmin}}
}

\end{enumerate}

To prove the main result, we first identify the semigroup of the shuffling height process with the semigroup of the PDE.
\begin{prop}\label{prop:idsemi}
All subsequential limits of $(\mu_n)_{n\in \mbb Z>0}$ are concentrated on a single $S\in \ag C'_T$, such that $u(x,t):=S(0,t;g)(x)$ coincides with the unique viscosity solution to \eqref{pde}.
\end{prop}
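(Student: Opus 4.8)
The plan is to show that, on the full-measure event where $S\in\ag C^0_T$ and $S(s,t;g_\rho)=g_\rho-(t-s)H(\rho)$ for all $\rho\in U$, $0\le s\le t\le T$ (Proposition~\ref{prop:char} and Proposition~\ref{prop:limitflat}; here $g_\rho(x)=x\cdot\rho$), the function $u(x,t):=S(0,t;g)(x)$ is \emph{the} unique viscosity solution of~\eqref{pde}. Uniqueness then upgrades ``$S(0,\cdot;g)$ is a.s.\ this function'' to ``the whole limit is a Dirac mass at a single $S$''. The first two conditions in the definition of a viscosity solution are free: $u$ is continuous --- indeed Lipschitz, since $S\in\ag C'_T$ makes $S(0,\cdot;g)$ Lipschitz in time while $S(0,t;g)\in\Gamma$ is $2$-Lipschitz in space --- and $u(\cdot,0)=S(0,0;g)=g$ by Condition~\eqref{cond1}. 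So the work lies in the sub- and supersolution inequalities~\eqref{localmax}, \eqref{localmin}.

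For~\eqref{localmax}, fix $\phi\in C^\infty$ and $(x_0,t_0)\in\mbb R^2\times(0,T)$ with $\phi\ge u$ near $(x_0,t_0)$, $\phi(x_0,t_0)=u(x_0,t_0)$; put $\rho:=\phi_x(x_0,t_0)$, $b:=\phi_t(x_0,t_0)$. The crucial observation --- this is where the model's gradient bound enters --- is that $\rho\in U$ automatically: $u(\cdot,t_0)$ is $2$-spatially-Lipschitz and $\phi(\cdot,t_0)\ge u(\cdot,t_0)$ touches at $x_0$, so $\phi(x,t_0)-\phi(x_0,t_0)\ge-2|x-x_0|_\infty$ near $x_0$, forcing $|\rho|_1\le2$; hence $H(\rho)$ and the flat-evolution identity are legitimate. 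Next I would localize: for small $h>0$, a first-order Taylor expansion of $\phi$ at $(x_0,t_0)$ gives a constant $C=C(\phi)$ with $\phi(x,t_0-h)\le\ell_h(x):=g_\rho(x)+c_h\in\Gamma$ on $B_h:=\lc x:|x-x_0|_1\le2h\rc$, where $c_h:=u(x_0,t_0)-bh+Ch^2-\rho\cdot x_0$; since $\phi\ge u$ on a full spacetime neighbourhood, also $u(\cdot,t_0-h)\le\ell_h$ on $B_h$. As $\Gamma$ is closed under $\wedge$ (proof of Lemma~\ref{lem:local}), $w_h:=u(\cdot,t_0-h)\wedge\ell_h\in\Gamma$ agrees with $u(\cdot,t_0-h)$ on $B_h$ and lies below $\ell_h$ globally. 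Now chain: the semigroup property~\eqref{cond2} gives $u(x_0,t_0)=S(t_0-h,t_0;u(\cdot,t_0-h))(x_0)$; finite speed~\eqref{cond3} (propagation radius $2h-2h=0$ at $x_0$) replaces $u(\cdot,t_0-h)$ by $w_h$; monotonicity~\eqref{prop2} bounds this by $S(t_0-h,t_0;\ell_h)(x_0)$; and the flat evolution together with~\eqref{prop1} evaluates the last quantity to $u(x_0,t_0)-bh+Ch^2-hH(\rho)$. Dividing the resulting inequality $u(x_0,t_0)\le u(x_0,t_0)-bh+Ch^2-hH(\rho)$ by $h$ and letting $h\downarrow0$ gives $b+H(\rho)\le0$. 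The supersolution inequality~\eqref{localmin} is obtained the same way, now touching $u$ from below and using $\vee$ and the reversed monotonicity.

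With $u$ shown to be a viscosity solution of~\eqref{pde} that remains in $\Gamma$ (so $H$ is evaluated only on $U$), I would invoke the standard comparison principle for Hamilton--Jacobi equations (see \cite{EL10}; extend $H$ continuously to $\mbb R^2$ if needed, the $2$-Lipschitz bound being preserved by the evolution so the extension is immaterial) to conclude $u$ is \emph{the} unique viscosity solution, hence deterministic, hence a.s.\ equal to $S(0,\cdot;g)$. Finally, re-running the whole argument with an arbitrary initial time $s\in[0,T)$ and arbitrary profile $g'\in\Gamma$ --- Proposition~\ref{prop:limitflat}, \eqref{cond2}, \eqref{cond3} all hold for general times --- shows $S(s,t;g')$ is a.s.\ the time-$t$ value of the unique viscosity solution started from $g'$ at time $s$, which pins down $S$ entirely; so each subsequential limit is the Dirac mass at this $S\in\ag C'_T$. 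The step I expect to be hardest is the middle one: engineering the local affine comparison so that \emph{global} monotonicity and the flat-evolution identity apply (whence the pointwise minimum with $\ell_h$ and the finite-speed truncation), and verifying that admissible test-function gradients always land in $U$, the only place where $H$ and the identity $S(s,t;g_\rho)=g_\rho-(t-s)H(\rho)$ live.
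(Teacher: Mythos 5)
Your proposal is correct and follows essentially the same route as the paper: establish that the touching gradient lies in $U$ (the paper's Lemma~\ref{lem:phider}), compare with an affine profile via the lattice/localization and monotonicity properties, and evaluate using the flat-evolution identity of Proposition~\ref{prop:limitflat} before invoking uniqueness of the viscosity solution. The only cosmetic differences are that you use a fully affine comparison function $\ell_h$ (absorbing the time derivative via Taylor expansion) and the exact finite-speed property at radius zero, where the paper keeps a space-affine, time-dependent $\wt\phi$ and uses the quantitative localization bound of Lemma~\ref{lem:limitlocal} with an $O(\delta^2)$ error.
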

\begin{proof}
We know that all subsequential limits of $(\mu_n)_{n\in \mbb Z>0}$ have probability 1 on $S\in \ag C'_T$ satisfying Condition~\eqref{cond1}, \ref{cond2} and \ref{cond3} as well as the property in Proposition~\ref{prop:limitflat}. 

We will check the four criteria for the viscosity solution. The continuity of $u$ is guaranteed by the fact that $S(0,t;g)\in \Gamma$ and the Lipschitz continuity of $S$ in $t$. Also Condition~\eqref{cond1} implies that $u(\cdot, 0)=g$. We shall verify criterion \eqref{localmax}. The last one \eqref{localmin} is similar.

Let $\phi$ and $(x_0,t_0)$ be as described in the assumption for \eqref{localmax}. Suppose $B\subset \mbb R^2\times (0,T)$ is an open ball centered at $(x_0, t_0)$ in which $\phi\geq u$. An issue here  is that {\it $\phi(\cdot, t)$ is not necessarily in $\Gamma$}, so we cannot directly apply $S$ on $\phi(\cdot, t)$. First, we prove the following lemma.
\begin{lem}\label{lem:phider}
$
\phi_x(x_0,t_0)\in U.
$
\end{lem}
\begin{proof}
Let $\mbf v:=\phi_x(x_0,t_0)$, and define vector $\mbf w:=(w_1,w_2)$. By definition, for all small $k\in\mbb R$,
\ba{
\phi\lp x_0+k\mbf w,t_0\rp=\phi\lp x_0, t_0\rp+k\mbf v\cdot \mbf w+o(k).\label{711}
}
Since $\phi(x_0,t_0)=u(x_0,t_0)$ and $\phi\geq u$ in $B$, for small $k$,
\ba{
\phi\lp x_0+k\mbf w,t_0\rp-\phi\lp x_0, t_0\rp&\geq u\lp x_0+k\mbf w,t_0\rp-u\lp x_0, t_0\rp\geq -2|k||\mbf w|_\infty.\label{712}
}
Combining \eqref{711} and \eqref{712}, we get for all small $k$,
\al{
k\mbf v\cdot \mbf w+o(k)&\geq -2|k||\mbf w|_\infty.
}
Divided by $k$, this implies 
\al{
-2|\mbf w|_\infty\leq \mbf v\cdot \mbf w\leq 2|\mbf w|_\infty.
}
Taking $\mbf w=(\pm1, \pm1)$, we obtain a set of inequalities of the form $\pm v_1\pm v_2\leq 2$. Therefore $\mbf v$ must satisfy that $|\mbf v|_1\leq 2$, or $\mbf v\in U$.
\end{proof}

Now we define a new function affine in space
\al{
\wt \phi(x,t):=\phi(x_0,t)+(x-x_0)\cdot \phi_x(x_0,t_0).
}
We have $\wt \phi(x_0,t_0)=\phi(x_0,t_0)=u(x_0,t_0)$. Since $\phi$ is smooth, we can assume that for $(x,t)\in B$,
\al{
\phi(x,t)-\wt \phi(x,t)&=\phi(x,t)-\phi(x_0,t)-(x-x_0)\cdot \phi_x(x_0,t_0)\\
&=(x-x_0)\cdot \phi_x(x_0,t)+O((x-x_0)^2)-(x-x_0)\cdot \phi_x(x_0,t_0)\\
&=O((x-x_0)^2+(x-x_0)(t-t_0)).
}
As $\phi\geq u$ in $B$, we have 
\ba{
\wt \phi(x,t)-u(x,t)\geq -C((x-x_0)^2+(x-x_0)(t-t_0))\label{newphiu}
}
for some constant $C>0$ and all $(x,t)\in B$. Furthermore $\wt \phi_x(x,t)=\phi_x(x_0,t_0)$, and $\wt \phi_t(x,t)=\phi_t(x_0,t)$. In particular, $\wt \phi(\cdot, t)\in \Gamma$ for all $t$.

For all small enough $\delta$, we have
\al{
\lc (x,t):|x-x_0|_1\leq 2\delta, |t-t_0|_1\leq \delta\rc\subset B.
}
By Condition~\eqref{cond2},
\al{
\wt \phi(x_0,t_0)&=u(x_0,t_0)=S(t_0-\delta, t_0;S(0,t_0-\delta;g))(x_0)=S(t_0-\delta, t_0;u(\cdot, t_0-\delta))(x_0).
}
Define $g_1=u(\cdot, t_0-\delta)$ and $g_2=u(\cdot, t_0-\delta)\wedge \wt \phi(\cdot, t_0-\delta)$ ($g_2\in \Gamma$ as shown in the proof of Lemma~\ref{lem:local}). From \eqref{newphiu}, we have
\al{
\sup_{y:|y-x_0|_1\leq 2\delta}|g_1(y)-g_2(y)|\leq C\delta^2,
}
Now we apply Lemma~\ref{lem:limitlocal} to $g_1$ and $g_2$ to deduce
\al{
|S(t_0-\delta, t_0;g_1)(x_0)-S(t_0-\delta, t_0;g_2)(x_0)|\leq C\delta^2.
}
Since $\wt \phi(\cdot, t_0-\delta)\geq g_2$, by Property~\eqref{prop2}, 
\al{
S(t_0-\delta, t_0;\wt \phi(\cdot, t_0-\delta))(x_0)&\geq S(t_0-\delta, t_0;g_2)(x_0)\\
&\geq S(t_0-\delta, t_0;g_1)(x_0)-C\delta^2=\wt \phi(x_0,t_0)-C\delta^2.
}

On the other hand, using Property~\eqref{prop1} and the property in Proposition~\ref{prop:limitflat},
\al{
S(t_0-\delta, t_0;\wt \phi(\cdot, t_0-\delta))(x_0)&=S(t_0-\delta, t_0;\wt \phi(\cdot, t_0-\delta)- \wt \phi(0,t_0-\delta))(x_0)+ \wt \phi(0,t_0-\delta)\\
&=S(t_0-\delta, t_0;g_{\phi_x(x_0,t_0)})(x_0)+\wt \phi(0,t_0-\delta)\\
&=g_{\phi_x(x_0,t_0)}(x_0)-\delta H(\phi_x(x_0,t_0))+\wt \phi(0,t_0-\delta)\\
&=\wt \phi(x_0, t_0-\delta)- \wt \phi(0,t_0-\delta)-\delta H(\phi_x(x_0,t_0))+\wt \phi(0,t_0-\delta)\\
&=\wt \phi(x_0, t_0-\delta)-\delta H(\phi_x(x_0,t_0))
}
where we recall $g_\rho:=x\cdot \rho$ for $\rho\in \mbb R^2$. Along with the inequality above, we get
\al{
\wt \phi(x_0, t_0-\delta)-\delta H(\phi_x(x_0,t_0))&\geq \wt \phi(x_0,t_0)-C\delta^2\\
\Rightarrow \fr{\wt \phi(x_0,t_0)- \wt \phi(x_0, t_0-\delta)}\delta+H(\phi_x(x_0,t_0))&\leq C\delta.
}
Since $\wt \phi_t(x_0,t_0)=\phi_t(x_0,t_0)$, by taking $\delta\rar 0$, we obtain that
\al{
\phi_t(x_0,t_0)+H(\phi_x(x_0,t_0))\leq 0,
}
exactly as desired.

Even though $H$ is not defined on the whole $\mbb R^2$, notice that we only used the values of $H$ on $U$, so we still have the uniqueness of $u(x,t)$ with initial condition $g\in \Gamma$.

\end{proof}

The same argument shows that in fact $S(s,t;g)$ is determined for all $(s,t)\in [0,T]^2$ and $g\in \Gamma$.

\begin{proof}[Proof of Theorem~\ref{thm:main}]

Let $u$ be the unique viscosity solution to \eqref{pde} with initial condition $g$. From Proposition~\ref{prop:idsemi} and the precompactness of $\lc \mu_n\rc$, we deduce that in fact the entire sequence of random variables $\wh S_n$ converges weakly to the deterministic $\wh S\in \ag C'_T$ characterized by $\wh S(s,t;g)=u(0,t-s)$.

Tracing through the definition of the metric \eqref{ctmetric}, \eqref{Ddef} and \eqref{gammak}, it is easy to see that the following function is continuous on $\ag C'_T$,
\al{
f(S):=\sup_{|x|_1\leq T, t\leq T} \left|S(0,t;g)(x)-u(x,t)\right|.
}
It is also bounded because $S(0,t;g)(x)$ is bounded when $|x|_1\leq T, t\leq T$, due to Property~\eqref{prop3} and the fact that $g\in \Gamma$. Therefore
\al{
\lim_{n\rar \infty}\mbb E\sup_{|x|_1\leq T, t\leq T} \left|\wh S_n(0,t;g)(x)-u(x,t)\right|
&=\lim_{n\rar \infty}\mbb E f(\wh S_n)=\mbb Ef(\wh S)\\
&=\mbb E\sup_{|x|_1\leq T, t\leq T} \left|\wh S(0,t;g)(x)-u(x,t)\right|=0.\numberthis\label{whslimit}
}
From  \eqref{sndif} and Proposition~\ref{prop:error}, we know that
\ba{
\lim_{n\rar \infty}\mbb E\sup_{x\in \mbb R^2, t\leq T} \left|\wh S_n(0,t;g)(x)-\fr1nh\lp\lfloor nx\rfloor, \lfloor nt\rfloor;\vph_g^n\rp\right|=0.\label{whsnheight}
}
Also by \eqref{g2hbd},
\ba{
\lim_{n\rar \infty}\sup_{x\in \mbb R^2}\left|\fr1n\vph_g^n(\lfloor nx\rfloor)-g(x)\right|=0.\label{vphg}
}

Now consider a sequence of shuffling height processes $(h_n(x,t))_{n\in \mbb Z_{>0}}$ with initial conditions given by a probability space $\Omega_0$, satisfying \eqref{init} for every $R>0$. Then \eqref{hydro} follows with $R=T$, by putting together \eqref{whslimit}, \eqref{whsnheight}, \eqref{vphg} and Lemma~\ref{lem:initapprox}. Finally we can choose $T>0$ arbitrarily, so Theorem~\ref{thm:main} is established.
\end{proof}

\section{Remarks}\label{sec:comm}

The more general dimer shuffling height process introduced in Section~\ref{sec:dimershuffle} encompasses  the shuffling dynamics on the 2-periodic $\mbb Z^2$ lattice considered in \cite{CT18}, as well as shuffling built from resistor networks discussed in \cite{AK11}. We shall briefly explain the construction of the latter.

Consider a toroidal weighted graph $G$, not necessarily bipartite. A zig-zag path on the graph is a path that alternatingly turns maximally left and right. The graph is minimal if, when lifted to the universal cover, zig-zag paths do not self-intersect, and two zig-zag paths intersect at most once. By repeatedly performing the $Y$-$\Delta$ local moves, one can modify the graph in the way that, combinatorially, one specific zig-zag path is slided once around the torus while the other zig-zag paths stay put. In general, the edge weights will be different after such an operation. However, if the graph is isoradial, similar to Section~\ref{sec:gibbs} except that the edge weight is given by tangent instead of sine of the angle, the $Y$-$\Delta$ moves can be performed while keeping isoradiality as well as the transversal directions of the zig-zag paths.  As a result, after a sliding operation, the edge weights return to the original ones. This can be turned into a dimer shuffling by the Temperley bijection, and each $Y$-$\Delta$ move can be decomposed into four spider moves (see \cite[Lemma 5.11]{AK11}).

In terms of extending the results in this paper, the case of the 2-periodic $\mbb Z^2$ lattice can be done similarly using the results from \cite{CT18}. In the case of a more complicated graph as above, the results in Section~\ref{sec:em} can be extended without much effort, and the general strategy should still work, but the approximation schemes seem to be more complicated and graph-dependent, and beyond the scope of this paper. 

\begin{appendices}
\section{Proof of Proposition~\ref{prop:loc}}

The statement for vertex contraction/expansion move is obvious, because the mapping does not change the total weight of the dimer covering.

To prove the statement for the spider move, we refer to Figure~\ref{fig:spider} and \ref{fig:spidershuffle}. By an abuse of notation, we use $a,b,c,d, A,B,C,D$ to denote both the edges and their weights. If $S$ is a set of edges in the inner square on the LHS, then let $w^T_S$ be the weight of the dimer covering where exactly $S$ among the four edges of the inner square are included and $T$ is the rest of the edges in the covering. Define $\wt w^T_S$ similarly for the weight of the dimer covering on the RHS formed exactly by $S$, $T$ and a subset of the four new tentacles, where $S$ is a set of edges in the inner square and $T$ is a set of edges in the complement of the inner square and the four tentacles. Now consider the three rows in Figure~\ref{fig:spidershuffle} and the three omitted rows. We shall first verify that the ratio between the LHS and RHS of every row in Figure~\ref{fig:spidershuffle} is the same. By \eqref{spider},
\al{
&\fr{\wt w^T_{\{AC\}}+\wt w^T_{\{BD\}}}{w^T_{\emptyset}}=AC+BD=\fr{ac+bd}{(ac+bd)^2}=\fr1{ac+bd},\\
&\fr{\wt w^T_{\{A\}}}{w^T_{\{c\}}}=\fr{\wt w^T_{\{B\}}}{w^T_{\{d\}}}=\fr{\wt w^T_{\{C\}}}{w^T_{\{a\}}}=\fr{\wt w^T_{\{D\}}}{w^T_{\{b\}}}=\fr1{ac+bd},\\
&\fr{\wt w^T_{\emptyset}}{w^T_{\{ac\}}+w^T_{\{bd\}}}=\fr1{ac+bd}.
}
Therefore, if the dimer coverings were distributed according to their weights before the spider move, the probabilities after the spider move are also proportional to their weights, except in the first row of Figure~\ref{fig:spidershuffle}, the two results on the RHS are grouped together. It only remains to check that their probabilities are also proportional to their weights. Indeed,
\al{
\fr{\wt w^T_{\{AC\}}}{\wt w^T_{\{BD\}}}=\fr{AC}{BD},
}
and the spider move selects $A$ and $C$ with probability $\fr{AC}{AC+BD}$ and selects $B$ and $D$ with probability $\fr{BD}{AC+BD}$.

\section{Local interpolations for constructing $S_n$}\label{appenb}

Suppose we know the values of the function $f:=\psi_{s,t}$ at, for example, the four faces $(0,0), (2,0), (2,2), (0,2)$, and want to interpolate $f$ inside the square formed by these four points. Up to symmetry and a vertical shift, we shall assume that $f(0,0)=0$, $f$ is nonnegative at the other three points, and $f(2,0)\leq f(0,2)$. We consider all possible cases.

{\it Case 1}: If $f(2,0)=f(0,2)=f(2,2)=0$, we let $f=0$ inside the square.

{\it Case 2}: If $f(2,0)=f(0,2)=0$, and $f(2,2)=4$, for $(x,y)\in [0,2]^2$, we let $f(x,y)=2y$ if $x\geq y$, and $f(x,y)=2x$ if $x<y$. 

{\it Case 3}: If $f(2,0)=0$, $f(0,2)=4$ and $f(2,2)=0$, this is same as Case 2 under rotation.

{\it Case 4}: If $f(2,0)=0$, $f(0,2)=4$ and $f(2,2)=4$, define $f(x,y)=2y$.

{\it Case 5}: If $f(2,0)=f(0,2)=4$ and $f(2,2)=0$, for $(x,y)\in[0,2]^2$, we let $f(x,y)=2x$ for $y\leq \min\{2-x,x\}$, $f(x,y)=4-2y$ for $2-x\leq y\leq x$, $f(x,y)=4-2x$ for $y\geq \max\{2-x,x\}$, and $f(x,y)=2y$ for $x\leq y\leq 2-x$.

{\it Case 6}: If $f(2,0)=f(0,2)=f(2,2)=4$, this is an inverted version of Case 2.

Notice that in all the cases, the interpolation is linear on the four edges of the $2\times 2$ square. Therefore, we can glue together the interpolations on all the $2\times 2$ squares to obtain a global 2-spatially Lipschitz interpolation.

\end{appendices}

\section*{Acknowledgement}

The author would like to thank Richard Kenyon for the introduction to dimer theory and many patient discussions. The author also wants to thank Fabio Toninelli and Sanjay Ramassamy for giving helpful advice and pointing to references. Finally, the author thanks Kavita Ramanan and Vincent Lerouvillois for pointing out errors in the draft.

\bibliographystyle{icmcta4}
\bibliography{mybib}

\end{document}